\documentclass[a4paper,10pt,draft]{article}
\sloppy
\usepackage{amsfonts}
\usepackage{amsmath}
\usepackage{amssymb}
\usepackage{amsthm}
\usepackage[american]{babel}
\usepackage[T1]{fontenc}
\pagestyle{plain}
\textwidth 15 cm
\textheight 22.6 cm
\topmargin -1 cm
\oddsidemargin 0.5 cm
\evensidemargin 0.5 cm
\parindent=0pt
\baselineskip=15pt
\newtheorem{Theorem}{Theorem}[section]
\newtheorem{Definition}[Theorem]{Definition}
\newtheorem{Proposition}[Theorem]{Proposition}
\newtheorem{Lemma}[Theorem]{Lemma}

\newtheorem{Remark}[Theorem]{Remark}
\newtheorem{Example}[Theorem]{Example}
\newtheorem{Assumption}[Theorem]{Assumption}

\newcommand{\nd}{\stackrel{\textrm{def}}{=}}
\newcommand{\N}{{\mathbb N}}
\newcommand{\R}{{\mathbb R}}
\newcommand{\ud}{\mathrm{d}}
\newcommand{\eps}{\varepsilon}
\newcommand{\Lc}{{\cal L}}
\def\Swiech
{{\accent"13S}wie{\hbox{\kern -0.21em\lower
0.79ex\hbox{$\textfont1=\scriptfont1
\lhook$}}}ch}

\def\qedo{\hbox{\hskip 6pt\vrule width6pt height7pt
depth1pt  \hskip1pt}\bigskip}

\def\<{\left\langle }
\def\>{\right\rangle }
\newcommand{\myref}[1]{(\ref {#1})}

\def\cald{{\cal D}}

\def\calh{{\cal H}}

\author{
P. Acquistapace\footnote{Dipartimento di Matematica,
Universit\'a di Pisa, Italy, e-mail: paolo.acquistapace@unipi.it},
\; F.Gozzi\footnote {Dipartimento di Economia e Finanza,
Universit\`a \emph{LUISS - Guido Carli} Roma;
e-mail: fgozzi@luiss.it}}
\title{Minimum energy for linear systems with finite horizon:
\\ a non-standard Riccati equation}
\vspace{-1.5truecm}
\date{\today}
\begin{document}
\maketitle \vspace{-1truecm}
\begin{abstract}
This paper deals with a non-standard infinite dimensional
linear-quadratic control problem arising in the physics of non-stationary states
(see e.g. \cite{BDGJL4}): finding the minimum energy to drive a fixed stationary state $\bar x=0$ into an arbitrary non-stationary state $x$.
The Riccati Equation (RE) associated to this problem is not standard since
the sign of the linear part is opposite to the usual one, thus preventing the
use of the known theory.

Here we consider the finite horizon case.
We prove that the linear selfadjoint operator $P(t)$, associated to the
value function, solves the above mentioned RE (Theorem \ref{PsoldiR}).
Uniqueness does not hold in general but we are able to prove a partial uniqueness result in the class of invertible operators (Theorem \ref{unicitaRicc}).
In the special case
where the involved operators commute,
a more detailed analysis of the set of solutions is given (Theorems \ref{th:commuting1}, \ref{th:commuting2} and \ref{th:commuting3}).
Examples of applications are given.


\vskip 0.15cm

\textbf{Key words}: Minimum energy, Riccati equation,
infinite dimension, value function, Lyapunov equation, null controllability,
\vskip 0.15cm

\textbf{AMS classification}: 34G20, 47D06, 49J20, 49N10, 93B05, 93C05, 93C20.
\end{abstract}
\vspace{-0.5truecm}

\tableofcontents

\section{Introduction}\label{INTRO}

This paper is devoted to the study of a family of non-standard linear quadratic
finite horizon minimum energy problems in Hilbert spaces:
finding the minimum energy to drive a dynamical system from a fixed
equilibrium state $0$ (at time $t=t_0$) into an arbitrary
non-equilibrium state $x$ (at time $t=t_1$).
These problems arise (in particular when $t_0 \to -\infty$ and $t_1=0$) in the control representation of the rate function for a class of large deviation problems (see e.g. \cite{DaPratoPritchardZabczyk91} and the references quoted therein; see also \cite[Chapter 8]{FengKurtzbook} for an introduction to the subject); it is motivated by applications in the physics of non-equilibrium states and in this context it has been studied in various papers, see e.g. \cite{BDGJL1,BDGJL2,BDGJL3,BDGJL4,BDGJL5}.

In such papers the state equation is possibly nonlinear and the energy function
can be state dependent. One of the main goals, formulated e.g. in \cite{BDGJL4} in the infinite horizon case,
is then to show that the value function is the unique (or maximal/minimal) solution of the associated Hamilton-Jacobi-Bellman (HJB) equation.
Our goal is exactly this one. Due to the difficulty of the problem we restrict ourselves to study the linear quadratic case: hence solving the HJB equation reduces to solve a Riccati Equation (RE). In this paper, as a first step, we consider the finite horizon problem which we describe in the next subsection together with our main results.

\subsection{The problem and the main results}

To better clarify our results we state, roughly and informally, the mathematical problem
(see Subsection \ref{SSE:GENFORMUL} for a precise description).
The state space $X$ and the control space $U$ are both real separable Hilbert spaces.
We take the linear controlled system in $X$
\begin{equation}\label{storinf}
\left\{ \begin{array}{ll} y'(s)=Ay(s)+Bu(s), \quad s\in [0,t], \\[2mm]
y(0) = 0, \end{array}\right.
\end{equation}
where $A: D(A) \subset X \to X$ generates a strongly continuous semigroup
and $B:U\to X$ is a linear bounded operator.
Given a point $x \in X$ we consider the set ${\cal U}_{[0,t]}(0,x) $ of all control strategies $u(\cdot)$ that drive the system from the equilibrium state $0$
(at time $s=0$) into an arbitrary non-equilibrium state $x$ (at time
$s=t$). It is well known (see Subsection \ref{SSE:GENFORMUL}) that the set ${\cal U}_{[0,t]}(0,x) $ is nonempty if and only if $x\in H$, where $H$ is a suitable subspace of $X$ that can be endowed with its own Hilbert structure (see Subsection \ref{SSE:SPACEH}).

We want to minimize the ``energy-like'' cost functional
\begin{equation}\label{fzorinf}
J_{[0,t]}(u) =\frac12 \int_{0}^t \|u(s)\|^2_U\, \ud s.
\end{equation}
The value function $V(t,x)$ is defined as
\begin{equation}\label{eq:defV}
V(t,x) = \inf_{u\in {\cal U}_{[0,t]}(0,x)} J_{[0,t]}(u),
\end{equation}
and it is finite only when $x \in H$.
As the problem is linear quadratic, $V$ is a quadratic form in the variable $x$, i.e. $V(t,x)= \langle R(t)x, x\rangle_X$ for some symmetric operator-valued function $R(\cdot)$.
Hence we can consider the associated Riccati Equation (RE) in $X$ (with unknown $R(\cdot)$) which is, formally,
\begin{equation}\label{eq:algriccintroX}
\frac{d }{d t}\langle R(t)x,y\rangle_X=-\langle Ax, R(t)y\rangle_X - \langle R(t)x, Ay\rangle_X -\langle
B^*R(t) x, B^*R(t)y \rangle_U
\end{equation}
for every $x,y \in D(A)\cap D(R(t))$, with the initial condition $R(0)=+\infty$.
Since for each $t$ the operator $R(t)$ is unbounded (because $V(t,\cdot)$ is defined only in $H$), it is convenient to rewrite (\ref{eq:algriccintroX}) in $H$ so that the unknown $P(\cdot)$ becomes a bounded operator, see Subsection \ref{SS:VSOLVESRICCATI} for explanations.

Note that the sign of the linear part of (\ref{eq:algriccintroX})
(the first two terms of the right hand side)
is opposite to the usual one (see, e.g., for minimum energy problem in Hilbert spaces, \cite{DaPratoPritchardZabczyk91},
\cite{Emir89,Emir93}, \cite{GozziLoreti99}, \cite{PriolaZabczyk03}, \cite{Zabczyk92}).
This does not allow us to approach (\ref{eq:algriccintroX}) using the standard method (described e.g. in \cite[pp. 390-394 and 479-486]{BDDM07}, see also
\cite[p.1018]{PriolaZabczyk03}), which consists in
solving the RE using a fixed point theorem and a suitable {\it a priori} estimate. For forward RE like ours this is possible when the sign of the linear part is positive (in order to get a suitable semigroup generation property\footnote{More precisely in such case the linear part generates a semigroup (namely $P\mapsto e^{tA}Pe^{tA^*}$) which is not a group: such semigroup property is then lost when the sign changes.})
and the quadratic term is negative (in order to get the {\it a priori} estimate).


On the other hand the opposite sign of the linear part comes from the nature of the
motivating problem: to look at the minimum energy path from equilibrium to non-equilibrium states (see \cite{BDGJL4}), which is the opposite direction of the standard one considered e.g. in \cite{Curtain84,CurtainPritchard76,PriolaZabczyk03,Willems71},
(see also the books \cite{BDDM07,CurtainPritchard78,CurtainZwart95}).
This means that the value function depends on the final point, while
in the above quoted problems it depends on the initial one (see also Remark \ref{rm:timereversed} on this).
Therefore we are driven to use a different approach, that exploits
the structure of the problem; we partially borrow some ideas from \cite{PriolaZabczyk03}
and from\footnote{We thank prof. R. Vinter for providing us these references.} \cite{Moore81} and \cite{Scherpen93}. The main idea comes from the fact that the candidate solution of the RE (associated to the value function $V$ is the pseudoinverse of the unique solution of a Lyapunov (linear) equation (which is easier and is studied in Section \ref{SE:LYAPUNOV} providing an existence and uniqueness result in Proposition \ref{qtlyap}).

We list now our main results. We show, under a null controllability assumption, that the value function solves the associated Riccati Equation (RE) (Theorem \ref{PsoldiR}) and that a partial uniqueness holds (Theorem \ref{unicitaRicc}). When $A$ is selfadjoint and $A$ and $BB^*$ commute we can go deeper, finding more insights on the structure of the family of solutions (Theorems \ref{th:commuting1}, \ref{th:commuting2} and \ref{th:commuting3}).

\subsection{Plan of the paper}

Section \ref{sub:finitehorizon} is devoted to the presentation of our finite horizon minimum energy problem: after the description of our assumptions  (Subsection \ref{assumptions}) we provide the general formulation of the problem is in Subsection \ref{SSE:GENFORMUL}.

Section \ref{SE:LYAPUNOV} is devoted to the study of the associated Lyapunov equation, a key tool for the analysis of our RE. The main result of this section (Proposition \ref{qtlyap}) is more general than what we found in the literature and is then completely proved.

Section \ref{SE:RICCATI} is devoted to the analysis of the RE and to the presentation of the main results. It is divided in 5 subsections.
\begin{itemize}
\vspace{-0.2cm}
\item In the first (Subsection \ref{SSE:SPACEH}), we study the properties of the space $H$ which seems the good one where to study the RE.
\vspace{-0.2cm}
  \item Subsection \ref{SSE:VF} concerns the study the regularity properties of $V$.
\vspace{-0.2cm}
  \item In Subsection \ref{SS:VSOLVESRICCATI} we prove that $V$ solves the RE (Theorem \ref{PsoldiR}).
\vspace{-0.2cm}
  \item In Subsection \ref{SSE:PARTIALUNIQ} we present our partial uniqueness result (Theorem \ref{unicitaRicc}).
\vspace{-0.2cm}
  \item In Subsection \ref{SSE:COMMUTING} we refine our results in the special case of selfadjoint commuting operators.
\end{itemize}
Finally, Section \ref{examples} contains two significant examples.
At the end there is an Appendix divided in 4 parts. In the first three where we collect some preliminary results on pseudoinverses (\ref{SS:PSEUDOINVERSES}) on commuting operators (\ref{SSE:COMMOPER}), and
controllability operators (\ref{SSE:CONTROLLABOPERATORS}).
In the last one we collect the proofs of several lemmas and propositions (\ref{SSE:PROOFS}).

\section{Minimum energy problems}
\label{sub:finitehorizon}

\subsection{Assumptions}\label{assumptions}

Let $-\infty<s<t<+\infty$. Consider the abstract linear equation
\begin{equation}
\label{eq:state-fin-new} \left\{
\begin{array}{l}
y'(r)=Ay(r)+Bu(r), \quad r\in \, ]s,t] \\[2mm]
y(s) = z \in X,
\end{array}
\right.
\end{equation}
under the following assumption.

\begin{Assumption}\label{hp:main}
\begin{description}
\item[]
\item[(i)] $X$, the state space, and $U$, the control space, are real separable Hilbert spaces;
\item[(ii)] $A:D(A)\subseteq X \to X$ is the generator of a $C_0$-semigroup of negative type $-\omega$ in $X$ ($\omega>0$), i.e. there exists $M>0$ such that
\begin{equation}
\label{eq:AtipoMomega} \| e^{t A}\|_{{\cal L}(X)} \leq M e^{-\omega t}
\qquad \forall t\ge 0.
\end{equation}
\item[(iii)] $B\in {\cal L}(U,X)$, where ${\cal L}(U,X)$ is the space of bounded linear operators from $U$ to $X$;
\item[(iv)] $u$, the control strategy, belongs to $L^2(s,t;U)$.
\end{description}
\end{Assumption}

We recall the following well known result (see e.g. \cite[p. 106, Corollary 2.2 and Definition 2.3]{Pazy83}).

\begin{Proposition}
\label{prop:solcontinua} For $-\infty<s<t<+\infty$, $z\in X$ and
$u\in L^2(s,t;U)$, the mild solution of (\ref{eq:state-fin-new}),
defined by
\begin{equation}\label{vardcost}
y(r;s,z,u) = e^{(r-s)A}z + \int_s^r e^{(r-\sigma)A}Bu(\sigma) \, \ud \sigma, \quad
r\in [s,t],
\end{equation}
is in $C([s,t],X)$.
\end{Proposition}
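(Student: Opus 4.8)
The plan is to treat the two summands of the variation-of-constants formula \myref{vardcost} separately, writing $y(r;s,z,u) = e^{(r-s)A}z + w(r)$ where $w(r) \nd \int_s^r e^{(r-\sigma)A}Bu(\sigma)\,\ud\sigma$. The first summand is immediately continuous on $[s,t]$: the map $r \mapsto e^{(r-s)A}z$ is continuous by the very definition of a strongly continuous semigroup (Assumption \ref{hp:main}(ii)). Thus everything reduces to proving that the convolution term $w$ belongs to $C([s,t],X)$.

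First I would check that the integrand defining $w(r)$ is Bochner integrable. For $s \le \sigma \le r \le t$ the bound \myref{eq:AtipoMomega} gives $\|e^{(r-\sigma)A}\|_{{\cal L}(X)} \le M e^{-\omega(r-\sigma)} \le M$, so that, since $B \in {\cal L}(U,X)$,
\[
\|e^{(r-\sigma)A}Bu(\sigma)\|_X \le M\,\|B\|_{{\cal L}(U,X)}\,\|u(\sigma)\|_U .
\]
Because $u \in L^2(s,t;U)$ and the interval is bounded, $u \in L^1(s,t;U)$, so the right-hand side is integrable in $\sigma$; hence $w(r)$ is well defined for each $r \in [s,t]$.

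For the continuity, fix $r \le r'$ in $[s,t]$ and split the outer integral at $r$, writing
\[
w(r') - w(r) = \int_s^r \big[e^{(r'-\sigma)A}-e^{(r-\sigma)A}\big]Bu(\sigma)\,\ud\sigma + \int_r^{r'} e^{(r'-\sigma)A}Bu(\sigma)\,\ud\sigma .
\]
The last integral is controlled by $M\|B\|\int_r^{r'}\|u(\sigma)\|_U\,\ud\sigma$, which tends to $0$ as $r'-r \to 0$ by absolute continuity of the Lebesgue integral (or directly by Cauchy--Schwarz, being bounded by $M\|B\|(r'-r)^{1/2}\|u\|_{L^2(s,t;U)}$). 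For the first integral I would factor $e^{(r'-\sigma)A}-e^{(r-\sigma)A} = \big[e^{(r'-r)A}-I\big]e^{(r-\sigma)A}$ and pass to the limit under the integral sign: for each fixed $\sigma$ the integrand converges to $0$ as $r' \to r$ by strong continuity of the semigroup, while the uniform bound $\|[e^{(r'-r)A}-I]e^{(r-\sigma)A}Bu(\sigma)\|_X \le (M+1)M\|B\|\,\|u(\sigma)\|_U$ provides an $L^1$ dominating function independent of $r'$. Dominated convergence then yields that this term vanishes as well, and the symmetric estimate handles the case $r' \le r$.

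The only genuinely delicate point is the passage to the limit in the first integral: one must invoke strong (not uniform) continuity of the semigroup together with the uniform-in-$\sigma$ operator bound in order to apply the dominated convergence theorem for Bochner integrals. Everything else is routine estimation. Combining the two summands gives $y(\cdot;s,z,u) \in C([s,t],X)$, as claimed.
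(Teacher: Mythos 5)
Your argument is correct and complete: the paper does not prove this proposition itself but simply cites Pazy (Corollary 2.2, p.~106), and your decomposition into the semigroup term plus the convolution term, with the factorization $e^{(r'-\sigma)A}-e^{(r-\sigma)A}=[e^{(r'-r)A}-I]e^{(r-\sigma)A}$ and dominated convergence, is precisely the standard textbook proof being invoked there. The Cauchy--Schwarz bound on the tail integral and the $(M+1)M\|B\|\,\|u(\sigma)\|_U$ dominating function are exactly the right estimates, so nothing is missing.
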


In the sequel we will {\em always} assume that Assumption \ref{hp:main}
holds. Moreover, to prove most of the results of the paper we will also need
the assumption below. We state it now and we will say explicitly when we will use it. Before all we need to define the so-called controllability operator.

\begin{Definition}\label{df:Qt}
For $t\ge 0$ set
\begin{equation}\label{qt}
Q_t z = \int_0^t e^{rA} B B^* e^{rA^*}z \, \ud r, \qquad z\in X,
\end{equation}
and, for $t=+\infty$,
\begin{equation}\label{qinfty}
Q_\infty z = \lim_{t \to +\infty} Q_t z=
\int_0^{+\infty} e^{rA} B B^* e^{rA^*}z \, \ud r, \qquad z\in X.
\end{equation}
Note that $Q_\infty $ is well defined by Assumption \ref{hp:main}-(ii).
\end{Definition}

\begin{Assumption}\label{NC}
There exists $T_0>0$ such that\footnote{From now on we will denote by $R(F)$ the image of the operator $F$.}
\begin{equation}\label{eq:null-control}
R(e^{T_0A})\subseteq R(Q_{T_0}^{1/2}),
\end{equation}
\end{Assumption}
It is well known (see e.g. \cite[Appendix D]{DaPratoZabczyk14}) that this assumption is equivalent to assume null controllability at time $T_0$ for the system (\ref{eq:state-fin-new}) below: this means that for each $z\in X$ there exists a control $u\in L^2(0,T_0;U)$ such that the solution of (\ref{eq:state-fin-new}) with $[s,t]=[0,T_0]$ vanishes at time $T_0$.

\begin{Remark} \label{noexpdecay}
{\rm We have supposed in Assumption \ref{hp:main} that the semigroup $\{e^{tA}\}$ has negative type: this allows us to obtain more accurate results, also in view of a future study of the infinite horizon case. Anyway, if we only assume that $\|e^{tA}\|_{{\cal L}(X)}\le Me^{\gamma t}$ with $\gamma \ge 0$, most results of this paper are still true with suitable modifications. More specifically, since the operator $Q_\infty$ is not well defined, one has the following:
\begin{itemize}
\vspace{-0.3cm}
\item the space $H$ changes from $H=R(Q_\infty^{1/2})$ to $H=R(Q_T^{1/2})$, for suitable large $T>0$.
\vspace{-0.3cm}
\item Proposition \ref{valfun} modifies as follows:
\begin{itemize}
\vspace{-0.2cm}
\item (i) and (ii) hold in $\,[T_0,T]$;
\vspace{-0.1cm}
\item (iii)(a) holds in $\,[T_0,T]$;
\vspace{-0.1cm}
\item (iii)(b) holds in $\,]T_0,T]\times H$;
\vspace{-0.1cm}
\item (iii)(c) does not hold.
\end{itemize}
\vspace{-0.3cm}
\item The differential Riccati equation \eqref{RiccatiHprima} holds in $[T_0,T]$ with $Q_\infty$ replaced by $Q_T$.
\end{itemize}
Note that, in the commuting case, the proof of most results does not work as it is. This is the case for Proposition \ref{monotQt}(iii), Theorem \ref{th:commuting1} and Theorem \ref{th:commuting2}.
}\end{Remark}

\subsection{General formulation}
\label{SSE:GENFORMUL}

Suppose that Assumption \ref{hp:main} holds. Given a time interval $[s,t]\subset \R$, an initial state $z\in X$ and a control $u\in L^2(s,t;U)$ we consider the state equation \eqref{eq:state-fin-new}
and its mild solution $y(\cdot; s,x,u)$, given by \eqref{vardcost}.
We define the class of controls $u(\cdot)$ bringing the state $y(\cdot)$ from a
fixed $z\in X$ at time $s$ to a given target $x\in X$ at time $t$:
\begin{equation}
\label{eq:contr-x-x0} {\cal U}_{[s,t]}(z,x) \nd \left\{ u\in
L^2(s,t;U) \; : \; y(t;s,z,u)=x \right \}.
\end{equation}
We recall our cost functional, namely the energy:
\begin{equation}\label{eq:energyfunctional}
J_{[s,t]}(u) = \frac12 \int_s^t \|u(r)\|_U^2\, \ud r.
\end{equation}
The minimum energy problem at $(s,t;z,x)$ is the problem of
minimizing the functional $J_{[s,t]}(u)$ over all $u \in {\cal
U}_{[s,t]}(z,x)$. The value function of this control problem (the
{\em minimum energy}) is
\begin{equation} \label{eq:valuefunction-new} V_1(s,t;z,x)\nd
\inf_{u\in {\cal U}_{[s,t]}(z,x)} J_{[s,t]}(u).
\end{equation}
with the agreement that the infimum over the empty set is $+\infty$.
The following easy proposition, straightforward consequence of (\ref{vardcost}), allows to reduce the number of variables.
\begin{Proposition}\label{pr:V1lessvariables}
Under Assumption \ref{hp:main} we have
\begin{equation}\label{eq:spacontr-new}
\begin{array}{lcl} u(\cdot)\in {\cal U}_{[s,t]}(z,x)& \iff &
u(\cdot +t)\in  {\cal U}_{[s-t,0]}(0,x-e^{(t-s)A}z) \\[2mm]
& \iff & u(\cdot+s)\in  {\cal U}_{[0,t-s]}(0,x-e^{(t-s)A}z).\end{array}
\end{equation}
and then \\[2mm]
\rule{20mm}{0mm} $V_1(s,t;z,x)=V_1(s-t,0;0,x-e^{(t-s)A}z) =V_1(0,t-s;0,x-e^{(t-s)A}z)$. \hfill \qedo
\end{Proposition}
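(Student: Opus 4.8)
The plan is to deduce both the equivalence of the three control classes and the identity for $V_1$ directly from the variation-of-constants formula \eqref{vardcost}, exploiting two structural features of the problem: the terminal constraint defining ${\cal U}_{[s,t]}(z,x)$ enters the control only through a convolution integral that is invariant under time translation, and the cost \eqref{eq:energyfunctional} is a time-translation-invariant $L^2$ norm. So the whole statement reduces to a careful change of variables.

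First I would unwind the membership condition. By \eqref{vardcost}, the relation $u\in {\cal U}_{[s,t]}(z,x)$ means $y(t;s,z,u)=x$, which, after isolating the free term, reads
$$\int_s^t e^{(t-\sigma)A}Bu(\sigma)\,\ud\sigma = x-e^{(t-s)A}z.$$
Since the right-hand side is exactly the target appearing in the two shifted control classes, it suffices to check that each shifted control reproduces this same integral. For the first equivalence I set $v(\cdot)=u(\cdot+t)$ on $[s-t,0]$; evaluating $y(0;s-t,0,v)$ via \eqref{vardcost} gives $\int_{s-t}^0 e^{-\sigma A}Bv(\sigma)\,\ud\sigma$, and the substitution $\tau=\sigma+t$ turns $e^{-\sigma A}$ into $e^{(t-\tau)A}$, recovering the convolution above. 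For the second I set $w(\cdot)=u(\cdot+s)$ on $[0,t-s]$; evaluating $y(t-s;0,0,w)$ gives $\int_0^{t-s}e^{(t-s-\sigma)A}Bw(\sigma)\,\ud\sigma$, and $\tau=\sigma+s$ collapses the exponent to $(t-\tau)A$ as well. As each shift is a bijection of $L^2$ spaces between the corresponding intervals, the three conditions are equivalent.

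The value-function identity then follows because the same shifts preserve the energy: the change of variable $r\mapsto r+t$ gives $J_{[s-t,0]}(v)=\tfrac12\int_{s-t}^0\|u(r+t)\|_U^2\,\ud r=J_{[s,t]}(u)$, and likewise $J_{[0,t-s]}(w)=J_{[s,t]}(u)$. Thus each shift is a cost-preserving bijection between the admissible sets, so the three infima in \eqref{eq:valuefunction-new} coincide, yielding $V_1(s,t;z,x)=V_1(s-t,0;0,x-e^{(t-s)A}z)=V_1(0,t-s;0,x-e^{(t-s)A}z)$, with the convention that both sides are $+\infty$ when the classes are empty.

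I do not expect a genuine obstacle here, as the statement is essentially bookkeeping. The only point demanding attention is tracking the semigroup exponents through the substitutions so that $e^{-\sigma A}$ and $e^{(t-s-\sigma)A}$ both reduce to the common kernel $e^{(t-\tau)A}$; a sign error at that step would be the single way to break the argument.
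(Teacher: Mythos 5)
Your argument is correct and is precisely the intended one: the paper states this proposition without proof, calling it a straightforward consequence of the variation-of-constants formula \eqref{vardcost}, and your change-of-variables computation (checking that both shifts reduce the semigroup kernel to $e^{(t-\tau)A}$ and preserve the $L^2$ cost) is exactly the bookkeeping being elided. The handling of the empty-set convention for the infimum is also the right finishing touch.
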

From now on we will set, for simplicity of notation,
\begin{equation}\label{eq:defV0-new}
V(t,x) := V_1(0,t;0,x) = \inf_{u\in {\cal U}_{[0,t]}(0,x)}
J_{[0,t]}(u) \qquad \forall t\in \,]0,+\infty[\,,\quad \forall x\in X.
\end{equation}
Now we look at the set where $V$ is finite: this is the {\em reachable set} in the interval
$[0,t]$, starting from $0$, defined as
\begin{equation}\label{eq:ptiragg-new}
{\cal R}_{[0,t]}^0:= \left\{ x \in X:\ {\cal U}_{[0,t]}(0,x) \neq \emptyset \right\}.
\end{equation}
Defining the operator
$${\cal L}_{t}:L^2(0,t;U) \to X, \qquad {\cal L}_{t}u=\int_0^{t}
e^{(t-\tau)A} B u(\tau)\, \ud \tau,$$
it is clear that
\begin{equation}\label{eq:ptiragg-newbis}
{\cal R}_{[0,t]}^0:= {\cal L}_{t}\left(L^2(0,t;U)\right),
\end{equation}
hence the set where $V$ is finite is $R\left( {\cal L}_{t}\right)$.


We now recall a fundamental, and well known, result, which establishes the relationship between the family of operators $\{Q_t, \;t\in [0,+\infty]\}$ and our minimum energy problem (see e.g. \cite[Theorem 2.3, p.210]{Zabczyk92}).
\begin{Theorem}\label{th:th23Zab}
Suppose that Assumption \ref{hp:main} holds and let $x\in X$.
\begin{itemize}
\item[(i)] The set ${\cal U}_{[0,t]}(0,x)$ is nonempty if and only if $x \in R({Q}^{1/2}_{t})$. In particular we have
\begin{equation}
\label{eq:ptiragg-Q12} {\cal L}_{t}\left(L^2(0,t;U)\right)={\cal
R}^0_{[0,t]}=R(Q_{t}^{1/2}) \qquad \forall t\ge 0\,.
\end{equation}
\item[(ii)] If $x \in R({Q}^{1/2}_{t})$, there is exactly one minimizing strategy
$\hat u_{t,x}$ for the functional $J_{[0,t]}$ over ${\cal U}_{[0,t]}(0,x)$, and moreover
\begin{equation}\label{valuef}
V(t,x) =J_{[0,t]}(\hat u_{t,x}) =\frac12 \|Q^{-1/2}_t x\|_X^2,
\end{equation}
where, for $t>0$, ${ Q}^{-1/2}_t: R(Q_t^{1/2})\to [\ker { Q}_t^{1/2}]^\perp $ is the pseudoinverse of $Q^{1/2}_t$.
\item[(iii)] If $x \in R(Q_t)$ then $V(t,x) =\frac12 \langle Q^{-1}_t x,x \rangle_X$, where $Q_t^{-1}: R({Q}_t)\to [\ker {Q}_t]^\perp$ is the pseudoinverse of $Q_t$.
\end{itemize}
\end{Theorem}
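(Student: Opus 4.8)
The plan is to reduce all three statements to one algebraic identity relating the input-to-state map ${\cal L}_t$ to the controllability operator $Q_t$, and then to quote general Hilbert-space facts about ranges and minimum-norm solutions. First I would compute the adjoint ${\cal L}_t^*:X\to L^2(0,t;U)$. Pairing $\langle {\cal L}_t u,z\rangle_X$ and integrating by parts in the definition of ${\cal L}_t$ gives $({\cal L}_t^* z)(\tau)=B^* e^{(t-\tau)A^*}z$, and hence, after the substitution $r=t-\tau$,
\[
{\cal L}_t {\cal L}_t^* z=\int_0^t e^{(t-\tau)A}BB^* e^{(t-\tau)A^*}z\,\ud\tau=\int_0^t e^{rA}BB^* e^{rA^*}z\,\ud r=Q_t z .
\]
Thus $Q_t={\cal L}_t{\cal L}_t^*$, and in particular $\|{\cal L}_t^* z\|_{L^2}^2=\langle Q_t z,z\rangle_X=\|Q_t^{1/2}z\|_X^2$ for every $z\in X$. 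This single identity is the engine of the whole proof.

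For (i), recall that by \eqref{eq:ptiragg-newbis} the reachable set ${\cal R}^0_{[0,t]}$ equals $R({\cal L}_t)$, and that ${\cal U}_{[0,t]}(0,x)\neq\emptyset$ precisely when $x\in{\cal R}^0_{[0,t]}$; so it suffices to prove $R({\cal L}_t)=R(Q_t^{1/2})$. This is exactly the Douglas range-inclusion/factorization principle: the equality $\|{\cal L}_t^* z\|=\|Q_t^{1/2}z\|$ forces $R({\cal L}_t)=R(({\cal L}_t{\cal L}_t^*)^{1/2})=R(Q_t^{1/2})$. I would invoke the pseudoinverse machinery collected in Appendix \ref{SS:PSEUDOINVERSES} to make this rigorous.

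For (ii)--(iii) the task is a minimum-norm problem: minimize $\tfrac12\|u\|^2$ over the closed affine set $\{u:{\cal L}_t u=x\}$. Since this set is the translate of the closed subspace $\ker{\cal L}_t$, the projection of the origin onto it exists and is unique, so there is exactly one minimizer, namely the one lying in $(\ker{\cal L}_t)^\perp=\overline{R({\cal L}_t^*)}$, i.e. $\hat u_{t,x}={\cal L}_t^{\dagger}x$. To evaluate the minimum I would use the norm identity above to define an isometry $V:\overline{R(Q_t^{1/2})}\to\overline{R({\cal L}_t^*)}$ through $Q_t^{1/2}z\mapsto{\cal L}_t^* z$ (well defined because $Q_t^{1/2}z=0$ implies ${\cal L}_t^* z=0$); this produces the factorization ${\cal L}_t=Q_t^{1/2}V^*$. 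Writing $\hat u_{t,x}=Vw$ with $w\in\overline{R(Q_t^{1/2})}$ and applying ${\cal L}_t$ gives $Q_t^{1/2}w=x$, so $w=Q_t^{-1/2}x$ and $\|\hat u_{t,x}\|=\|w\|=\|Q_t^{-1/2}x\|$, which is \eqref{valuef}. Part (iii) then follows by specializing to $x\in R(Q_t)$: writing $x=Q_t y$ yields $\|Q_t^{-1/2}x\|^2=\langle Q_t y,y\rangle=\langle Q_t^{-1}x,x\rangle$.

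The main obstacle is infinite-dimensionality: the ranges $R({\cal L}_t)$ and $R(Q_t^{1/2})$ need not be closed, so $Q_t^{-1/2}$ and $Q_t^{-1}$ are genuinely unbounded operators, densely defined only on their respective ranges. Consequently the delicate points are justifying the extension of $V$ to an isometry on $\overline{R(Q_t^{1/2})}$, verifying $V^*V=I$ on that closure, and interpreting every pseudoinverse as the value in $[\ker\,\cdot\,]^\perp$ so that the chain of equalities above is consistent. These are precisely the technical facts that the preliminary material on pseudoinverses (Appendix \ref{SS:PSEUDOINVERSES}) and on the controllability operators (Appendix \ref{SSE:CONTROLLABOPERATORS}) is designed to supply.
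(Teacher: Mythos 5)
Your argument is correct and is essentially the classical proof of this result, which the paper does not reprove but quotes from \cite[Theorem 2.3, p.210]{Zabczyk92}: the whole content is the factorization $Q_t={\cal L}_t{\cal L}_t^*$, the range-comparison principle (stated in the paper as Proposition \ref{Pr:B.1}, which gives $R({\cal L}_t)=R(Q_t^{1/2})$ and the norm identity for the pseudoinverses from $\|{\cal L}_t^*z\|=\|Q_t^{1/2}z\|$), and the projection characterization of the minimum-norm control in $(\ker{\cal L}_t)^\perp$. All the delicate points you flag (non-closed ranges, interpreting pseudoinverses as values in $[\ker\,\cdot\,]^\perp$) are correctly identified and are exactly what Appendix \ref{SS:PSEUDOINVERSES} supplies, so no gap remains.
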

Since $V$ is quadratic the HJB equation associated to our problem becomes a differential Riccati Equation, namely \eqref{eq:algriccintroX}. Our main aim is then to prove that the linear symmetric operator $R$ associated to $V$ is a solution of such Riccati Equation and prove a kind of uniqueness result. We will do this in Section \ref{SE:RICCATI}.

\begin{Remark}\label{rm:infinitehorizonbefore}
{\em It is possible to extend the above minimum energy problem to the case when
$s=-\infty$ or when $t=+\infty$. The energy functional becomes then an integral over a half line. In the first case we have to take the initial datum $z=0$ and, properly defining the mild solutions in the left half-line (requiring that \myref{vardcost} is satisfied for all $r\ge s =-\infty$),
we have to define the set of control strategies as follows:
\begin{equation*}
{\cal U}_{[-\infty,t]}(0,x) \nd \left\{ u\in
L^2(-\infty,t;U) \; : \; y(t;-\infty,0,u)=x \right \}.
\end{equation*}

In the second case the problem is trivial. Indeed formally one should define
\begin{equation*}
{\cal U}_{[s,+\infty]}(z,x) \nd \left\{ u\in
L^2(s,+\infty;U) \; : \; \lim_{t\to + \infty} y(t;s,z,u)=x \right \}.
\end{equation*}
However it is easy to show that for every $u\in L^2(s,+\infty;U)$ we have
$\lim_{t\to + \infty} y(t;s,z,u)=0$, so that the class ${\cal U}_{[s,+\infty]}(z,x)$
is empty unless $x=0$; in this case the optimal control strategy is clearly $u \equiv 0$.

In a subsequent paper we will study the infinite horizon problem when the
starting time is $- \infty$ and the arrival time is $0$: the value
function of this problem is formally $V_1(-\infty,0;0,x)=V(+\infty, x)$.
Some results about it will be also given in the present paper.
For simplicity we will use the notation
$$
V_\infty (x):=V_1(-\infty,0;0,x).
$$
In Proposition \ref{valfun} we will prove that, under Assumption \ref{hp:main}, we have
$$V_\infty(x) = \lim_{t\to +\infty} V(t,x)=\inf_{t>0} V(t,x).\hfill\qedo$$}
\end{Remark}

\section{The Lyapunov equation}
\label{SE:LYAPUNOV}

We want now to show that the function $t\to Q_t$, from $[0,+\infty)$ to $\Lc(X)$, solves
a suitable Lyapunov equation. To this purpose we prove first the following lemma.

\begin{Lemma}\label{proreg}
\begin{itemize}
\item[]
\item[{\bf (i)}]
If $x\in D(A^*)$, then
for every $t \in [0,+\infty]$ we have $x \in D(AQ_t)$ and
\begin{equation}\label{Lya1}
AQ_t x = e^{tA}BB^* e^{tA^*}x - BB^*x - Q_tA^*x \qquad \forall x\in D(A^*) \quad \forall t \in[0,+\infty[\,,
\end{equation}
\begin{equation}\label{Lya1bis}
AQ_\infty x =  - BB^*x - Q_\infty A^*x \qquad \forall x\in D(A^*).
\end{equation}
\item[{\bf (ii)}]
For every $t\in [0,+\infty]$ we have $D(A^*) \subseteq D((AQ_t)^*)\subseteq D(AQ_t)$, they all are dense in $X$, and
\begin{equation}\label{Lya}
(AQ_t) x = e^{tA}BB^* e^{tA^*}x - (AQ_t)^*x - BB^*x \quad \forall x\in D((AQ_t)^*) \quad \forall t \in[0,+\infty[\,.
\end{equation}
\begin{equation}\label{Lyainfty}
(AQ_\infty) x = - (AQ_\infty)^*x - BB^*x \quad \forall x\in D((AQ_\infty)^*).
\end{equation}
\item[{\bf (iii)}]
For  every $t\in \,]0,+\infty]$,
if $x\in Q_t(D((AQ_t)^*))$, then $Ax \in [\ker Q_t]^\perp$.
\item[{\bf (iv)}]
For  every $t\in \,]0,+\infty]$, $Q_t(D(A^*))$ is dense in $[\ker Q_t]^\perp$.
Hence if $x\in D(A) \cap [\ker Q_t]^\perp$, then $Ax \in [\ker Q_t]^\perp$.
\end{itemize}
\end{Lemma}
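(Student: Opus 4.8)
The plan is to prove the four items in the order (i) $\Rightarrow$ (ii) $\Rightarrow$ (iii), (iv), each using the previous ones; the one structural fact I use repeatedly is that $A$, being a generator, is closed and densely defined, so that $(A^*)^*=A$. For (i), fix $x,y\in D(A^*)$ and differentiate $g(r):=\langle e^{rA}BB^* e^{rA^*}x,y\rangle=\langle BB^* e^{rA^*}x,e^{rA^*}y\rangle$. Since $x,y\in D(A^*)$ this is $C^1$, with $g'(r)=\langle e^{rA}BB^* e^{rA^*}A^*x,y\rangle+\langle e^{rA}BB^* e^{rA^*}x,A^*y\rangle$; integrating over $[0,t]$ and recognizing $\int_0^t e^{rA}BB^* e^{rA^*}\cdot\,\ud r=Q_t$ from \eqref{qt} gives, for all $x,y\in D(A^*)$,
\[\langle Q_tx,A^*y\rangle=\langle e^{tA}BB^* e^{tA^*}x-BB^*x-Q_tA^*x,\,y\rangle.\]
Thus $y\mapsto\langle Q_tx,A^*y\rangle$ is bounded on $D(A^*)$, so $Q_tx\in D((A^*)^*)=D(A)$ and $AQ_tx$ equals the vector in brackets, i.e.\ \eqref{Lya1}. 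Letting $t\to+\infty$, the boundary term vanishes by \eqref{eq:AtipoMomega} while $Q_t\to Q_\infty$ strongly and $Q_tA^*x\to Q_\infty A^*x$, so the closedness of $A$ yields \eqref{Lya1bis}.

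For (ii), note first that $AQ_t$ is closed (it is $A$ composed with the bounded $Q_t$) and, by (i), densely defined, since $D(A^*)\subseteq D(AQ_t)$. If $x\in D(A^*)$ then for every $z\in D(AQ_t)$ selfadjointness of $Q_t$ gives $\langle AQ_tz,x\rangle=\langle Q_tz,A^*x\rangle=\langle z,Q_tA^*x\rangle$, so $x\in D((AQ_t)^*)$ with $(AQ_t)^*x=Q_tA^*x$; hence $D(A^*)\subseteq D((AQ_t)^*)$. For the reverse inclusion and the identity \eqref{Lya}, let $x\in D((AQ_t)^*)$ and $w:=(AQ_t)^*x$. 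Testing $\langle AQ_tz,x\rangle=\langle z,w\rangle$ only against $z\in D(A^*)\subseteq D(AQ_t)$, substituting \eqref{Lya1} for $AQ_tz$, and using that $Q_t$ and $e^{tA}BB^* e^{tA^*}-BB^*$ are selfadjoint, I obtain
\[\langle A^*z,Q_tx\rangle=\langle z,\,(e^{tA}BB^* e^{tA^*}-BB^*)x-w\rangle\qquad\forall z\in D(A^*),\]
and once more $(A^*)^*=A$ forces $Q_tx\in D(A)$ with $AQ_tx=(e^{tA}BB^* e^{tA^*}-BB^*)x-(AQ_t)^*x$, which is exactly \eqref{Lya}. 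The same computation with \eqref{Lya1bis} gives \eqref{Lyainfty}, and density of the three nested domains is clear since $D(A^*)$ is dense and contained in all of them.

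Item (iii) is then a direct computation: if $x=Q_tv$ with $v\in D((AQ_t)^*)$ then $v\in D(AQ_t)$ by (ii), so $x\in D(A)$ and $Ax=AQ_tv=(e^{tA}BB^* e^{tA^*}-BB^*)v-(AQ_t)^*v$. For $\eta\in\ker Q_t$ the identity $\langle Q_t\eta,\eta\rangle=\int_0^t\|B^* e^{rA^*}\eta\|_U^2\,\ud r=0$ yields $B^* e^{rA^*}\eta=0$ on $[0,t]$, so $BB^*\eta=0$ and $e^{tA}BB^* e^{tA^*}\eta=0$, while $\eta\in D(AQ_t)$ with $AQ_t\eta=0$ gives $\langle(AQ_t)^*v,\eta\rangle=\langle v,AQ_t\eta\rangle=0$. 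Hence $\langle Ax,\eta\rangle=0$, i.e.\ $Ax\in[\ker Q_t]^\perp$ (for $t=+\infty$ use $B^* e^{rA^*}\eta=0$ for all $r\ge0$ together with \eqref{Lyainfty}). The density statement in (iv) is immediate: $Q_t$ is bounded and $D(A^*)$ is dense, so $\overline{Q_t(D(A^*))}=\overline{R(Q_t)}=[\ker Q_t]^\perp$.

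It remains to prove the invariance claim $A\big(D(A)\cap[\ker Q_t]^\perp\big)\subseteq[\ker Q_t]^\perp$, which is equivalent to $e^{sA^*}$-invariance of $\ker Q_t$. For $t=+\infty$ this is painless: $\eta\in\ker Q_\infty$ means $B^* e^{rA^*}\eta=0$ for all $r\ge0$, whence $B^* e^{rA^*}e^{sA^*}\eta=B^* e^{(r+s)A^*}\eta=0$ for all $r\ge0$, so $e^{sA^*}\eta\in\ker Q_\infty$; then for $x\in D(A)\cap[\ker Q_\infty]^\perp$ and $\eta\in\ker Q_\infty$ one has $\langle Ax,\eta\rangle=\lim_{h\to0^+}\frac1h\langle x,(e^{hA^*}-I)\eta\rangle=0$, since each $e^{hA^*}\eta\in\ker Q_\infty$ is orthogonal to $x$. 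The step I expect to be the \emph{main obstacle} is precisely the finite-horizon version: for finite $t$ the relation $B^* e^{rA^*}\eta=0$ is known only on $[0,t]$, so $\ker Q_t$ need not be $e^{sA^*}$-invariant and this differentiation argument breaks down. I would therefore deduce the finite-$t$ claim from (iii) together with the density just established: the functional $x\mapsto\langle Ax,\eta\rangle$ is continuous for the graph norm of $A$ and vanishes on $Q_t(D(A^*))\subseteq D(A)\cap[\ker Q_t]^\perp$ by (iii), so it suffices to show that $Q_t(D(A^*))$ is a \emph{core}, that is, graph-dense (not merely norm-dense) in $D(A)\cap[\ker Q_t]^\perp$. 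Establishing this graph-density is the crux, and I would attack it by a limiting argument controlling the failure of invariance over the final instants of the horizon through the explicit boundary term $e^{tA}BB^* e^{tA^*}$ in \eqref{Lya}.
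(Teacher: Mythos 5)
Your treatment of (i), (ii), (iii) and the density statement in (iv) is correct and essentially the paper's own argument; the only cosmetic difference is in (i), where you obtain (\ref{Lya1}) by testing against $y\in D(A^*)$ and invoking $(A^*)^*=A$, while the paper integrates by parts using $A^{-1}$ (available because the semigroup has negative type, so $0\in\rho(A)$) --- both routes are fine, as is your limiting argument for (\ref{Lya1bis}) via closedness of $A$. Your proof of the last claim of (iv) in the case $t=+\infty$, through the $e^{sA^*}$-invariance of $\ker Q_\infty$ and the difference quotient $\frac1h\langle x,(e^{hA^*}-I)\eta\rangle_X$, is complete and in fact tighter than the paper's, which passes to the limit in $\langle AQ_\infty z_k,w\rangle_X$ knowing only that $Q_\infty z_k\to x$ in the norm of $X$.

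The genuine gap is the one you flag yourself: the final claim of (iv) for finite $t$. Your diagnosis of the obstruction is accurate --- $e^{sA^*}$ carries $\ker Q_t$ only into the larger space $\ker Q_{t-s}$, so the invariance argument breaks, and combining (iii) with the norm-density of $Q_t(D(A^*))$ in $[\ker Q_t]^\perp$ gives $\langle Ax,w\rangle_X=0$ only on a set that is not known to be dense for the graph norm of $A$. But announcing that you ``would attack'' the core property is not a proof: as submitted, the statement $A\bigl(D(A)\cap[\ker Q_t]^\perp\bigr)\subseteq[\ker Q_t]^\perp$ for $t<+\infty$ is unproven. (For what it is worth, the paper dismisses this case as ``quite similar'' to $t=+\infty$, and its own limiting step has exactly the weak-convergence issue you identify, so the difficulty is real; nonetheless your proposal must either establish that $Q_t(D(A^*))$ is graph-dense in $D(A)\cap[\ker Q_t]^\perp$, or find another route, before (iv) can be considered proved for finite $t$.)
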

\begin{proof} {\bf (i)} Let $x\in D(A^*)$. Then we can write, integrating by parts:
\begin{eqnarray*}
Q_tx & = & \int_0^t e^{rA}BB^*e^{rA^*}x\,dr \\
& = & A^{-1} \left[ e^{rA}BB^* e^{rA^*}x \right]_0^t - A^{-1}\int_0^t e^{rA}BB^*e^{rA^*}A^*x\,dr \\
& = & A^{-1} \left[e^{tA}BB^* e^{tA^*}x - BB^*x - Q_t A^*x\right],
\end{eqnarray*}
and (i) follows.\\[1mm]
{\bf (ii)} The first inclusion follows from the very definition of the adjoint.


Next, if $x\in D((AQ_t)^*)$ we can write for each $y\in D(A^*)$, by (\ref{Lya1}),
\begin{eqnarray*}
|\langle Q_tx,A^*y\rangle_X| & = & |\langle x,Q_tA^*y\rangle_X| \\
& = & |\langle x, -AQ_ty + e^{tA}BB^* e^{tA^*}y - BB^*y\rangle_X| \\
& = & |\langle -(AQ_t)^*x + e^{tA}BB^* e^{tA^*}x - BB^*x, y\rangle_X| \le c_t \|y\|_X\,,
\end{eqnarray*}
so that $Q_tx\in D(A)$, i.e. $x\in D(AQ_t)$, and (\ref{Lya}) holds. This proves the claim
for $t\in[0,+\infty[$. For the case $t=+\infty$ we argue in a similar way: let $x\in D((AQ_\infty)^*)$; then for each $y\in D(A^*)$, by (\ref{Lya1bis}) we have,
\begin{eqnarray*}
|\langle Q_\infty x,A^*y\rangle_X| & = & |\langle x,Q_\infty A^*y\rangle_X| \\
& = & |\langle x, -AQ_\infty y - BB^*y\rangle_X| \\
& = & |\langle -(AQ_\infty )^*x - BB^*x, y\rangle_X| \le c_t \|y\|_X\,,
\end{eqnarray*}
so that $Q_\infty x\in D(A)$, i.e. $x\in D(AQ_\infty )$, and (\ref{Lyainfty}) holds.
\\[1mm]
{\bf (iii)}
By assumption we have $x=Q_t z$ with $z\in D((AQ_t)^*)$. Let $w\in \ker Q_t$.
By Proposition \ref{pr:Qt} and its proof we get $B^*e^{sA^*}w=0$ for all $s\in [0,t]$. Moreover $w$ belongs obviously to $D(AQ_t)$ (with $AQ_t w =A\,0 =0$).
Hence we have by \myref{Lya}, when $t<+\infty$
$$\langle Ax,w\rangle_X = \langle AQ_t z, w\rangle_X =
\langle B^*e^{tA^*}z,B^*e^{tA^*}w\rangle_X
-\langle B^*z,B^*w\rangle_X - \langle z,AQ_\infty w\rangle_X = 0.$$
As a consequence, $Ax\in [\ker Q_\infty]^\perp$.
Similarly, if $t=+\infty$, we have for every $w\in \ker Q_\infty$, by \myref{Lyainfty},
$$\langle Ax,w\rangle_X = \langle AQ_\infty z, w\rangle_X = -\langle B^*z,B^*w\rangle_X - \langle z,AQ_\infty w\rangle_X = 0,$$
and the claim follows.
\\[1mm]
{\bf (iv)}
We just consider the case $t=\infty$, since the case $0<t <\infty$ is quite similar.
Fix $x\in [\ker Q_\infty]^\perp$. As $[\ker Q_\infty]^\perp = \overline{R(Q_\infty)}$
there is a sequence of elements $x_n\in R(Q_\infty)$ such that $x_n \to x$. Hence
there exists $\{z_n\}\subset X$ such that $Q_\infty z_n \to x$ in $X$. Since $D(A^*)$ is dense in $X$, for each $n\in \N^+$ we can find $y_n \in D(A^*)$ such that $\|y_n-z_n\|_X<1/n$, so that $Q_\infty y_n \to x$ in $X$, too.

For the last statement, observe first that, since $D(A^*) \subseteq D((AQ_\infty)^*)$, then
$Q_\infty(D(A^*)) \subseteq Q_\infty(D((AQ_\infty)^*))$, so the latter is dense in $[\ker Q_\infty]^\perp$, too.
Hence, for $x\in D(A) \cap [\ker Q_\infty]^\perp$, let $\{z_k\}_{k\in \N}\subset D((AQ_\infty)^*)$ be a sequence such that
$Q_\infty z_k \to x$ in $X$ as $k\to \infty$. Thus we can write for $w\in \ker Q_\infty$
\begin{eqnarray*}
\langle Ax,w\rangle_X & = & \lim_{k\to \infty} \langle AQ_\infty z_k,w\rangle_X = \lim_{k\to \infty} \langle -(AQ_\infty)^*z_k-BB^*z_k,w\rangle_X \\[2mm]
& = & \lim_{k\to \infty}  -\langle z_k,AQ_\infty w\rangle_X - \langle B^*z_k,B^*w\rangle_X= \lim_{k\to \infty} 0 = 0.
\end{eqnarray*}
\end{proof}


\begin{Definition}\label{defqtlyap}
A map $Q(\cdot):[0,+\infty) \to \mathcal{L}(X)$ is a solution of
the differential Lyapunov equation
\begin{equation}\label{eq:Lyapdiff}
\left\{ \begin{array}{l} Q'(t) = AQ(t) + Q(t)A^* + BB^*, \quad t>0,\\
Q(0)=0, \end{array} \right.
\vspace{-0.2cm}
\end{equation}
\vspace{-0.2cm}
if:
\begin{itemize}
  \item for each $t\ge 0$ the operator $Q(t)$ is positive and selfadjoint
  and $Q(0)=0$;
\vspace{-0.2cm}
  \item for each $t\ge 0$ and $x \in D(A^*)$ we have $Q_t x \in D(A)$;
\vspace{-0.2cm}
  \item for each $x \in D(A^*)$ the map $t\to Q(t)x$ is differentiable and
$$\frac{\ud}{\ud t}Q_tx = AQ_tx + Q_tA^*x + BB^*x \qquad \forall t>0.$$
\end{itemize}
Similarly an operator $Q\in \mathcal{L}(X)$ is a solution of
the algebraic Lyapunov equation
\begin{equation}\label{eq:LyapunovX}
 AQ + QA^* + BB^*=0
\vspace{-0.2cm}
\end{equation}
if:
\begin{itemize}
\vspace{-0.2cm}
  \item $Q$ is positive and selfadjoint;
\vspace{-0.2cm}
  \item for each $x \in D(A^*)$ we have $Q x \in D(A)$;
\vspace{-0.2cm}
  \item for each $x \in D(A^*)$
$$ AQx + QA^*x + BB^*x=0.$$
\end{itemize}
\end{Definition}

\begin{Proposition}
\label{qtlyap}
The operator $Q_t$ defined by (\ref{qt}) is
a solution of the differential Lyapunov equation (\ref{eq:Lyapdiff}).
Similarly the operator $Q_\infty$ solves the algebraic Lyapunov equation
(\ref{eq:LyapunovX}).

Moreover, for all $t\ge 0$, let $Q(t)$ be positive and selfadjoint, and such that it solves the Lyapunov equation \myref{eq:Lyapdiff} in weak sense, i.e. $Q(0)=0$,
the map $t\mapsto \<Q(t) x,y\>_X$ is differentiable for every $x,y \in D(A^*)$ and
$$\frac{\ud}{\ud t}\<Q(t)x,y\>_X = \<Q(t)x,A^*y\>_X +\< A^*x, Q(t)y\>_X + \<B^*x,B^*y\>_U \qquad \forall t>0.$$
Then $Q(t)=Q_t$ for all $t\ge 0$.

Similarly let $Q$ be  positive and selfadjoint, and such that it solves
the Lyapunov equation \myref{eq:LyapunovX} in weak sense, i.e.
for all $x,y \in D(A^*)$
$$ \<Qx,A^*y\>_X +\< A^*x, Qy\>_X + \<B^*x,B^*y\>_U =0 .$$
Then $Q=Q_\infty$.
\end{Proposition}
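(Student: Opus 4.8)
The plan is to establish existence first and then the two uniqueness statements, handling the finite-horizon (differential) and the $t=+\infty$ (algebraic) cases in parallel, since they share the same mechanism.

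For existence, positivity and selfadjointness of $Q_t$ are read off directly from \myref{qt}: for $z\in X$ one has $\<Q_t z,z\>_X=\int_0^t\|B^*e^{rA^*}z\|_U^2\,\ud r\ge 0$, and each integrand $e^{rA}BB^*e^{rA^*}$ is selfadjoint. The inclusion $Q_t(D(A^*))\subseteq D(A)$ is exactly Lemma \ref{proreg}(i). To get the equation, I differentiate \myref{qt} to obtain, for $x\in D(A^*)$, $\tfrac{\ud}{\ud t}Q_tx=e^{tA}BB^*e^{tA^*}x$, and then substitute the right-hand side of \myref{Lya1}, which turns this expression into $AQ_tx+Q_tA^*x+BB^*x$; this is \myref{eq:Lyapdiff}. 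The algebraic case \myref{eq:LyapunovX} follows in the same way from \myref{Lya1bis}.

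The substantive content is uniqueness, where I would use a sandwiching argument. Let $Q(\cdot)$ be a weak solution, fix $t>0$ and $x,y\in D(A^*)$, and set
$$\phi(s)=\<Q(s)\,e^{(t-s)A^*}x,\;e^{(t-s)A^*}y\>_X,\qquad s\in[0,t].$$
Since $e^{(t-s)A^*}$ leaves $D(A^*)$ invariant, both arguments stay in $D(A^*)$; differentiating by the product rule, the two terms generated by $\tfrac{\ud}{\ud s}e^{(t-s)A^*}=-A^*e^{(t-s)A^*}$ cancel against the two terms involving $A^*$ produced by the weak Lyapunov equation, leaving the single surviving term
$$\phi'(s)=\<B^*e^{(t-s)A^*}x,\;B^*e^{(t-s)A^*}y\>_U.$$
Integrating over $[0,t]$, using $Q(0)=0$ so that $\phi(0)=0$, and performing the change of variable $r=t-s$, I get $\<Q(t)x,y\>_X=\phi(t)=\int_0^t\<B^*e^{rA^*}x,B^*e^{rA^*}y\>_U\,\ud r=\<Q_tx,y\>_X$. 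As $D(A^*)$ is dense and both operators are bounded, $Q(t)=Q_t$. For the algebraic uniqueness I run the identical computation with the constant operator $Q$ and $\psi(s)=\<Q\,e^{sA^*}x,e^{sA^*}y\>_X$, obtaining $\psi'(s)=-\<B^*e^{sA^*}x,B^*e^{sA^*}y\>_U$; integrating over $[0,+\infty)$ and using the negative type from Assumption \ref{hp:main}(ii), which gives $\|e^{sA^*}x\|_X\le Me^{-\omega s}\|x\|_X\to 0$ and hence $\psi(s)\to 0$, yields $\<Qx,y\>_X=\<Q_\infty x,y\>_X$, i.e. $Q=Q_\infty$.

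The main obstacle is the rigorous justification of the differentiation of $\phi$ (and $\psi$): the hypothesis only provides differentiability of $s\mapsto\<Q(s)\xi,\eta\>_X$ for \emph{fixed} $\xi,\eta\in D(A^*)$, whereas in $\phi$ the arguments $e^{(t-s)A^*}x$ move with $s$, so I need joint differentiability of the bilinear expression. I would obtain this by splitting the increment $\phi(s+h)-\phi(s)$ into a part where $Q$ varies with frozen arguments (controlled by the assumed pointwise differentiability) and a part where the arguments vary with $Q$ frozen (controlled by strong continuity and the differentiability of $s\mapsto e^{(t-s)A^*}x$ in $D(A^*)$). This last step requires local boundedness of $\|Q(s)\|$, which I would secure from the pointwise continuity of $s\mapsto\<Q(s)\xi,\eta\>_X$ together with positivity and the Banach--Steinhaus theorem. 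Once joint differentiability is in hand, the cancellation above is purely algebraic and the conclusion is immediate.
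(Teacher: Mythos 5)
Your proof is correct and follows essentially the same route as the paper's: the existence part is identical (differentiate \eqref{qt} and invoke Lemma \ref{proreg}-(i)), and the uniqueness part rests on the same device of conjugating the weak equation with the adjoint semigroup $e^{(t_0-s)A^*}$ so that the two $A^*$-terms cancel. The only difference is cosmetic --- the paper applies the trick to the \emph{difference} of two solutions, whose conjugated quadratic form is then constant and vanishes by polarization, whereas you integrate the inhomogeneous identity directly to recover $\langle Q_t x,y\rangle_X$ --- and your closing remarks on rigorously justifying the differentiation of $\phi$ address a point the paper itself passes over with ``by simple computations''.
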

\begin{proof}
We give the proof for the reader's convenience since we did not find it in the literature. Indeed, in \cite[Theorem 5.1.3]{CurtainZwart95}, in \cite[part II, Chapter 1, Theorem 2.4]{BDDM07} and in \cite[Appendix D]{DaPratoZabczyk14} only the algebraic Riccati equation is considered, and it is shown that it has a positive operator-valued solution if and only if the semigroup generated by $A$ is exponentially stable.

Consider first the differential Lyapunov equation (\ref{eq:Lyapdiff}).
By the definition of $Q_t$ we obviously have
$$\frac{\ud}{\ud t}Q_tx = e^{tA}BB^* e^{tA^*} x \qquad \forall x\in X.$$
Then the existence result follows from Lemma \ref{proreg}-(i).

Concerning uniqueness, we observe that, if $Q_1(t)$ and $Q_2(t)$ are two functions with values in the space of bounded, selfadjoint, positive operators, and they both solve (\ref{eq:Lyapdiff}) in weak sense, then the difference $Q(t):=Q_1(t)-Q_2(t)$ satisfies the homogeneous equation
$$\frac{\ud}{\ud t}\<Q(t)x,y\>_X = \<Q(t)x,A^*y\>_X +\< A^*x, Q(t)y\>_X \qquad \forall t>0,$$
with $Q(0)=0$.
Now take any $x \in D(A^*)$ and $t_0>0$ and observe that, by simple computations,
$$
\frac{d}{dt}\langle Q(t)e^{(t_0-t)A^*}x,e^{(t_0-t)A^*}x  \rangle_X=0,
$$
so that it must be
$$
\langle  Q(t_0)x,x \rangle _X=
\langle  Q(0)e^{t_0 A^*}x,e^{t_0 A^*}x \rangle _X=0.
$$
Since $Q(t_0)$ is selfadjoint, we can use polarization to get $Q(t_0)=0$ for every $t_0>0$ and so the claim.

Now we look at the algebraic Lyapunov equation (\ref{eq:LyapunovX}).
From (\ref{Lya1bis}) it follows that $Q_\infty$ solves (\ref{eq:LyapunovX}).
To show uniqueness, similarly for the case of the differential Lyapunov equation, we
observe that, if $Q_1$ and $Q_2$ are two bounded, selfadjoint, positive operators which solve (\ref{eq:LyapunovX}) in weak sense, then the difference $Q:=Q_1-Q_2$ satisfies the homogeneous equation
$$ \<Qx,A^*y\>_X +\< A^*x, Qy\>_X =0 .$$
Hence, for any $x \in D(A^*)$ as before we deduce
$$
\frac{d}{dt}\langle Qe^{tA^*}x,e^{tA^*}x  \rangle_X=0,
$$
so that it must be, since $A$ is of negative type,
$$
\langle  Qx,x \rangle _X=\lim_{t \to + \infty}
\langle  Qe^{t A^*}x,e^{t A^*}x \rangle _X=0.
$$
As above, since $Q$ is selfadjoint, we use polarization getting $Q=0$ and so $Q_1=Q_2$.
\end{proof}

\begin{Remark}
{\rm
If $A$ is selfadjoint and commutes with $BB^*$, then, by Proposition \ref{pr:Qt}-(v), it also commutes with $Q_t$, $t \in [0,+\infty]$. Moreover, by the Lyapunov equation \myref{eq:Lyapdiff} we have, for all $x \in D(A)$,
$$
\frac{\ud}{\ud t}Q_t x=2AQ_t x + BB^*x
$$
and, by \myref{eq:LyapunovX}, we have, for all $x \in D(A)$,
$$
2A Q_\infty x =-BB^* x.
$$
Indeed, this last equality holds for all $x \in X$, as it follows from \myref{eq:Qtcommutingformula}.
Finally, from the last one we easily get, for all $y \in R(Q_\infty)\subseteq D(A)$,
taking $x=Q_\infty^{-1}y$,
$$
2A y =- BB^* Q_\infty^{-1}y.
$$
}
\hfill\qedo
\end{Remark}

\section{The Riccati equation}
\label{SE:RICCATI}

From Theorem \ref{th:th23Zab} above we know that the value function
$V(t,\cdot)$ is finite only in the set $R(Q_t^{1/2})$ and is given by
$V(t,x)=\frac12\|Q_t^{-1/2}x\|^2_X$. Moreover for points $x\in R(Q_t)$ we
can write $V(t,x)=\frac12\langle Q_t^{-1}x,x \rangle_X$. So $V$ is a
quadratic form on $X$, defined however only for $x\in R(Q_t^{1/2})$; thus we expect that the associated operator $Q_t^{-1}$ solves\footnote{See Definition \ref{df:solutionDRE} for the formal definition of solution.} our Riccati equation \eqref{eq:algriccintroX},
which we rewrite here for the reader's convenience:
\begin{equation}\label{eq:evriccprima}
\frac{\ud}{\ud t}\langle R(t)x,y \rangle_X = -\langle Ax, R(t)y\rangle_X - \langle R(t)x,Ay\rangle_X - \langle B^*R(t) x,B^*R(t)y \rangle_U\, , \; t>0,
\end{equation}
for every $x,y\in D(A)\cap D(R(t))$, with the initial condition $R(0^+)=+\infty$.
This is indeed the case, as we will prove later (see Theorem \ref{PsoldiR} below).
Note that the initial condition has to be properly interpreted and that we cannot expect uniqueness of the RE without any initial condition as, obviously, $R\equiv 0$ is a solution.



Equation (\ref{eq:evriccprima}) is hard for several reasons: the
infinite initial condition (arising also in \cite{PriolaZabczyk03}),
the negative sign of the linear part (which does not arise in
\cite{PriolaZabczyk03}) and the unboundedness of the expected
solution (which is also not present in
\cite{PriolaZabczyk03}). Indeed the difference due to the negative sign is
substantial: even in the simplest diagonal case (see Subsection \ref{SSE:DIAGONAL})
there is no semigroup associated to the linear part of (\ref{eq:evriccprima}) on the whole space $X$, so that the equation cannot be rewritten in mild form as usual (see e.g \cite[Theorem 4.1, p. 234]{Zabczyk92}).

Note that, if we change the sign of the linear part, then we are exactly in the case treated by \cite{PriolaZabczyk03}, and the solution, when the null controllability Assumption \ref{NC} holds, just coincides with the operator-valued function on $X$ given, formally, by $e^{tA^*}Q_t^{-1}e^{tA}$.

\begin{Remark}
\label{rm:timereversed} 
{\rm We observe that performing a time inversion in the state equation, or in the RE, does
not change the difficulty of the problem, which lies in the fact that the equation is forward and the linear part is negative. Of course this is not true if $A$ generates not just a $C_0$-semigroup but a $C_0$-group (this includes the case of bounded $A$).
We do not want to assume this, since our examples, in particular the diagonal one (which arises in our motivating application to physics, see \cite{BDGJL4} and Subsection \ref{SSE:DIAGONAL}) does not possess such property.
}
\end{Remark}

\subsection{The space $H$ and its properties}
\label{SSE:SPACEH}

In order to study equation (\ref{eq:evriccprima}) it will be useful to rewrite it in a different form and in a different space, which we call $H$: under the null controllability assumption (Assumption \ref{NC}) it is the reachable set of the control system \myref{eq:state-fin-new}, hence the set where the value function $V$ of \myref{eq:defV0-new}
is well defined. Then
we define
\begin{equation}\label{spH}
H=R(Q_\infty^{1/2}).
\end{equation}
Of course it holds
$$H \subseteq \overline{R(Q_\infty^{1/2})}= [\ker Q_\infty^{1/2}]^\perp
=[\ker Q_\infty]^\perp.
$$
The inclusion is in general proper.
Define in $H$ the inner product
\begin{equation}\label{nH}
\langle x, y\rangle_H = \langle Q_\infty^{-1/2}x,Q_\infty^{-1/2}y \rangle_X \qquad \forall x,y\in H.
\end{equation}

We provide now some useful results on the space $H$ which will form the ground for our main results. We divide them in six Lemmas, whose proofs are collected in Appendix \ref{SSE:PROOFS}.
The first three concern the structure of the space $H$ and the behaviour in $H$ of the operators $Q_t$.
\begin{Lemma}\label{HH}
\begin{itemize}
  \item[]
  \item[(i)] The space $H$ introduced in (\ref{spH}), endowed with the inner product (\ref{nH}), is a Hilbert space continuously embedded into $X$.

  \item[(ii)] The space $R(Q_\infty)$ is dense in $H$.

  \item[(iii)] The operator $Q_\infty^{-1/2}$ is an isometric isomorphism from $H$ to $[\ker Q_\infty^{1/2}]^\perp$, and in particular
\begin{equation}\label{qinfty2}
\|Q_\infty^{-1/2}x\|_X = \|x\|_H \qquad \forall x\in H.
\end{equation}
  \item[(iv)]
We have
$$
\|Q_\infty^{1/2}\|_{\Lc(X)}=\|Q_\infty^{1/2}\|_{\Lc(H)}.
$$
  \item[(v)] For every $F \in \Lc(X)$ such that $R(F) \subseteq H$ we have $Q_\infty^{-1/2}F \in \Lc (X)$, so that $F\in \Lc(X,H)$.
\end{itemize}
\end{Lemma}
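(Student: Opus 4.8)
The whole lemma rests on a single observation, which I would establish first as item (iii): by the construction of the pseudoinverse (see Appendix \ref{SS:PSEUDOINVERSES}), every $x\in H=R(Q_\infty^{1/2})$ has a unique preimage $v\in[\ker Q_\infty^{1/2}]^\perp$ with $x=Q_\infty^{1/2}v$, and $Q_\infty^{-1/2}x:=v$. The definition (\ref{nH}) then reads $\|x\|_H=\|Q_\infty^{-1/2}x\|_X=\|v\|_X$, which is exactly (\ref{qinfty2}) and says that $Q_\infty^{-1/2}\colon H\to[\ker Q_\infty^{1/2}]^\perp$ is isometric; it is onto (via $v\mapsto Q_\infty^{1/2}v$) and, being isometric, injective, hence an isometric isomorphism. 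Throughout I would use the two companion pseudoinverse identities $Q_\infty^{1/2}Q_\infty^{-1/2}=I$ on $H$ and $Q_\infty^{-1/2}Q_\infty^{1/2}=P$, the orthogonal projection of $X$ onto $[\ker Q_\infty^{1/2}]^\perp$.

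Item (i) is then immediate: an isometric isomorphism transports completeness, and $[\ker Q_\infty^{1/2}]^\perp$ is complete as a closed subspace of $X$, so $(H,\|\cdot\|_H)$ is a Hilbert space; the continuous embedding is the one-line estimate $\|x\|_X=\|Q_\infty^{1/2}Q_\infty^{-1/2}x\|_X\le\|Q_\infty^{1/2}\|_{\Lc(X)}\|x\|_H$.

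For (ii) I would transport density through the isometry: $R(Q_\infty)$ is dense in $H$ if and only if $Q_\infty^{-1/2}(R(Q_\infty))$ is dense in $[\ker Q_\infty^{1/2}]^\perp$ in the $X$-norm. Since $Q_\infty z=Q_\infty^{1/2}(Q_\infty^{1/2}z)$ with $Q_\infty^{1/2}z\in[\ker Q_\infty^{1/2}]^\perp$, one gets $Q_\infty^{-1/2}(R(Q_\infty))=R(Q_\infty^{1/2})=H$, which is dense in $[\ker Q_\infty^{1/2}]^\perp$ by definition, and pulling back gives the claim. For (iv), note first that $Q_\infty^{1/2}(H)=R(Q_\infty)\subseteq H$; then for $x\in H\subseteq[\ker Q_\infty^{1/2}]^\perp$ one has $\|Q_\infty^{1/2}x\|_H=\|Q_\infty^{-1/2}Q_\infty^{1/2}x\|_X=\|x\|_X$, and writing $x=Q_\infty^{1/2}v$ with $v\in[\ker Q_\infty^{1/2}]^\perp$ yields
$$\|Q_\infty^{1/2}\|_{\Lc(H)}=\sup_{x\in H\setminus\{0\}}\frac{\|x\|_X}{\|x\|_H}=\sup_{v\in[\ker Q_\infty^{1/2}]^\perp\setminus\{0\}}\frac{\|Q_\infty^{1/2}v\|_X}{\|v\|_X}=\|Q_\infty^{1/2}\|_{\Lc(X)},$$
the last equality holding because $Q_\infty^{1/2}$ vanishes on $\ker Q_\infty^{1/2}$.

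The only point needing care is (v), which I regard as the main obstacle: since $Q_\infty^{-1/2}$ is unbounded one cannot estimate $Q_\infty^{-1/2}F$ directly, so I would invoke the closed graph theorem. The operator $Q_\infty^{-1/2}F\colon X\to X$ is well defined because $R(F)\subseteq H=D(Q_\infty^{-1/2})$; if $x_n\to x$ and $Q_\infty^{-1/2}Fx_n\to w$ in $X$, then $Fx_n\to Fx$ and $Fx_n=Q_\infty^{1/2}Q_\infty^{-1/2}Fx_n\to Q_\infty^{1/2}w$, so $Q_\infty^{1/2}w=Fx$; as $w\in[\ker Q_\infty^{1/2}]^\perp$ (a closed set) this forces $w=Q_\infty^{-1/2}Fx$, the graph is closed, and $Q_\infty^{-1/2}F$ is bounded. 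Finally $\|Fx\|_H=\|Q_\infty^{-1/2}Fx\|_X\le\|Q_\infty^{-1/2}F\|_{\Lc(X)}\|x\|_X$ gives $F\in\Lc(X,H)$. Everything else is bookkeeping built on the isometry (iii).
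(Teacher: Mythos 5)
Your proof is correct and follows essentially the same route as the paper: both rest on the pseudoinverse identities $Q_\infty^{1/2}Q_\infty^{-1/2}=I$ on $R(Q_\infty^{1/2})$ and $Q_\infty^{-1/2}Q_\infty^{1/2}=P_{[\ker Q_\infty^{1/2}]^\perp}$, the same embedding estimate, the same density argument via $\overline{R(Q_\infty^{1/2})}=[\ker Q_\infty^{1/2}]^\perp$, and the closed graph theorem for (v). The only difference is organizational — you prove (iii) first and transport completeness and density through the isometry, whereas the paper argues (i) and (ii) directly with Cauchy sequences — which changes nothing of substance.
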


\begin{Lemma}\label{dens} For $0<t\le \infty$ let $Q_t$ be the operator defined by (\ref{qt}). Then, if $t\in [T_0, + \infty]$ the space $Q_t(D(A^*))$ is dense in $H$.
In particular $D(A)\cap H$ is dense in $H$. Finally, if $x\in D(A)\cap H$
then $Ax \in [\ker Q_\infty]^\perp$.
\end{Lemma}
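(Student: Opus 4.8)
The plan is to treat the three assertions separately, reducing the two density statements from $H$ to $X$ through the isometric isomorphism $Q_\infty^{-1/2}\colon H\to K$ onto $K:=[\ker Q_\infty]^\perp$ furnished by Lemma~\ref{HH}(iii). The pointwise statement is immediate and logically independent: since $H=R(Q_\infty^{1/2})\subseteq[\ker Q_\infty^{1/2}]^\perp=[\ker Q_\infty]^\perp$, any $x\in D(A)\cap H$ also lies in $D(A)\cap[\ker Q_\infty]^\perp$, so Lemma~\ref{proreg}(iv) yields $Ax\in[\ker Q_\infty]^\perp$. For the density claims I first record that $Q_t(D(A^*))\subseteq H$ for every $t$: from $Q_t\le Q_\infty$ one gets $R(Q_t)\subseteq R(Q_t^{1/2})\subseteq R(Q_\infty^{1/2})=H$, so the statement is meaningful. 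Because $Q_\infty^{-1/2}$ is an isometry of $H$ onto $K$, density of $Q_t(D(A^*))$ in $H$ is equivalent to density of $Q_\infty^{-1/2}Q_t(D(A^*))$ in $K$ for the $X$-norm.

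The case $t=+\infty$ is clean and requires no controllability, which is why I isolate it. Here $Q_\infty^{-1/2}Q_t(D(A^*))=Q_\infty^{1/2}(D(A^*))$, whose $X$-closure is $\overline{Q_\infty^{1/2}(X)}=\overline{R(Q_\infty^{1/2})}=[\ker Q_\infty^{1/2}]^\perp=K$, since $D(A^*)$ is dense in $X$ and $Q_\infty^{1/2}\in\Lc(X)$. This already settles $t=+\infty$.

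For finite $t\in[T_0,+\infty[$ the decisive extra input is the range identity $R(Q_t^{1/2})=H$, valid precisely because $t\ge T_0$. Granting it, Lemma~\ref{HH}(v) applied to $F=Q_t^{1/2}$ shows $T_t:=Q_\infty^{-1/2}Q_t^{1/2}\in\Lc(X)$, with $R(T_t)=Q_\infty^{-1/2}(R(Q_t^{1/2}))=Q_\infty^{-1/2}(H)=K$ and $T_t|_{\ker Q_t}=0$ (as $\ker Q_t=\ker Q_t^{1/2}$). Writing $Q_\infty^{-1/2}Q_t y=T_t(Q_t^{1/2}y)$ for $y\in D(A^*)$ gives $Q_\infty^{-1/2}Q_t(D(A^*))=T_t\big(Q_t^{1/2}(D(A^*))\big)$; since $D(A^*)$ is dense and $Q_t^{1/2}$ is bounded, the $X$-closure of $Q_t^{1/2}(D(A^*))$ equals $\overline{R(Q_t^{1/2})}=[\ker Q_t]^\perp$, and applying the bounded map $T_t$ (using $R(T_t)=K$ and $T_t|_{\ker Q_t}=0$) yields $K=T_t([\ker Q_t]^\perp)\subseteq\overline{Q_\infty^{-1/2}Q_t(D(A^*))}\subseteq K$. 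Hence $Q_\infty^{-1/2}Q_t(D(A^*))$ is dense in $K$, i.e. $Q_t(D(A^*))$ is dense in $H$.

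The ``in particular'' statement then follows from the case $t=+\infty$: by Lemma~\ref{proreg}(i) we have $Q_\infty(D(A^*))\subseteq D(A)$, and trivially $Q_\infty(D(A^*))\subseteq R(Q_\infty)\subseteq H$, so $Q_\infty(D(A^*))$ is a dense subset of $H$ contained in $D(A)\cap H$, whence $D(A)\cap H$ is dense in $H$. The only genuinely non-routine step is the range identity $R(Q_t^{1/2})=H$ for finite $t\ge T_0$, which I expect to be the main obstacle. I would derive it from Assumption~\ref{NC} via the semigroup identity $Q_\infty=Q_t+e^{tA}Q_\infty e^{tA^*}$ combined with the range-of-a-sum formula $R((P+N)^{1/2})=R(P^{1/2})+R(N^{1/2})$ for positive operators, together with $R(e^{tA}Q_\infty^{1/2})\subseteq R(e^{tA})\subseteq R(Q_t^{1/2})$ (the last inclusion being the propagation of null controllability from $T_0$ to $t\ge T_0$); alternatively one may simply cite the corresponding property of the controllability operators proved earlier (Proposition~\ref{monotQt} / Appendix~\ref{SSE:CONTROLLABOPERATORS}). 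This is exactly where the restriction $t\ge T_0$ is used.
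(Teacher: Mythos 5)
Your proposal is correct and follows essentially the same route as the paper: both reduce density in $H$ to density in $X$ through the isometry $Q_\infty^{-1/2}\colon H\to[\ker Q_\infty]^\perp$ of Lemma \ref{HH}(iii), exploit the density of $D(A^*)$ in $X$ together with $R(Q_t^{1/2})=R(Q_\infty^{1/2})$ for $t\ge T_0$ (Proposition \ref{monotQt}), and obtain the last two claims from Lemma \ref{proreg}(i) and (iv). The only difference is cosmetic: the paper writes out the approximation $\|Q_\infty y_n-x\|_H=\|Q_\infty^{1/2}y_n-z\|_X$ only for $t=\infty$ and calls the finite case ``quite similar,'' whereas you make that case explicit via the bounded operator $T_t=Q_\infty^{-1/2}Q_t^{1/2}$, which is a welcome filling-in of the omitted detail.
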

\begin{Lemma}\label{relinH} For $0<t\le \infty$ let $Q_t$ be the operator defined by (\ref{qt}). Then
\begin{description}
\item[(i)] $\langle z, Q_t^{-1/2}w\rangle_X = \langle Q_t^{-1/2}z, w\rangle_X$ for all $z,w\in R(Q_t^{1/2})$;
\item[(ii)] $\langle Q_\infty^{1/2}x, y\rangle_H = \langle x, Q_\infty^{1/2}y\rangle_H$ for all $x,y\in H$;
\item[(iii)] $\langle Q_\infty x, y\rangle_H = \langle x, Q_\infty y\rangle_H$ for all $x,y\in H$.
\end{description}
\end{Lemma}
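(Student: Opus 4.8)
Lemma \ref{relinH} — proof plan.

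The three statements are all of the same flavour: each asserts that a specific operator (the pseudoinverse $Q_t^{-1/2}$ in (i), and $Q_\infty^{1/2}$ or $Q_\infty$ viewed as operators on the Hilbert space $H$ in (ii) and (iii)) is symmetric with respect to the relevant inner product. My plan is to prove (i) directly from the abstract theory of pseudoinverses (Appendix \ref{SS:PSEUDOINVERSES}), and then to deduce (ii) and (iii) from the defining relation (\ref{nH}) of the $H$-inner product by reducing everything to inner products in $X$ involving $Q_\infty^{-1/2}$.

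\textbf{Part (i).} Fix $z,w\in R(Q_t^{1/2})$ and write $z=Q_t^{1/2}a$, $w=Q_t^{1/2}b$ with $a,b\in[\ker Q_t^{1/2}]^\perp$; by the definition of the pseudoinverse we then have $Q_t^{-1/2}z=a$ and $Q_t^{-1/2}w=b$. Since $Q_t^{1/2}$ is selfadjoint on $X$, I compute
\[
\langle z, Q_t^{-1/2}w\rangle_X=\langle Q_t^{1/2}a,b\rangle_X=\langle a,Q_t^{1/2}b\rangle_X=\langle Q_t^{-1/2}z,w\rangle_X,
\]
which is exactly the claim. The only point requiring a word of care is that $Q_t^{-1/2}z$ and $Q_t^{-1/2}w$ land in $[\ker Q_t^{1/2}]^\perp$, so that the selfadjointness of $Q_t^{1/2}$ can be applied cleanly; this is guaranteed by the definition of the pseudoinverse recalled in Theorem \ref{th:th23Zab}(ii) and in the Appendix.

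\textbf{Parts (ii) and (iii).} For (ii), take $x,y\in H$. Using (\ref{nH}) and the fact that $Q_\infty^{-1/2}Q_\infty^{1/2}$ acts as the identity on $[\ker Q_\infty^{1/2}]^\perp$, I rewrite
\[
\langle Q_\infty^{1/2}x,y\rangle_H=\langle Q_\infty^{-1/2}Q_\infty^{1/2}x,\,Q_\infty^{-1/2}y\rangle_X=\langle x,\,Q_\infty^{-1/2}y\rangle_X,
\]
and symmetrically $\langle x,Q_\infty^{1/2}y\rangle_H=\langle Q_\infty^{-1/2}x,y\rangle_X$. The two right-hand sides agree by part (i) applied with $t=\infty$, $z=x$, $w=y$ (note $x,y\in H=R(Q_\infty^{1/2})$, so the hypotheses of (i) are met). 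For (iii) I proceed identically, this time writing $\langle Q_\infty x,y\rangle_H=\langle Q_\infty^{-1/2}Q_\infty x,Q_\infty^{-1/2}y\rangle_X=\langle Q_\infty^{1/2}x,Q_\infty^{-1/2}y\rangle_X$ and comparing with $\langle x,Q_\infty y\rangle_H=\langle Q_\infty^{-1/2}x,Q_\infty^{1/2}y\rangle_X$; these coincide again by part (i) (or directly by the selfadjointness of $Q_\infty^{1/2}$ on $X$ together with the symmetry established in (i)). I do not expect any serious obstacle here; the only subtlety worth flagging is the consistent bookkeeping of when a vector lies in $R(Q_\infty^{1/2})$ versus in $[\ker Q_\infty^{1/2}]^\perp$, so that each application of the pseudoinverse identity is legitimate. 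These membership facts are supplied by Lemma \ref{HH}(iii), which identifies $Q_\infty^{-1/2}$ as an isometric isomorphism of $H$ onto $[\ker Q_\infty^{1/2}]^\perp$.
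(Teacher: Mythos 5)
Your proposal is correct and follows essentially the same route as the paper: part (i) via the pseudoinverse identities and the selfadjointness of $Q_t^{1/2}$ in $X$, and parts (ii)–(iii) by unwinding the definition (\ref{nH}) of $\langle\cdot,\cdot\rangle_H$ into $X$-inner products and invoking (i). The only cosmetic difference is that the paper deduces (iii) by applying (ii) twice to $Q_\infty = Q_\infty^{1/2}Q_\infty^{1/2}$, whereas you recompute it directly — both are immediate.
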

Suppose now that Assumption \ref{NC} holds. The next two lemmas deal with the operators $Q_t^{1/2}Q_\infty^{-1/2}$ and $Q_t^{-1/2}Q_\infty^{1/2}$. By Proposition \ref{monotQt}, we have
$$H = R(Q_\infty^{1/2}) = R(Q_t^{1/2}) \qquad \forall t\ge T_0,$$
so that $Q_t^{1/2}Q_\infty^{-1/2}$ is well defined from $H$ into $H$.
\begin{Lemma}\label{composQt} Under Assumption \ref{NC}, for fixed $t\ge T_0$ the operator $Q_t^{1/2}Q_\infty^{-1/2}:H\to H$ is an isomorphism, with inverse $Q_\infty^{1/2}Q_t^{-1/2}$.
Similarly for fixed $t\ge T_0$ the operator $Q_t^{-1/2}Q_\infty^{1/2}:H\to H$ is an isomorphism, with inverse $Q_\infty^{-1/2}Q_t^{1/2}$.
\end{Lemma}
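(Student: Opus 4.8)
The plan is to show that in each of the two asserted pairs the operators are genuine mutual inverses and are bounded, extracting boundedness from a closed graph argument rather than from explicit estimates. Throughout write $X_0:=[\ker Q_\infty^{1/2}]^\perp$. The first thing I would record is a range/kernel reduction: by Proposition \ref{monotQt} together with Assumption \ref{NC} one has $R(Q_t^{1/2})=R(Q_\infty^{1/2})=H$ for $t\ge T_0$; passing to closures gives $\overline{R(Q_t^{1/2})}=\overline{R(Q_\infty^{1/2})}$, hence $\ker Q_t^{1/2}=\ker Q_\infty^{1/2}$ and $[\ker Q_t^{1/2}]^\perp=[\ker Q_\infty^{1/2}]^\perp=X_0$. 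This coincidence of orthogonal complements is the algebraic core: on $X_0$ both $Q_t^{-1/2}Q_t^{1/2}$ and $Q_\infty^{-1/2}Q_\infty^{1/2}$ act as the identity (each being the orthogonal projection onto $X_0$), while $Q_t^{1/2}Q_t^{-1/2}=\mathrm{Id}_H$ and $Q_\infty^{1/2}Q_\infty^{-1/2}=\mathrm{Id}_H$ on $H=R(Q_t^{1/2})=R(Q_\infty^{1/2})$.

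With these identities the mutual-inverse relations are then immediate. For the first pair, given $x\in H$ put $z=Q_\infty^{-1/2}x\in X_0$; then $Q_\infty^{1/2}Q_t^{-1/2}\bigl(Q_t^{1/2}Q_\infty^{-1/2}x\bigr)=Q_\infty^{1/2}z=x$, using $z\in X_0=[\ker Q_t^{1/2}]^\perp$ in the inner cancellation, and the reverse composition is symmetric; the same computation, read with the exponents $1/2$ and $-1/2$ interchanged, treats the second pair. For boundedness I would invoke the closed graph theorem: $Q_\infty^{-1/2}$ is closed (pseudoinverse of the bounded operator $Q_\infty^{1/2}$, Appendix \ref{SS:PSEUDOINVERSES}) and $Q_t^{1/2}\in\Lc(X)$, so $Q_\infty^{-1/2}Q_t^{1/2}$ is closed; its domain $\{x:Q_t^{1/2}x\in R(Q_\infty^{1/2})\}$ equals all of $X$ precisely because $R(Q_t^{1/2})=R(Q_\infty^{1/2})$, whence $Q_\infty^{-1/2}Q_t^{1/2}\in\Lc(X)$, and symmetrically $Q_t^{-1/2}Q_\infty^{1/2}\in\Lc(X)$. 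Boundedness in $H$ then follows from $\|v\|_H=\|Q_\infty^{-1/2}v\|_X$: for instance
\[
\|Q_t^{1/2}Q_\infty^{-1/2}x\|_H=\|Q_\infty^{-1/2}Q_t^{1/2}Q_\infty^{-1/2}x\|_X\le\|Q_\infty^{-1/2}Q_t^{1/2}\|_{\Lc(X)}\,\|x\|_H,
\]
and analogously for the remaining three operators.

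The step I expect to be the genuine obstacle is the well-definedness, namely that each operator actually lands in $H$ rather than merely in the larger space $X_0$. For the first pair this is automatic, since $Q_t^{1/2}Q_\infty^{-1/2}$ and $Q_\infty^{1/2}Q_t^{-1/2}$ end with an application of $Q_t^{1/2}$, respectively $Q_\infty^{1/2}$, whose ranges are exactly $H$. For the second pair, however, both $Q_t^{-1/2}Q_\infty^{1/2}$ and $Q_\infty^{-1/2}Q_t^{1/2}$ end with a pseudoinverse whose range is a priori only $X_0\supseteq H$, so one must verify that the image nonetheless lies in $H$. A short computation reduces this to the range identity $R(Q_t)=R(Q_\infty)$ for $t\ge T_0$: indeed, for $x\in H$ one has $Q_\infty^{-1/2}Q_t^{1/2}x\in H$ if and only if $Q_t^{1/2}x\in R(Q_\infty)$, and $Q_t^{1/2}(H)=R(Q_t^{1/2}Q_\infty^{1/2})=R(Q_t)$.

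To close that gap I would exploit the controllability structure, which the purely functional-analytic argument above ignores. From the semigroup decomposition $Q_\infty=Q_t+e^{tA}Q_\infty e^{tA^*}$ and the observability inequality equivalent to Assumption \ref{NC} one obtains $e^{tA}Q_\infty e^{tA^*}\le\theta\,Q_\infty$ with $\theta<1$ for $t\ge T_0$; by the factorization (Douglas) criterion for operator ranges this already yields $e^{tA}(H)\subseteq H$ and the uniform equivalence $(1-\theta)Q_\infty\le Q_t\le Q_\infty$, from which I would derive the required two-sided range inclusion $R(Q_t)=R(Q_\infty)$ and hence the well-definedness. Once this is in hand, the bounded operators of the second pair map $H$ into $H$, and the mutual-inverse relations proved above make them isomorphisms; I regard this last range identity as the delicate point, the formal verifications being routine after the common-complement fact.
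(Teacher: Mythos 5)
Your treatment of the first pair is correct and is essentially the paper's own argument: the kernel coincidence $\ker Q_t^{1/2}=\ker Q_\infty^{1/2}$ (coming from $R(Q_t^{1/2})=R(Q_\infty^{1/2})$, Proposition \ref{monotQt}) makes the pseudoinverse cancellations work, and boundedness comes from a closed graph argument --- the paper runs the closed graph theorem directly in $H\times H$, while you run it in $X$ for $Q_\infty^{-1/2}Q_t^{1/2}$ and transfer to $H$ through the isometry $\|v\|_H=\|Q_\infty^{-1/2}v\|_X$, which is the same mechanism as Lemma \ref{HH}-(v). You are also right that the second pair is not symmetric to the first: $Q_t^{-1/2}Q_\infty^{1/2}$ and $Q_\infty^{-1/2}Q_t^{1/2}$ end with a pseudoinverse, whose image lies a priori only in $[\ker Q_\infty]^\perp$, and your reduction of the well-definedness to the range identity $R(Q_t)=R(Q_\infty)$ is accurate; the paper dismisses this case with ``quite analogous'', so flagging it is a genuine contribution.

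The gap is in your proposed proof of $R(Q_t)=R(Q_\infty)$. The two-sided inequality $(1-\theta)Q_\infty\le Q_t\le Q_\infty$ (which is itself correctly obtainable from $Q_\infty=Q_t+e^{tA}Q_\infty e^{tA^*}$ together with the observability inequality $e^{T_0A}e^{T_0A^*}\le c^2Q_{T_0}$) does not imply $R(Q_t)=R(Q_\infty)$. By the Douglas criterion (Proposition \ref{Pr:B.1}-(i), applied to selfadjoint operators), the inclusion $R(Q_t)\subseteq R(Q_\infty)$ is equivalent to $\|Q_tx\|_X\le k\|Q_\infty x\|_X$ for all $x$, i.e.\ to $Q_t^2\le k^2Q_\infty^2$; the order relation $Q_t\le Q_\infty$ only yields $\|Q_t^{1/2}x\|_X\le\|Q_\infty^{1/2}x\|_X$, hence $R(Q_t^{1/2})\subseteq R(Q_\infty^{1/2})$ --- exactly what Proposition \ref{monotQt}-(ii) already provides. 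Since $s\mapsto s^2$ is not operator monotone, the order relation cannot be squared, and writing $Q_\infty=Q_t^{1/2}(I+GG^*)Q_t^{1/2}$, with $G$ the Douglas factor of $e^{tA}Q_\infty^{1/2}$ through $Q_t^{1/2}$, shows the obstruction concretely: $R(Q_\infty)=Q_t^{1/2}\bigl((I+GG^*)R(Q_t^{1/2})\bigr)$, and a boundedly invertible operator need not preserve the non-closed subspace $R(Q_t^{1/2})$. So the step ``from which I would derive the required two-sided range inclusion'' is precisely the point that remains unproved, and with it the well-definedness of the second pair of operators as maps of $H$ into $H$.
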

Similarly, we have:
\begin{Lemma}\label{composQt2} Under Assumption \ref{NC}, for fixed $t\ge T_0$ the operator $Q_t^{-1/2}Q_\infty^{1/2}:X\to X$ is an isomorphism on the closed subspace
$[\ker Q_\infty]^\perp= \overline{R(Q_\infty^{1/2})}=[\ker Q_\infty^{1/2}]^\perp$, with
$$[Q_\infty^{-1/2}Q_t^{1/2}]Q_t^{-1/2}Q_\infty^{1/2}x=P_{[\ker Q_\infty]^\perp}x \qquad \forall x\in X.$$
\end{Lemma}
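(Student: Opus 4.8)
The plan is to reduce the whole statement to two elementary pseudoinverse identities combined with the range equality supplied by null controllability. Throughout set $K=[\ker Q_\infty]^\perp$. By Assumption \ref{NC} and Proposition \ref{monotQt} we have, for every $t\ge T_0$,
$$R(Q_t^{1/2})=R(Q_\infty^{1/2})=H,$$
and therefore, taking closures and using $\ker S=\ker S^{1/2}$ and $\overline{R(S^{1/2})}=[\ker S^{1/2}]^\perp$ for a positive selfadjoint $S$,
$$K=[\ker Q_\infty]^\perp=[\ker Q_\infty^{1/2}]^\perp=\overline{R(Q_\infty^{1/2})}=\overline{R(Q_t^{1/2})}=[\ker Q_t^{1/2}]^\perp=[\ker Q_t]^\perp.$$
This single identity is the engine of the statement: it guarantees that every pseudoinverse below is applied only on the range where it is defined, and that all the kernel-complements coincide with $K$.

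First I would prove the composition formula, which is the heart of the claim. Fix $x\in X$. Since $Q_\infty^{1/2}x\in R(Q_\infty^{1/2})=R(Q_t^{1/2})$, the vector $Q_t^{-1/2}Q_\infty^{1/2}x$ is well defined and, lying in the range of the pseudoinverse $Q_t^{-1/2}$, belongs to $[\ker Q_t^{1/2}]^\perp=K$. Using the identity $Q_t^{1/2}Q_t^{-1/2}y=y$, valid for $y\in R(Q_t^{1/2})$, with $y=Q_\infty^{1/2}x$, gives $Q_t^{1/2}Q_t^{-1/2}Q_\infty^{1/2}x=Q_\infty^{1/2}x$. Applying $Q_\infty^{-1/2}$ and invoking $Q_\infty^{-1/2}Q_\infty^{1/2}=P_{[\ker Q_\infty^{1/2}]^\perp}=P_K$ on $X$ yields exactly
$$[Q_\infty^{-1/2}Q_t^{1/2}]Q_t^{-1/2}Q_\infty^{1/2}x=P_K x,$$
which is the displayed formula. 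This step is pure bookkeeping with the standard pseudoinverse relations; the only nontrivial ingredient is the range equality above.

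For the isomorphism assertion I would first establish boundedness on $X$ of $C:=Q_t^{-1/2}Q_\infty^{1/2}$, which I expect to be the main obstacle: the pseudoinverse $Q_t^{-1/2}$ of an operator with non-closed range is itself unbounded, so boundedness of $C$ cannot be taken for granted and must come from the range equality. Because $R(Q_\infty^{1/2})=R(Q_t^{1/2})$, the operator $C$ is everywhere defined on $X$, and I claim it is closed. Indeed, if $x_n\to x$ and $Cx_n\to v$ in $X$, then $u_n:=Cx_n$ lies in the closed subspace $[\ker Q_t^{1/2}]^\perp$ and satisfies $Q_t^{1/2}u_n=Q_\infty^{1/2}x_n$; passing to the limit by continuity of $Q_t^{1/2}$ and $Q_\infty^{1/2}$ gives $Q_t^{1/2}v=Q_\infty^{1/2}x$ with $v\in[\ker Q_t^{1/2}]^\perp$, i.e. $v=Q_t^{-1/2}Q_\infty^{1/2}x=Cx$. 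Thus $C$ is a closed, everywhere-defined operator on the Banach space $X$, and the closed graph theorem yields $C\in\Lc(X)$ (equivalently, $C$ is the bounded Douglas factor associated with the inclusion $R(Q_\infty^{1/2})\subseteq R(Q_t^{1/2})$). The identical argument, with the roles of $t$ and $\infty$ interchanged, shows $D:=Q_\infty^{-1/2}Q_t^{1/2}\in\Lc(X)$.

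Finally I would conclude by symmetry. Both $C$ and $D$ have range contained in $K$ and annihilate $\ker Q_\infty=\ker Q_t$, so each restricts to a bounded operator $K\to K$. Restricting the formula just proved to $K$, where $P_K$ acts as the identity, gives $DC=\mathrm{Id}_K$. Running the same computation with $t$ and $\infty$ interchanged — legitimate precisely because $R(Q_t^{1/2})=R(Q_\infty^{1/2})$ and $[\ker Q_t]^\perp=[\ker Q_\infty]^\perp=K$ — gives $[Q_t^{-1/2}Q_\infty^{1/2}]Q_\infty^{-1/2}Q_t^{1/2}x=P_{[\ker Q_t]^\perp}x=P_K x$, that is $CD=\mathrm{Id}_K$ on $K$. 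Hence on $K$ the bounded operators $C$ and $D$ are mutually inverse, so $Q_t^{-1/2}Q_\infty^{1/2}$ is an isomorphism of $K$ onto itself, as claimed. Once the boundedness step is settled, everything else is the pseudoinverse algebra recorded above.
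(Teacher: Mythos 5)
Your proposal is correct and follows essentially the same route as the paper: the boundedness of $Q_t^{-1/2}Q_\infty^{1/2}$ is obtained by the identical closed-graph argument (using $Q_t^{1/2}[Q_t^{-1/2}Q_\infty^{1/2}x_n]=Q_\infty^{1/2}x_n$ and the closedness of $[\ker Q_t^{1/2}]^\perp$), and the bijectivity on $[\ker Q_\infty]^\perp$ together with the composition formula comes from the same pseudoinverse identities and the range equality $R(Q_t^{1/2})=R(Q_\infty^{1/2})$ of Proposition \ref{monotQt}. The only cosmetic difference is that you package injectivity and surjectivity as the two-sided identity $CD=DC=\mathrm{Id}$ on $[\ker Q_\infty]^\perp$, whereas the paper exhibits the explicit preimage $Q_\infty^{-1/2}Q_t^{1/2}y$; this is the same computation.
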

The last lemma describes the adjoint in $H$ of an operator $L\in {\cal L}([\ker Q_\infty]^\perp)\cap{\cal L}(H)$.
\begin{Lemma}\label{aggH} Let $L\in {\cal L}([\ker Q_\infty]^\perp)\cap {\cal L}(H)$. Then
$$\langle Lx,y\rangle_H = \langle x, Q_\infty L^* Q_\infty^{-1}y\rangle_H  \qquad \forall x\in H, \quad \forall y\in R(Q_\infty),$$
where $L^*\in {\cal L}([\ker Q_\infty]^\perp)$ is the adjoint of the operator $L$ in $[\ker Q_\infty]^\perp$.
\end{Lemma}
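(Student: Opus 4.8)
The plan is to verify the identity by a direct computation: I would reduce the $H$-inner product on the right-hand side to an $X$-inner product via the defining formula \myref{nH}, push the operator $Q_\infty^{1/2}$ around using the pseudoinverse calculus of Appendix \ref{SS:PSEUDOINVERSES}, translate the adjoint $L^*$ (taken in $X$ on $[\ker Q_\infty]^\perp$) into a pairing with $Lx$, and finally climb back into $H$ by the symmetry relation of Lemma \ref{relinH}. Before anything else I would check that the right-hand side is meaningful: since $y\in R(Q_\infty)$ the pseudoinverse gives $Q_\infty^{-1}y\in[\ker Q_\infty]^\perp$; because $L^*\in\Lc([\ker Q_\infty]^\perp)$ we get $L^*Q_\infty^{-1}y\in[\ker Q_\infty]^\perp$; and then $Q_\infty L^*Q_\infty^{-1}y\in R(Q_\infty)\subseteq H$, so the pairing $\langle x,Q_\infty L^*Q_\infty^{-1}y\rangle_H$ makes sense.

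Next I would expand by \myref{nH} and simplify. The pseudoinverse identities I intend to use are $Q_\infty^{-1/2}Q_\infty=Q_\infty^{1/2}$ on all of $X$ (since $Q_\infty^{-1/2}Q_\infty^{1/2}=P_{[\ker Q_\infty]^\perp}$ and $R(Q_\infty^{1/2})\subseteq[\ker Q_\infty]^\perp$), the identity $Q_\infty^{1/2}Q_\infty^{-1/2}x=x$ valid for $x\in H=R(Q_\infty^{1/2})$, and the self-adjointness of $Q_\infty^{1/2}$ in $X$. Concretely,
\begin{equation*}
\langle x,Q_\infty L^*Q_\infty^{-1}y\rangle_H
=\langle Q_\infty^{-1/2}x,\,Q_\infty^{-1/2}Q_\infty L^*Q_\infty^{-1}y\rangle_X
=\langle Q_\infty^{-1/2}x,\,Q_\infty^{1/2}L^*Q_\infty^{-1}y\rangle_X,
\end{equation*}
and moving $Q_\infty^{1/2}$ onto the first slot and using $Q_\infty^{1/2}Q_\infty^{-1/2}x=x$ gives $\langle x,Q_\infty L^*Q_\infty^{-1}y\rangle_H=\langle x,L^*Q_\infty^{-1}y\rangle_X$. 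Since both $x\in H\subseteq[\ker Q_\infty]^\perp$ and $Q_\infty^{-1}y\in[\ker Q_\infty]^\perp$, the defining property of $L^*$ as the $X$-adjoint of $L$ on $[\ker Q_\infty]^\perp$ yields $\langle x,L^*Q_\infty^{-1}y\rangle_X=\langle Lx,Q_\infty^{-1}y\rangle_X$.

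It then remains to recognize $\langle Lx,Q_\infty^{-1}y\rangle_X$ as $\langle Lx,y\rangle_H$. Here I would invoke the pseudoinverse factorization $Q_\infty^{-1}=Q_\infty^{-1/2}Q_\infty^{-1/2}$ on $R(Q_\infty)$, so that $\langle Lx,Q_\infty^{-1}y\rangle_X=\langle Lx,Q_\infty^{-1/2}(Q_\infty^{-1/2}y)\rangle_X$, and then the symmetry relation of Lemma \ref{relinH}(i) (with $t=+\infty$) to transpose one factor $Q_\infty^{-1/2}$; this is legitimate because $Lx\in H$ (as $L\in\Lc(H)$) and $Q_\infty^{-1/2}y\in R(Q_\infty^{1/2})=H$. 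The outcome is $\langle Q_\infty^{-1/2}Lx,Q_\infty^{-1/2}y\rangle_X=\langle Lx,y\rangle_H$, and chaining the three equalities closes the argument.

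The main obstacle is not any single deep idea but the bookkeeping of domains and ranges: at each step one must confirm that the vector being acted on lies in the correct subspace ($H$, $[\ker Q_\infty]^\perp$, $R(Q_\infty^{1/2})$ or $R(Q_\infty)$) so that the identities $Q_\infty^{-1/2}Q_\infty=Q_\infty^{1/2}$, $Q_\infty^{1/2}Q_\infty^{-1/2}=\mathrm{id}_H$, $Q_\infty^{-1}=(Q_\infty^{-1/2})^2$ on $R(Q_\infty)$, and the symmetry of Lemma \ref{relinH}(i) are applied only where they hold. The hypothesis $L\in\Lc([\ker Q_\infty]^\perp)\cap\Lc(H)$ is precisely what guarantees the membership $L^*Q_\infty^{-1}y\in[\ker Q_\infty]^\perp$ and $Lx\in H$ used above, so that both the formation of $L^*$ and the final passage back to $\langle\cdot,\cdot\rangle_H$ are justified.
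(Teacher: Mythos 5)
Your proposal is correct and is essentially the paper's own proof: the same chain of identities (expansion of $\langle\cdot,\cdot\rangle_H$ via \myref{nH}, the pseudoinverse calculus for $Q_\infty^{1/2}$, the $X$-adjoint $L^*$ on $[\ker Q_\infty]^\perp$, and the symmetry relation of Lemma \ref{relinH}), merely traversed from the right-hand side back to $\langle Lx,y\rangle_H$ instead of the other way around. The domain/range checks you add are sound and consistent with what the paper uses implicitly.
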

To avoid confusion, for any $L\in {\cal L}(H)$ we will denote by $L^{*H}$ the adjoint of $L$ in $H$, i.e. $L^{*H}=Q_\infty L^* Q_\infty^{-1}$. Moreover, for a subspace $V$ of $H$ we will write $V^{*H}$ for the topological dual of $V$ when $H$ is identified with its dual.\\
We remark that, under Assumption \ref{NC}, if $y\in R(Q_\infty)= Q_\infty^{1/2}(H)$ we have $Q_\infty^{-1/2}y \in H$ and, by Lemma \ref{composQt}, $Q_\infty^{1/2}Q_t^{-1/2}Q_\infty^{-1/2}y \in H$. Thus $Q_\infty Q_t^{-1/2}Q_\infty^{-1/2}y \in Q_\infty^{1/2}(H)=R(Q_\infty)$.
Consequently, under Assumption \ref{NC} we may write, by Lemma \ref{aggH},
\begin{equation}\label{adjQQ}
[Q_\infty^{1/2}Q_t^{-1/2}]^{*H}y = Q_\infty [Q_\infty^{1/2}Q_t^{-1/2}]^*Q_\infty^{-1}y = Q_\infty Q_t^{-1/2}Q_\infty^{-1/2}y \qquad \forall y\in R(Q_\infty).
\end{equation}

\subsection{Properties of the value function}
\label{SSE:VF}

We now state the main properties of the value function $V(t,x)$ defined by (\ref{eq:defV0-new}).
The proofs are in Appendix \ref{SSE:PROOFS}.

\begin{Proposition}\label{valfun} The value function $V$ given by (\ref{eq:defV0-new}) has the following properties:
\begin{itemize}
  \item[(i)] For every $t_0 >0$ and $x \in R(Q_{t_0}^{1/2})$, the function $V(\cdot,x)$ is decreasing in $\,]t_0,+ \infty[\,$.
  \item[(ii)] For every $t>0$ the function $V(t,\cdot)$ is quadratic with respect to $x\in R(Q_t^{1/2})$, i.e. there exists a linear positive selfadjoint operator
  $$P_V(t): R(Q_{t}^{1/2})\subseteq H \to [R(Q_{t}^{1/2})]^{*H} \supseteq H$$
      such that
\begin{equation}\label{forquaV1}
V(t,x)=\frac12 \langle P_V(t) x,x \rangle_{[R(Q_{t}^{1/2})]^{*H},R(Q_{t}^{1/2})} \qquad \forall t>0, \quad \forall x\in R(Q_{t}^{1/2});
\end{equation}
moreover we have
\begin{equation}\label{formaP1}
P_V(t) = [Q_\infty^{1/2}Q_t^{-1/2}]^{*H}Q_\infty^{1/2}Q_t^{-1/2} \qquad \forall t>0.
\end{equation}
  \item[(iii)]
Assume now that Assumption \ref{NC} holds. Then:
\begin{itemize}
  \item[(a)]  the operator $P_V(t)$ belongs to $\mathcal{L}(H)$ and
\begin{equation}\label{forquaV2}
V(t,x)=\frac12 \langle P_V(t) x,x \rangle_H \qquad \forall t\ge T_0, \quad \forall x\in H;
\end{equation}
in particular,
\begin{equation}\label{formaP2}
P_V(t)x = Q_\infty Q_t^{-1}x \qquad \forall x\in R(Q_t), \quad \forall t\ge T_0.
\end{equation}
In addition
\begin{equation}\label{glolim}
\|P_V(\tau)\|_{{\cal L}(H)}\le \|P_V(t)\|_{{\cal L}(H)}\le \|P_V(T_0)\|_{{\cal L}(H)} < \infty \qquad \forall \tau \ge t\ge T_0.
\end{equation}
  \item[(b)]
The map $(t,x) \mapsto V(t,x)$ from $[T_0,+\infty[\,\times H$ to $\R$ is continuous, uniformly on $[T_0,+\infty[\,\times B_H(0,R)$ for every $R>0$; moreover the map $t\mapsto P_V(t)$ from $[T_0,+\infty[\,$ to ${\cal L}(H)$ is continuous.
\item[(c)] Finally, we have
\begin{equation}\label{Vinfty}
\lim_{t\to \infty} V(t,x)= \frac12 \|x\|_H^2 \qquad \forall x\in H.
\end{equation}
\end{itemize}
\end{itemize}
\end{Proposition}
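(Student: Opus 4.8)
The starting point is Theorem \ref{th:th23Zab}, which gives the explicit formula $V(t,x)=\frac12\|Q_t^{-1/2}x\|_X^2$ for $x\in R(Q_t^{1/2})$; the whole proof is an analysis of this expression. For (i) the plan is to argue by ``waiting at the equilibrium'': since the system is autonomous, starts at $0$, and $u\equiv 0$ keeps the state at $0$, given $t_1<t_2$ and an admissible control steering $0$ to $x$ in time $t_1$, the control equal to $0$ on $[0,t_2-t_1]$ followed by the optimal time-$t_1$ strategy steers $0$ to $x$ in time $t_2$ at the same energy; hence $R(Q_{t_1}^{1/2})\subseteq R(Q_{t_2}^{1/2})$ and $V(t_2,x)\le V(t_1,x)$, i.e. $V(\cdot,x)$ is non-increasing. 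For (ii) the key observation is that $Q_t\le Q_\infty$ forces $\ker Q_\infty\subseteq\ker Q_t$, so $Q_t^{-1/2}x\in[\ker Q_t]^\perp\subseteq[\ker Q_\infty]^\perp$; combined with Lemma \ref{HH}(iii), which yields $\|Q_\infty^{1/2}w\|_H=\|P_{[\ker Q_\infty]^\perp}w\|_X$, this gives $\|Q_\infty^{1/2}Q_t^{-1/2}x\|_H=\|Q_t^{-1/2}x\|_X$. Writing $S(t):=Q_\infty^{1/2}Q_t^{-1/2}$, which maps $R(Q_t^{1/2})$ into $H$, I obtain $2V(t,x)=\|S(t)x\|_H^2=\langle S(t)^{*H}S(t)x,x\rangle$ in the $H$-duality, which is exactly \eqref{forquaV1}--\eqref{formaP1} and makes the quadratic, positive, selfadjoint nature of $P_V(t)$ transparent.

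For (iii)(a), under Assumption \ref{NC} Proposition \ref{monotQt} gives $R(Q_t^{1/2})=H$ for $t\ge T_0$, so by Lemma \ref{composQt} the operator $S(t)\in{\cal L}(H)$ is an isomorphism, whence $P_V(t)=S(t)^{*H}S(t)\in{\cal L}(H)$ and the duality pairing collapses to the genuine inner product of $H$, giving \eqref{forquaV2}. The identity $P_V(t)x=Q_\infty Q_t^{-1}x$ on $R(Q_t)$ follows by inserting \eqref{adjQQ} into $P_V(t)=S(t)^{*H}S(t)$. The bound \eqref{glolim} is then immediate: part (i) forces $P_V(\tau)\le P_V(t)$ for $\tau\ge t$, hence $\|P_V(\tau)\|_{{\cal L}(H)}\le\|P_V(t)\|_{{\cal L}(H)}$ for these positive operators, while finiteness of $\|P_V(T_0)\|_{{\cal L}(H)}$ is just the boundedness of $S(T_0)\in{\cal L}(H)$ from Lemma \ref{composQt}.

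Part (iii)(b) is where the real work lies, and it is the step I expect to be the main obstacle: $P_V(t)$ involves the unbounded $Q_\infty^{-1/2}$, and monotone plus strong convergence of positive operators does not yield norm convergence. My plan is to transport everything to $K:=[\ker Q_\infty]^\perp$ through the isometry $Q_\infty^{-1/2}:H\to K$ of Lemma \ref{HH}(iii); under it $P_V(t)$ becomes $\tilde P(t):=Q_\infty^{1/2}Q_t^{-1}Q_\infty^{1/2}\in{\cal L}(K)$, whose inverse (Lemma \ref{composQt2}) is $G(t):=Q_\infty^{-1/2}Q_tQ_\infty^{-1/2}$. The crucial computation is the semigroup identity $Q_t-Q_s=e^{sA}Q_{t-s}e^{sA^*}$ for $s<t$, which gives, for $s\ge T_0$, $\|G(t)-G(s)\|_{{\cal L}(K)}=\|Q_\infty^{-1/2}e^{sA}Q_{t-s}^{1/2}\|^2\le\|Q_\infty^{-1/2}e^{sA}\|^2\,\|Q_{t-s}\|$. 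Here Assumption \ref{NC} is essential: it forces $R(e^{sA})\subseteq H$ for $s\ge T_0$, so $Q_\infty^{-1/2}e^{sA}\in{\cal L}(X)$ by the closed graph theorem, with norm equal to $\|e^{sA}\|_{{\cal L}(H)}\le 1$ (the semigroup being a contraction on $H$, again from $e^{sA}Q_\infty e^{sA^*}=Q_\infty-Q_s\le Q_\infty$). Since $\|Q_{t-s}\|\le M^2\|B\|^2(t-s)$ by Assumption \ref{hp:main}(ii), $G(\cdot)$ is globally Lipschitz in ${\cal L}(K)$ on $[T_0,+\infty[\,$; inverting via $\tilde P(t)-\tilde P(s)=\tilde P(t)(G(s)-G(t))\tilde P(s)$ and using the uniform bound \eqref{glolim} transfers Lipschitz continuity to $\tilde P$, hence to $P_V$. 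The joint uniform continuity of $V$ on $[T_0,+\infty[\,\times B_H(0,R)$ then follows from $\|P_V(t)\|_{{\cal L}(H)}$ uniformly bounded and $t\mapsto P_V(t)$ norm-continuous, by the standard bilinear estimate $|V(t,x)-V(s,y)|\le\frac12\|P_V(t)\|\,\|x-y\|_H\|x+y\|_H+\frac12\|P_V(t)-P_V(s)\|\,\|y\|_H^2$.

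Finally, for (iii)(c) I would only establish pointwise convergence, which is all that \eqref{Vinfty} requires and which sidesteps the norm-continuity difficulty. Since $Q_t\to Q_\infty$ strongly, $G(t)\to I_K$ strongly as $t\to+\infty$, and $\tilde P(t)=G(t)^{-1}$ is uniformly bounded by \eqref{glolim}; hence for $\xi:=Q_\infty^{-1/2}x$ one has $\tilde P(t)\xi-\xi=\tilde P(t)(I_K-G(t))\xi\to 0$, so that $V(t,x)=\frac12\langle\tilde P(t)\xi,\xi\rangle\to\frac12\|\xi\|_X^2=\frac12\|x\|_H^2$.
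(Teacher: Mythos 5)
Your proof is essentially correct, and parts (i), (ii) and (iii)(a) follow the same route as the paper: the waiting-at-zero construction for monotonicity, the identity $\|Q_\infty^{1/2}w\|_H=\|w\|_X$ on $[\ker Q_t]^\perp\subseteq[\ker Q_\infty]^\perp$ to pass from \eqref{valuef} to \eqref{forquaV1}--\eqref{formaP1}, and Lemma \ref{relinH} to get \eqref{formaP2}. Where you genuinely diverge is in (iii)(b) and (iii)(c). The paper proves continuity by a control-theoretic argument: it shifts a near-optimal control for $V(t,x)$ to produce an admissible control for a nearby target at time $\tau<t$, estimates $\bigl\|\int_0^{t-\tau}e^{(t-\sigma)A}Bu(\sigma)\,\ud\sigma\bigr\|_H\le c\sqrt{t-\tau}\,\|u\|_{L^2}$ via Assumption \ref{NC}, and combines this one-sided estimate with monotonicity; norm continuity of $P_V$ then follows from $\|P_V(t)-P_V(\tau)\|_{\Lc(H)}=2\sup_{\|x\|_H=1}|V(t,x)-V(\tau,x)|$. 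You instead conjugate by the isometry $Q_\infty^{-1/2}$, work with $G(t)=Q_\infty^{-1/2}Q_tQ_\infty^{-1/2}=\tilde P(t)^{-1}$, and exploit the semigroup identity $Q_t-Q_s=e^{sA}Q_{t-s}e^{sA^*}$ of Proposition \ref{pr:Qt}(iv) together with the resolvent-type identity $\tilde P(t)-\tilde P(s)=\tilde P(t)(G(s)-G(t))\tilde P(s)$ and the uniform bound \eqref{glolim}. This is a purely operator-theoretic argument and it buys something the paper does not state: Lipschitz continuity of $t\mapsto P_V(t)$ in $\Lc(H)$ on $[T_0,+\infty[$, rather than mere continuity. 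Your pointwise treatment of (iii)(c) via $\tilde P(t)-I=\tilde P(t)(I-G(t))$ is likewise a clean variant of the paper's computation of $\langle(Q_\infty-Q_t)Q_t^{-1}x,Q_\infty^{-1}x\rangle_X$.

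One slip needs repair, though it does not sink the argument. You assert that $\|Q_\infty^{-1/2}e^{sA}\|_{\Lc(X)}$ equals $\|e^{sA}\|_{\Lc(H)}\le1$. In fact $\|Q_\infty^{-1/2}e^{sA}\|_{\Lc(X)}=\|e^{sA}\|_{\Lc(X,H)}$, and the contraction property of $e^{sA}$ on $H$ (which you correctly derive from $e^{sA}Q_\infty e^{sA^*}=Q_\infty-Q_s\le Q_\infty$) controls the $\Lc(H)$-norm, not the $\Lc(X,H)$-norm; since $\|x\|_H$ is not dominated by $\|x\|_X$, these are different quantities. The fix is the factorization $e^{sA}=e^{(s-T_0)A}e^{T_0A}$ for $s\ge T_0$: Assumption \ref{NC} gives $Q_\infty^{-1/2}e^{T_0A}\in\Lc(X)$ by the closed graph theorem (Lemma \ref{HH}(v)), and the $H$-contractivity of $e^{(s-T_0)A}$ then yields $\|Q_\infty^{-1/2}e^{sA}\|_{\Lc(X)}\le\|Q_\infty^{-1/2}e^{T_0A}\|_{\Lc(X)}$ uniformly in $s\ge T_0$, which is all your Lipschitz estimate for $G$ requires. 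The same uniform bound is also what justifies the step "$G(t)\to I_K$ strongly" in your (iii)(c): strong convergence $Q_t\to Q_\infty$ alone does not survive the outer unbounded factor $Q_\infty^{-1/2}$, but writing $Q_\infty-Q_t=e^{tA}Q_\infty e^{tA^*}$ and using $\|Q_\infty^{-1/2}e^{tA}\|_{\Lc(X)}\le C_0$ with $\|Q_\infty e^{tA^*}z\|_X\to0$ does the job.
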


\begin{Remark}\label{extP} {\em The equations (\ref{formaP1}) and (\ref{formaP2}) show that the operator $Q_\infty Q_t^{-1}$, defined on $R(Q_t)$, has in fact an extension to all of $H$, given by $[Q_\infty^{1/2}Q_t^{-1/2}]^{*H}Q_\infty^{1/2}Q_t^{-1/2}$, i.e. $P(t)$.}\hfill \qedo
\end{Remark}

\subsection{The value function solves the Riccati equation}
\label{SS:VSOLVESRICCATI}

We want now to show that the operator $P_V(t)$, given by (\ref{formaP1}) or (\ref{formaP2}), satisfies for $t\ge T_0$ the Riccati equation \myref{eq:evriccprima}. To do this we first rewrite it in the space $H$. The unknown is now, for all $t\in [0,T]$, an operator $P(t) \in \Lc(H)$ which is, formally, $Q_\infty R(t)$ where $R$ is the unknown of \myref{eq:evriccprima}, while the equation is
\begin{equation} \label{RiccatiHprima}
\frac{\ud}{\ud t}\langle P(t)x,y \rangle_H = -\langle Ax, Q_\infty^{-1}P(t)y\rangle_X - \langle Q_\infty^{-1}P(t)x,Ay\rangle_X - \langle B^*Q_\infty^{-1}P(t)x,B^*Q_\infty^{-1}P(t)y \rangle_U.
\end{equation}
Note that the term in the left-hand side is written using the inner product of the space $H$ while the first two in the right-hand side are written with the inner product in $X$:
they could be written in $H$, too, but at the price of requiring more regularity on the points $x,y$ (since $Ax, Ay$ in this case should belong to $H$).

\begin{Definition}
\label{df:solutionDRE}
Let $0<t_0<+\infty$.
\begin{itemize}
  \item[(i)] An operator-valued function $P:[t_0,+\infty[\,\to  \Lc_+(H)$ is a solution of the Riccati equation (\ref{RiccatiHprima}) if it is strongly continuous and for all $t\ge t_0$ there is a set $D_P(t) \subset H$, dense in $H$, such that for every $x,y\in D_P(t)$ there exists $\<\frac{\ud}{\ud t}P(t)x,y\>_H$, all terms of (\ref{RiccatiHprima}) make sense and the equation holds.

  \item[(ii)] A function $R$, defined on $[t_0,+\infty[\,$ with values in the set of closed, densely defined, unbounded, positive operators in $X$, is a solution of the Riccati equation \myref{eq:evriccprima} if for all $t\ge t_0$ there is a set $D_R(t) \subset X$, dense in $[\ker Q_\infty]^\perp$, such that for every $x,y\in D_R(t)$ there exists $\<\frac{\ud}{\ud t}R(t)x,y\>_X$, all terms of \myref{eq:evriccprima} make sense and the equation holds.
\end{itemize}
\end{Definition}

\begin{Remark}\label{rm} {\bf (i)} {\em  
In the above definition the domain $D_P(t)$ varies with time, since its natural
choice (see the next theorem) is $D(A)\cap R(Q_t)$ which may change with time.
Similarly for the domain $D_R(t)$. Moreover $D_R(t)$ is assumed to be dense in $[\ker Q_\infty]^\perp$ and not in $X$, since its natural choice (see the next theorem) is $R(Q_t)$ which is indeed dense in $[\ker Q_\infty]^\perp$ and not in $X$, in general.\\[1mm]
{\bf (ii)} Note that we wrote equation \eqref{RiccatiHprima} without the initial condition: the reason is that we are interested to study all solutions of such equation, also in view of the study of the infinite horizon case, where the initial condition disappears. Clearly, looking at our original minimum energy problem (see Theorem \ref{th:th23Zab}-(iii)), the natural condition for \eqref{RiccatiHprima} (respectively \myref{eq:evriccprima}) is $P(0^+)=+\infty$ (respectively $R(0^+)=+\infty$); this condition, more precisely, reads as
$$\lim_{t\to 0^+} \langle P(t)x,x \rangle_H =+\infty \qquad \forall x\in \bigcap_{0<t\le \delta} D_P(t),$$
for seme $\delta>0$ (and similarly for $R(t)$). 
}
\hfill\qedo
\end{Remark}
We present now the following existence result.
\begin{Theorem}\label{PsoldiR} Suppose that Assumptions \ref{hp:main} and \ref{NC} hold. Then the operator $P_V(t)$ given by (\ref{formaP1}) is a solution of (\ref{RiccatiHprima}) on $[T_0,+\infty[\,$,
with the set $D_{P_V}(t)$ given by $D(A)\cap R(Q_t)$ for all $t\ge T_0$.

Moreover the operator $R_V(t)=Q_t^{-1}$
is a solution of \myref{eq:evriccprima} on $[T_0,+\infty[\,$,
with the set $D_{R_V}(t)$ given by $D(A)\cap R(Q_t)$ for all $t\ge T_0$.

\end{Theorem}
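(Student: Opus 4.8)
The plan is to collapse both assertions of the theorem into a single scalar identity and then verify that identity by differentiating the pseudoinverse family $\{Q_t^{-1}\}$ and converting the result through the Lyapunov equation. The first step is to observe that the two claims are really one and the same. For $x,y\in R(Q_t)$, Theorem \ref{th:th23Zab}(iii) gives $V(t,x)=\frac12\langle Q_t^{-1}x,x\rangle_X$ and Proposition \ref{valfun}(iii) gives $V(t,x)=\frac12\langle P_V(t)x,x\rangle_H$; polarising the two quadratic forms yields $\langle P_V(t)x,y\rangle_H=\langle Q_t^{-1}x,y\rangle_X$ for all $x,y\in R(Q_t)$. Since \myref{formaP2} gives $Q_\infty^{-1}P_V(t)y=Q_t^{-1}y$, the right-hand side of \myref{RiccatiHprima} coincides termwise with that of \myref{eq:evriccprima}. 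Hence it suffices to prove, for every $t\ge T_0$ and all $x,y\in D(A)\cap R(Q_t)$,
\begin{equation}\label{plan:target}
\frac{\ud}{\ud t}\langle Q_t^{-1}x,y\rangle_X = -\langle Ax,Q_t^{-1}y\rangle_X-\langle Q_t^{-1}x,Ay\rangle_X-\langle B^*Q_t^{-1}x,B^*Q_t^{-1}y\rangle_U .
\end{equation}

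To see the mechanism I would set $\Phi(t)=e^{tA}BB^*e^{tA^*}$, which by Proposition \ref{qtlyap} is the strong derivative $\frac{\ud}{\ud t}Q_t$, so that $Q_t-Q_s=\int_s^t\Phi(r)\,\ud r$. Writing $a=Q_t^{-1}x$, $b=Q_t^{-1}y$ and using $Q_ta\equiv x\equiv Q_s(Q_s^{-1}x)$ one obtains the difference-quotient identity $\langle Q_t^{-1}x,y\rangle_X-\langle Q_s^{-1}x,y\rangle_X=-\langle(Q_t-Q_s)Q_s^{-1}x,Q_t^{-1}y\rangle_X$, and hence, formally, $\frac{\ud}{\ud t}\langle Q_t^{-1}x,y\rangle_X=-\langle\Phi(t)a,b\rangle_X=-\langle B^*e^{tA^*}a,B^*e^{tA^*}b\rangle_U$. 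It then remains to identify this with the right-hand side of \myref{plan:target}, and here the Lyapunov structure enters: testing \myref{Lya} (the form valid on $D((AQ_t)^*)$) against $a$ and $b$ gives $\langle\Phi(t)a,b\rangle_X=\langle AQ_ta,b\rangle_X+\langle a,AQ_tb\rangle_X+\langle BB^*a,b\rangle_X=\langle Ax,b\rangle_X+\langle a,Ay\rangle_X+\langle B^*a,B^*b\rangle_U$, which is exactly the negative of the right-hand side of \myref{plan:target}.

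Two points must be made rigorous, and I expect the domain bookkeeping to be the more delicate. The conversion just sketched applies \myref{Lya} verbatim only for $x,y\in Q_t(D((AQ_t)^*))$, whereas the theorem asserts the larger domain $D(A)\cap R(Q_t)=Q_t(D(AQ_t))$. Reconciling these — showing that the relevant $a=Q_t^{-1}x$ lies in a domain on which the Lyapunov relation can be applied, or else extending \myref{plan:target} to all of $D(A)\cap R(Q_t)$ by approximation — is the heart of the matter. Here I would lean on Lemma \ref{proreg}(ii)--(iv): the inclusions $D(A^*)\subseteq D((AQ_t)^*)\subseteq D(AQ_t)$ locate the natural computational domain, the density of $Q_t(D(A^*))$ in $H$ (Lemma \ref{dens}) supplies the approximating elements, and the fact that $Ax\in[\ker Q_t]^\perp$ for $x\in D(A)\cap[\ker Q_t]^\perp$ (Lemma \ref{proreg}(iv)) keeps the terms $\langle Ax,b\rangle_X$ and $\langle a,Ay\rangle_X$ meaningful and stable in the limit.

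The second point is the genuine differentiability of $t\mapsto Q_t^{-1}x$ underlying the difference-quotient step. Because $R(Q_t)=H$ is a proper subspace of $X$ and $\ker Q_t$ may be nontrivial, $Q_t$ is not boundedly invertible and pseudoinverses need not depend continuously on $t$; one must therefore justify both that $x\in R(Q_s)$ for $s$ near $t$ (via the monotonicity of the ranges, Proposition \ref{monotQt}) and that $s\mapsto Q_s^{-1}x$ is continuous, so that the difference quotient, rewritten as $-\frac{1}{t-s}\int_s^t\langle B^*e^{rA^*}Q_s^{-1}x,B^*e^{rA^*}Q_t^{-1}y\rangle_U\,\ud r$, converges to $-\langle\Phi(t)a,b\rangle_X$. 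I would derive this continuity from the isomorphism properties of $Q_\infty^{1/2}Q_t^{-1/2}$ and $Q_t^{-1/2}Q_\infty^{1/2}$ on $H$ (Lemmas \ref{composQt} and \ref{composQt2}) together with the norm-continuity and uniform bound of $t\mapsto P_V(t)$ in $\mathcal{L}(H)$ from Proposition \ref{valfun}(iii), transferring the good behaviour in $H$ back to the pseudoinverse in $X$.
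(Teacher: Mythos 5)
Your overall architecture is the paper's: differentiate the pseudoinverse family through the algebraic identity $Q_t^{-1}-Q_s^{-1}=-Q_t^{-1}(Q_t-Q_s)Q_s^{-1}$ (suitably read for pseudoinverses), then convert $-\langle e^{tA}BB^*e^{tA^*}Q_t^{-1}x,Q_t^{-1}y\rangle_X$ into the Riccati right-hand side via the Lyapunov relation \myref{Lya}. The reduction of the two assertions to one through $\langle P_V(t)x,y\rangle_H=\langle Q_t^{-1}x,y\rangle_X$ on $R(Q_t)$ matches the paper's ``completely similar'' remark, and the Lyapunov conversion is identical to the paper's final computation. The domain mismatch you flag between $D((AQ_t)^*)$ and $D(AQ_t)$ in that last step is real, but the paper's own proof applies \myref{Lya} at $Q_t^{-1}x$ with the same lack of ceremony, so I do not count it against you.

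The genuine gap is in the differentiation step. You differentiate $s\mapsto\langle Q_s^{-1}x,y\rangle_X$ directly in $X$, which forces you to (a) have $x\in R(Q_s)$ for all $s$ in a two-sided neighbourhood of $t$, and (b) establish continuity of $s\mapsto Q_s^{-1}x$ in $X$. Neither follows from the results you cite. Proposition \ref{monotQt} controls the ranges of the \emph{square roots}: under Assumption \ref{NC} it gives $R(Q_s^{1/2})=H$ for all $s\ge T_0$, but says nothing about the full ranges $R(Q_s)$, which need not be nested or coincide for different $s$; so $\langle Q_s^{-1}x,y\rangle_X$ may be undefined for $s\ne t$ and your target identity is then not even well posed. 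Likewise, the continuity available from Proposition \ref{valfun}(iii)(b) together with Lemmas \ref{composQt} and \ref{composQt2} is that of $s\mapsto P_V(s)x$ in $H$, i.e.\ of $s\mapsto Q_\infty^{1/2}Q_s^{-1}x$ in $X$; since $Q_\infty^{-1/2}$ is unbounded, this cannot be ``transferred back'' to continuity of $Q_s^{-1}x$ itself in $X$. The paper's proof is engineered precisely to dodge both obstacles: it differentiates $s\mapsto\langle P_V(s)x,y\rangle_H$, where $P_V(s)\in\Lc(H)$ is the globally defined bounded extension, writes the increment as $P_V(t+h)\,(Q_t-Q_{t+h})Q_t^{-1}x$ so that the pseudoinverse is only ever evaluated at the fixed time $t$, observes that $(Q_t-Q_{t+h})Q_t^{-1}x=-\int_t^{t+h}e^{rA}BB^*e^{rA^*}Q_t^{-1}x\,\ud r$ lands in $H$ by Assumption \ref{NC}, and then passes to the limit using only the strong continuity of $P_V(\cdot)y$ in $H$ and the boundedness of $Q_\infty^{-1/2}e^{tA}$. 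If you systematically replace $\langle Q_s^{-1}x,y\rangle_X$ by $\langle P_V(s)x,y\rangle_H$ and arrange the pairing so that every $s$-dependent factor appears as $Q_\infty^{-1/2}P_V(s)(\cdot)$, your computation closes with the ingredients you already list; as written, the limit passage is not justified.
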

\begin{proof}


Fix $t\ge T_0$ and $x\in R(Q_t)$; then $P_V(t)x\in R(Q_\infty)$ since, using (\ref{formaP2}), $P_V(t)x = Q_\infty Q_t^{-1}x$ for all $x \in R(Q_t)$. Moreover from the definition of $Q_t$ and Assumption \ref{NC} it follows that $[\ker Q_t]^\perp$ is constant in $t$ for $t\ge T_0$, so that $Q_s^{-1}Q_s$ reduces to the identity on $[\ker Q_t]^\perp$ for $s$ and $t$ greater than $T_0$. Hence for $h \ne 0$ sufficiently small we can write for $x,y\in R(Q_t)$
\begin{eqnarray*}
\lefteqn{\left\langle \frac{P_V(t+h)-P_V(t)}{h}\,x,y \right\rangle_H = \left\langle \frac{P_V(t+h)- Q_\infty Q_{t+h}^{-1}Q_{t+h}Q_t^{-1}}{h}\,x,y \right\rangle_H }\\
& & = \left\langle \frac{P_V(t+h)[I-Q_{t+h}Q_t^{-1}]}{h}\,x,y \right\rangle_H = \left\langle P_V(t+h)\frac{[Q_t-Q_{t+h}]}{h}\,Q_t^{-1}x,y \right\rangle_H \\
& & = \left\langle Q_\infty^{-1/2}\frac{[Q_t-Q_{t+h}]}{h}\,Q_t^{-1}x,Q_\infty^{-1/2}P_V(t+h)y \right\rangle_X\,.
\end{eqnarray*}
Now we easily deduce, since $Q_\infty^{-1/2}e^{tA} \in {\cal L}(X)$ by Assumption \ref{NC},
\begin{eqnarray*}
Q_\infty^{-1/2}\frac{[Q_t-Q_{t+h}]}{h}\,Q_t^{-1}x & = & -\frac{1}{h} \int_t^{t+h}Q_\infty^{-1/2}e^{sA}BB^*e^{sA^*}Q_t^{-1}x \,\ud s  \\
& \to &  - Q_\infty^{-1/2}e^{tA}BB^*e^{tA^*}Q_t^{-1}x\quad \textrm{in } X \textrm{ as } h\to 0^+,
\end{eqnarray*}
so that, since $t\mapsto Q_\infty^{-1/2}P_V(t)y$ is continuous by Proposition \ref{valfun} (iii)(b), we readily obtain
\begin{eqnarray*}
\lim_{h\to 0} \left\langle \frac{P_V(t+h)-P_V(t)}{h}\,x,y \right\rangle_H  & = &
 - \langle Q_\infty^{-1/2}e^{tA}BB^*e^{tA^*} Q_t^{-1}x,Q_\infty^{-1/2}P_V(t)y\rangle_X \\
 & = & - \langle P_V(t)e^{tA}BB^*e^{tA^*} Q_t^{-1}x,y\rangle_H\,.
\end{eqnarray*}
This shows that
\begin{equation}\label{derP}
\exists \frac{\ud}{\ud t}\langle P_V(t)x,y\rangle_H = - \langle e^{tA}BB^*e^{tA^*} Q_t^{-1}x,P_V(t)y\rangle_H \qquad \forall x,y\in R(Q_t), \qquad \forall t \in [T_0,\infty[\,.
\end{equation}
Finally, using Proposition \ref{qtlyap}, for all $x,y\in R(Q_t)\cap D(A)$ we can compute for every $t\in [T_0, \infty[\,$:
\begin{eqnarray*}
\lefteqn{\frac{\ud}{\ud t}\langle P_V(t)x,y\rangle_H = - \langle Q_\infty^{-1/2}e^{tA}BB^*e^{tA^*} Q_t^{-1}x,Q_\infty^{-1/2}P_V(t)y\rangle_X} \\
& & = - \langle Q_\infty^{-1/2}e^{tA}BB^*e^{tA^*} Q_t^{-1}x,Q_\infty^{-1/2}[Q_\infty Q_t^{-1}]y\rangle_X \\[2mm]
& & = - \langle [AQ_t + (AQ_t)^* + BB^*] Q_t^{-1}x,Q_t^{-1}y\rangle_X \\[2mm]
& & = - \langle Ax,Q_t^{-1}y\rangle_X - \langle Q_t^{-1}x,Ay\rangle_X - \langle B^*Q_t^{-1}x,B^*Q_t^{-1}y\rangle_U \\[2mm]
& & = - \langle Ax,Q_\infty^{-1}P_V(t)y\rangle_X- \langle Q_\infty^{-1}P_V(t)x,Ay\rangle_X - \langle B^*Q_\infty^{-1}P_V(t)x,B^*Q_\infty^{-1} P_V(t)y\rangle_U \,.
\end{eqnarray*}
This completes the proof of the first statement. The proof of the second one
is completely similar and we omit it.
\end{proof}




\subsection{A partial uniqueness result}
\label{SSE:PARTIALUNIQ}

We are not able to prove a satisfactory uniqueness result; here is our statement
which establishes uniqueness in a restricted class
of solutions.

\begin{Theorem}\label{unicitaRicc} Suppose that Assumptions \ref{hp:main} and \ref{NC} hold. Let $P_V(t)$ be defined by \eqref{formaP1}. Let $S(t)$ be an operator defined in $[T_0,+\infty[\,$, with the following properties:
\begin{description}
\item[(i)]  $S(t)\in {\cal L}(H)$, $S(t)=S(t)^{*H}$, $\exists S(t)^{-1}\in {\cal L}(H)$
and the maps $t\mapsto S(t)$, $t\mapsto S(t)^{-1}$ are strongly continuous;
\item[(ii)] $S(t)^{-1}(Q_\infty(D(A^*)) \subseteq D(A)$ for every $t\in [T_0,\infty[\,$;
\item[(iii)]
for every $x \in S(t)^{-1}(Q_\infty(D(A^*)))$ and $t \ge T_0$ the map
$$
h \mapsto \frac1h(S(t+h)S(t)^{-1}-I)x
$$
is bounded in a neighborhood of $0$;
\item[(iv)]
for every $x,y\in S(t)^{-1}(R(Q_\infty))\cap D(A)$ the following equation holds:
$$\frac{\ud}{\ud t}\langle S(t)x,y \rangle_H = -\langle Ax, Q_\infty^{-1}S(t)y\rangle_X - \langle Q_\infty^{-1}S(t)x,Ay\rangle_X - \langle B^*Q_\infty^{-1}S(t)x,B^*Q_\infty^{-1}S(t)y \rangle_U\,;$$
\item[(v)] there exists $t_0\ge T_0$ such that $S(t_0)= P_V(t_0)$.
\end{description}
Then $S(t) \equiv P_V(t)$ in $[T_0,\infty[\,$.
\end{Theorem}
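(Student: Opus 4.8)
The plan is to reduce the uniqueness for our backward-unstable Riccati equation to the \emph{forward} uniqueness for the Lyapunov equation of Proposition \ref{qtlyap}, by passing to inverses. Recall from \eqref{formaP2} that $P_V(t)x=Q_\infty Q_t^{-1}x$ on $R(Q_t)$, so that $P_V(t)^{-1}Q_\infty=Q_t$. Mimicking this, I would introduce
$$N(t):=S(t)^{-1}Q_\infty,$$
a bounded operator from $X$ into $H\subseteq X$. Using (i), the identity $S(t)=S(t)^{*H}=Q_\infty S(t)^*Q_\infty^{-1}$ (i.e. $S(t)Q_\infty=Q_\infty S(t)^*$) gives $N(t)^*=Q_\infty(S(t)^*)^{-1}=S(t)^{-1}Q_\infty=N(t)$, so $N(t)$ is $X$-selfadjoint; by (ii) it maps $D(A^*)$ into $D(A)$, which is exactly the domain condition required of a Lyapunov solution. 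The target is to show that $N(\cdot)$ solves the differential Lyapunov equation \eqref{eq:Lyapdiff} weakly on $[T_0,+\infty[\,$, and then to identify it with $Q_\cdot$ using the matching hypothesis (v).

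The heart of the argument is that $N$ solves the Lyapunov equation. First, hypothesis (iv) is nothing but the statement that $R(t):=Q_\infty^{-1}S(t)$ solves the $X$-picture Riccati equation \eqref{eq:evriccprima} on $S(t)^{-1}(R(Q_\infty))\cap D(A)$: indeed $\langle S(t)x,y\rangle_H=\langle Q_\infty^{-1}S(t)x,y\rangle_X$ whenever $S(t)x\in R(Q_\infty)$, and substituting $R(t)=Q_\infty^{-1}S(t)$ turns (iv) termwise into \eqref{eq:evriccprima}. Formally $N=R^{-1}$, and differentiating $S(t)N(t)=Q_\infty$ yields $N'=-N R' N$; inserting the Riccati right-hand side $R'=-RA-A^*R-RBB^*R$ and using $NR=RN=I$ collapses everything to $N'=AN+NA^*+BB^*$. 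To make this rigorous I would test against $x,y\in D(A^*)$, set $\xi=N(t)x,\ \eta=N(t)y$ (which by (ii) lie in $S(t)^{-1}(R(Q_\infty))\cap D(A)$), and expand the difference quotient through $N(t+h)-N(t)=-S(t+h)^{-1}\bigl(S(t+h)-S(t)\bigr)N(t)$. The strong continuity of $t\mapsto S(t)^{-1}$ from (i) together with the boundedness of $\tfrac1h(S(t+h)S(t)^{-1}-I)$ from (iii) justify passing to the limit and identify it with the Riccati right-hand side rewritten through $N$, from which the Lyapunov triple $AN+NA^*+BB^*$ emerges after the $NR=I$ cancellations.

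It then remains to compare $N(\cdot)$ with $Q_\cdot$. Both solve the forward Lyapunov equation on $[T_0,+\infty[\,$ and coincide at $t_0$ by (v). The forward uniqueness computation of Proposition \ref{qtlyap}, applied to the difference $D=N-Q_\cdot$ via $\tfrac{d}{ds}\langle D(s)e^{(t-s)A^*}x,e^{(t-s)A^*}x\rangle_X=0$ with $t-s\ge0$, gives $N(t)=Q_t$ for all $t\ge t_0$. Equivalently, the variation-of-constants representation $N(t)=e^{(t-t_1)A}N(t_1)e^{(t-t_1)A^*}+Q_{t-t_1}$ yields, for every $t_1\in[T_0,t_0]$, the relation $e^{sA}\bigl[N(t_1)-Q_{t_1}\bigr]e^{sA^*}=0$ for all $s\ge t_0-t_1$. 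Once $N(t)=Q_t$ is established on all of $[T_0,+\infty[\,$, inverting gives $S(t)^{-1}Q_\infty=Q_t=P_V(t)^{-1}Q_\infty$, hence $S(t)^{-1}=P_V(t)^{-1}$ on the dense set $R(Q_\infty)$ and therefore $S(t)=P_V(t)$.

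I expect two points to carry the difficulty. The differentiation of $t\mapsto S(t)^{-1}$ is not automatic in infinite dimensions, and it is precisely to control $\tfrac1h(S(t+h)S(t)^{-1}-I)$ that hypothesis (iii) is imposed; matching the resulting limit with the Riccati right-hand side also needs care because the domains $S(t)^{-1}(R(Q_\infty))\cap D(A)$ move with $t$. The genuinely hard step, however, is the \emph{backward} propagation below the matching time $t_0$: since $A$ generates only a semigroup and no backward semigroup exists, the forward argument delivers merely $e^{sA}\bigl[N(t_1)-Q_{t_1}\bigr]e^{sA^*}=0$, and upgrading this to $N(t_1)=Q_{t_1}$ for $T_0\le t_1<t_0$ requires $\overline{R(e^{sA^*})}\supseteq[\ker Q_\infty]^\perp$, equivalently $\ker e^{sA}\subseteq\ker Q_\infty$. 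This is exactly where the invertibility of $S(t)$ in ${\cal L}(H)$ and the null-controllability factorization $e^{sA}=Q_\infty^{1/2}C_s$ from Assumption \ref{NC} must be exploited, together with the fact that $R\bigl(N(t_1)-Q_{t_1}\bigr)\subseteq[\ker Q_\infty]^\perp$ and the selfadjointness of this difference; this backward step is the technical core of the theorem.
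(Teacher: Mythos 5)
Your proposal follows the paper's proof in all essentials: you pass to $N(t)=S(t)^{-1}Q_\infty$, check boundedness and $X$-selfadjointness from (i), derive the weak differential Lyapunov equation by testing on $Q_\infty(D(A^*))$ and controlling the difference quotient of $S(\cdot)^{-1}$ through the decomposition $N(t+h)-N(t)=-S(t+h)^{-1}(S(t+h)-S(t))N(t)$ together with (iii) and the strong continuity in (i), and then identify $N(t)$ with $Q_t$ via (v) and the uniqueness mechanism of Proposition \ref{qtlyap}, finally inverting back on the dense set $R(Q_\infty)$. This is exactly the paper's route.

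The one place where you go beyond the paper is the backward propagation from $t_0$ down to $T_0$, and there your diagnosis is right but your proposed cure is not. The forward computation $\frac{\ud}{\ud s}\langle D(s)e^{(t-s)A^*}x,e^{(t-s)A^*}x\rangle_X=0$ applied to $D=N-Q_\cdot$ gives $D(t)=0$ only for $t\ge t_0$; for $T_0\le t<t_0$ it yields only $e^{(t_0-t)A}D(t)e^{(t_0-t)A^*}=0$ (the paper's proof silently asserts coincidence on all of $[T_0,\infty[$ at this point). Since $D(t)$ is selfadjoint, vanishes on $\ker Q_\infty$ and has range in $[\ker Q_\infty]^\perp$ (using $\ker Q_\infty\subseteq\ker Q_t$ from \eqref{eq:kerQtinclusions}), upgrading to $D(t)=0$ requires, as you say, $[\ker Q_\infty]^\perp\subseteq\overline{R(e^{(t_0-t)A^*})}$, i.e. $\ker e^{(t_0-t)A}\subseteq\ker Q_\infty$. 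But this does not follow from Assumption \ref{NC} in the way you indicate: by Proposition \ref{Pr:B.1}, the factorization $e^{T_0A}=Q_{T_0}^{1/2}C$ is equivalent to $\|e^{T_0A^*}x\|_X\le k\|Q_{T_0}^{1/2}x\|_X$, hence to $\ker Q_{T_0}\subseteq\ker e^{T_0A^*}$ --- the adjoint of, and the reverse of, the inclusion you need, and the two are unrelated in general. So either you must supply a separate argument (e.g. injectivity of $e^{sA}$, which holds in the analytic/diagonal examples but not for every $C_0$-semigroup), or restrict the conclusion to $t\ge t_0$. You correctly isolated a subtlety that the paper's own proof elides, but the closing step you sketch for it would not go through as written.
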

\begin{proof}
For fixed $t\ge T_0$, the above equation holds in particular for every $x,y\in S(t)^{-1}(Q_\infty(D(A^*)))$. Set now $\xi=S(t)x$, $\eta=S(t)y$: then we have $\xi,\eta\in Q_\infty(D(A^*))$ and, replacing $x$ and $y$ into (iii) above, we get
\begin{eqnarray*}
\lefteqn{\left[\frac{\ud}{\ud\tau}\langle S(\tau)S(t)^{-1}\xi,S(t)^{-1}\eta \rangle_H\right]_{\tau=t} }\\[1mm]
& & = -\langle AS(t)^{-1}\xi, Q_\infty^{-1}\eta\rangle_X - \langle Q_\infty^{-1}\xi,AS(t)^{-1}\eta\rangle_X - \langle B^*Q_\infty^{-1}\xi,B^*Q_\infty^{-1}\eta \rangle_U\,.
\end{eqnarray*}
Now we want to compute, whenever possible, $\frac{\ud}{\ud t}\< S(t)^{-1}\xi,\eta \>_H\,$.
We have
\begin{eqnarray*}
\<\frac{S(t+h)^{-1}- S(t)^{-1}}{h}\xi,\eta \>_H & = & \<S(t+h)^{-1}\left[\frac{S(t+h)- S(t)}{h}\right]S(t)^{-1}\xi,\eta \>_H \\[2mm]
&= & \<\left[\frac{S(t+h)- S(t)}{h}\right]S(t)^{-1}\xi,[S(t+h)^{-1}-S(t)^{-1}]\eta \>_H \\[2mm]
& & + \<\left[\frac{S(t+h)- S(t)}{h}\right]S(t)^{-1}\xi,S(t)^{-1}\eta \>_H.
%
%
%
\end{eqnarray*}
The second term clearly converges to
$$\left[\frac{\ud}{\ud\tau}\langle S(\tau)S(t)^{-1}\xi,S(t)^{-1}\eta \rangle_H\right]_{\tau=t}\,,$$ whereas the first term goes to $0$: indeed its first factor is bounded by assumption (iii), while the second one goes to $0$ in view of the strong continuity
of assumption (i). Hence we have
$$
\frac{\ud}{\ud t}\< S(t)^{-1}\xi,\eta \>_H
=
-\langle AS(t)^{-1}\xi, Q_\infty^{-1}\eta\rangle_X - \langle Q_\infty^{-1}\xi,AS(t)^{-1}\eta\rangle_X - \langle B^*Q_\infty^{-1}\xi,B^*Q_\infty^{-1}\eta \rangle_U\,.
$$
Now set $u_1=Q_\infty^{-1}\xi$, $v_1=Q_\infty^{-1}\eta$: by definition of pseudoinverses, we have $u_1,v_1\in [\ker Q_\infty]^\perp$ and, since  $\xi,\eta\in Q_\infty(D(A^*))$
there exist $u_0,v_0\in \ker Q_\infty$ such that $u:=u_1+u_0$, $v:=v_1+v_0$ belong to $D(A^*)$ and, of course, $Q_\infty u = \xi$ and $Q_\infty v = \eta$. The above equation then becomes
$$\frac{\ud}{\ud t}\langle S(t)^{-1}Q_\infty u,Q_\infty v\rangle_H =\langle AS(t)^{-1}Q_\infty u, v_1\rangle_X +\langle u_1,AS(t)^{-1}Q_\infty v\rangle_X +\langle B^*u_1,B^*v_1 \rangle_U \,.$$
Observe now that, by Proposition \ref{pr:Qt}-(ii), we have $B^*u_0=B^*v_0=0$. In addition, using assumption (ii) and the fact that $H\subseteq [\ker Q_\infty)]^\perp$, we get $S(t)^{-1}Q_\infty u =S(t)^{-1}\xi \in S(t)^{-1}(Q_\infty(D(A^*))) \subset D(A)\cap [\ker Q_\infty)]^\perp$, so that, by Lemma \ref{proreg}-(iv), $AS(t)^{-1}Q_\infty u \in [\ker Q_\infty)]^\perp$.
Hence we may write, for all $u,v\in D(A^*)$,
$$\frac{\ud}{\ud t}\langle S(t)^{-1}Q_\infty u,Q_\infty v\rangle_H =\langle AS(t)^{-1}Q_\infty u, v\rangle_X +\langle u,AS(t)^{-1}Q_\infty v\rangle_X +\langle B^*u,B^*v \rangle_U\,,$$
i.e.
$$\frac{\ud}{\ud t}\langle S(t)^{-1}Q_\infty u,v\rangle_X =\langle S(t)^{-1}Q_\infty u, A^*v\rangle_X +\langle A^*u,S(t)^{-1}Q_\infty v\rangle_X +\langle B^*u,B^*v \rangle_U, \ \ \forall u,v\in D(A^*).$$
This proves that $S(t)^{-1}Q_\infty$ solves the Lyapunov differential equation \myref{eq:Lyapdiff} in weak sense. Note that $S(t)^{-1}Q_\infty \in {\cal L}(X)$, since,
using also Lemma \ref{HH}-(iv),
\begin{eqnarray*}\|S(t)^{-1}Q_\infty x\|_X & = & \|Q^{1/2}_\infty S(t)^{-1}Q_\infty x\|_H\le \|Q^{1/2}_\infty\|_{{\cal L}(X)}\|S(t)^{-1}\|_{{\cal L}(H)}\|Q_\infty x\|_H \\
& \le & \|Q^{1/2}_\infty\|_{{\cal L}(X)}^2 \|S(t)^{-1}\|_{{\cal L}(H)} \|x\|_X\,,
\end{eqnarray*}
and it is selfadjoint, too, in view of
$$\langle S(t)^{-1}Q_\infty x,y\rangle_X =\langle S(t)^{-1}Q_\infty x,Q_\infty y\rangle_H = \langle Q_\infty x,S(t)^{-1}Q_\infty y\rangle_H = \langle x,S(t)^{-1}Q_\infty y\rangle_X \ \ \forall x,y\in X.$$
Now we recall that by \myref{formaP2} it follows that $P_V(t_0)^{-1} x = Q_{t_0}Q_\infty^{-1}x$ for every $x\in R(Q_\infty)$; then from the assumption $S(t_0)= P_V(t_0)$ we deduce
$$S(t_0)^{-1}Q_\infty z = P_V(t_0)^{-1}Q_\infty z= Q_{t_0}z \quad \forall z\in X.$$
Hence the operators $S(t)^{-1}Q_\infty$ and $Q_t$ solve the Lyapunov equation and coincide for $t=t_0$: thus they must coincide in $[T_0, \infty[$:
$$S(t)^{-1}Q_\infty = Q_t \qquad \forall t\ge T_0.$$
Thus for $x \in R(Q_\infty)$, i.e. $x= Q_\infty z$ with $z \in [\ker Q_\infty]^\perp$,
we may write
$$
S(t)^{-1}x = S(t)^{-1} Q_\infty z =
Q_t z=Q_t Q_\infty^{-1}x = P_V(t)^{-1}x.
$$
By density, we get $S(t)^{-1}x = P_V(t)^{-1}x$ for every $x\in H$, and finally $S(t)z\equiv P_V(t)z$ for every $z\in H$.
\end{proof}

\subsection{The selfadjoint commuting case}
\label{SSE:COMMUTING}

We consider now the case where $A$ is selfadjoint and commutes with $BB^*$. As a consequence, $A$ commutes with $Q_\infty$ and is selfadjoint in $H$, too. More specifically, from Proposition \ref{pr:Qt}-(v) we know that
$BB^*Q_\infty^{-1}=-2A$; hence in \myref{RiccatiHprima} the term
$$
-\< B^*Q_\infty^{-1}P(t)x,B^*Q_\infty^{-1}P(t)y \>_U=
-\< BB^*Q_\infty^{-1}P(t)x,Q_\infty^{-1}P(t)y \>_X
$$
can be simply rewritten as $2 \<AP(t)x,Q_\infty^{-1}P(t)y \>_X\,$; if in addition $AP(t)x\in H$, it just becomes $ \<AP(t)x,P(t)y \>_H\,$. Similarly, if $Ax, Ay \in H$, in \myref{RiccatiHprima} the terms $\< Ax, Q_\infty^{-1} P(t)y\>_X$ and $\< Q_\infty^{-1}P(t)x,Ay\>_X$ can be rewritten as $\< Ax, P(t)y\>_H$ and $\<P(t)x,Ay\>_H\,$.
Hence, in this case, we can rewrite \myref{RiccatiHprima} as
\begin{equation}\label{eq:commuting}
\frac{\ud}{\ud t}\< P(t)x,y \>_H
= -\< Ax, P(t)y\>_H - \<P(t)x,Ay\>_H + 2 \< AP(t)x,P(t)y \>_H.
\end{equation}
which makes sense for $x,y \in \overline D_P(t)$, where
$$
\overline D_P(t):=\left\{ z \in D(A): \ Az \in H,\ P(t)z \in D(A),\ AP(t)z \in H \right\}.
$$
We give now some statements about the solutions to this equation. The first one
(Theorem \ref{th:commuting1}) is an existence result under the null controllability assumption.
The subsequent ones (Theorems \ref{th:commuting2} and \ref{th:commuting3}) are uniqueness-type results and do not need null controllability.

\begin{Theorem}\label{th:commuting1}
Suppose that Assumptions \ref{hp:main} and \ref{NC} hold. Assume that $A$ is selfadjoint in $X$ and commutes with $BB^*$; let $K\in \Lc(H)$ be selfadjoint in $H$, non-negative, such that $AKA^{-1}\in \Lc(H)$. Let moreover $T_1\ge T_0$ be such that $(I-e^{tA}Ke^{tA})$ is invertible for each $t>T_1$.
Then $(I-e^{tA}Ke^{tA})^{-1}$ solves \eqref{eq:commuting} in $\,]T_1,\infty[\,$.
\end{Theorem}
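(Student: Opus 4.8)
The candidate solution is $P(t):=(I-W(t))^{-1}$ with $W(t):=e^{tA}Ke^{tA}$, and the plan is to compute the weak derivative of $P(t)$ in $H$ directly and match it term by term to the right-hand side of \eqref{eq:commuting}. First I record the structural facts that make the algebra clean. Since $A$ is selfadjoint in $X$ and commutes with $BB^*$, it commutes with $Q_\infty$ and is selfadjoint in $H$, so $\{e^{tA}\}$ is a selfadjoint $C_0$-semigroup in $H$, $A^{-1}\in\Lc(H)$, and $\|e^{tA}\|_{\Lc(H)}\le Me^{-\omega t}\to 0$ (using that $e^{tA}$ commutes with $Q_\infty^{1/2}$). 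Because $K$ and $e^{tA}$ are selfadjoint in $H$, $W(t)$ is selfadjoint and non-negative in $H$; invertibility of $I-W(t)$ for $t>T_1$ together with $W(t)\ge 0$ and $\|W(t)\|_{\Lc(H)}\to 0$ as $t\to\infty$ forces $I-W(t)>0$ throughout $\,]T_1,\infty[\,$, so $P(t)\in\Lc_+(H)$ and $P(t)$, $P(t)^{-1}$ are selfadjoint in $H$. Strong continuity of $t\mapsto P(t)$ I would deduce from the strong continuity of $W$ (which follows from $W(t+h)=e^{hA}W(t)e^{hA}$ and $e^{hA}\to I$ strongly) together with local boundedness of $\|P(t)\|_{\Lc(H)}=(1-\|W(t)\|_{\Lc(H)})^{-1}$.

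Next I set up the differentiation. The hypothesis $AKA^{-1}\in\Lc(H)$ gives, for $v\in D(A)$ (with $A$ the part of $A$ in $H$), that $Kv\in D(A)$ and hence $W(t)v\in D(A)$ with $AW(t)v=e^{tA}(AKA^{-1})e^{tA}Av$; thus $W(t)$ maps $D(A)$ into $D(A)$ and, for $z\in D(A)$, the $H$-valued curve $s\mapsto W(s)z$ is $C^1$ with $\frac{\ud}{\ud t}W(t)z=AW(t)z+W(t)Az=e^{tA}\big[AKA^{-1}+K\big]e^{tA}Az$. I would then run the difference-quotient argument exactly as in the proof of Theorem \ref{PsoldiR}: using $P(t+h)-P(t)=P(t+h)[W(t+h)-W(t)]P(t)$ and selfadjointness of $P(t+h)$ in $H$,
$$\<\tfrac{P(t+h)-P(t)}{h}x,y\>_H=\<\tfrac{W(t+h)-W(t)}{h}P(t)x,\,P(t+h)y\>_H,$$
and, since $P(t)x\in D(A)$ is built into $\overline D_P(t)$ and $P(t+h)\to P(t)$ strongly, letting $h\to 0$ yields
$$\tfrac{\ud}{\ud t}\<P(t)x,y\>_H=\<\big[AW(t)+W(t)A\big]P(t)x,\,P(t)y\>_H.$$

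Now comes the purely algebraic matching, which is the heart. Writing $W(t)=I-P(t)^{-1}$ and using $W(t)P(t)x=P(t)x-x$ gives
$$\big[AW(t)+W(t)A\big]P(t)x=2AP(t)x-Ax-P(t)^{-1}AP(t)x.$$
Pairing with $P(t)y$ in $H$ and using that $A$, $P(t)$, $P(t)^{-1}$ are selfadjoint in $H$ (with $x,y,P(t)x\in D(A)$ and $Ax,Ay,AP(t)x\in H$ because $x,y\in\overline D_P(t)$), the three terms become $2\<AP(t)x,P(t)y\>_H$, $-\<Ax,P(t)y\>_H$, and $-\<P(t)^{-1}AP(t)x,P(t)y\>_H=-\<AP(t)x,y\>_H=-\<P(t)x,Ay\>_H$. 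Their sum is exactly the right-hand side of \eqref{eq:commuting}, completing the verification.

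The main obstacle is not this computation but the verification that $\overline D_P(t)$ is dense in $H$, i.e. that $P(t)$ preserves the relevant domain so that $AP(t)x\in H$ on a dense set. The natural route is the intertwining identity $AP(t)A^{-1}=(I-AW(t)A^{-1})^{-1}=(I-e^{tA}(AKA^{-1})e^{tA})^{-1}$, which would place $AP(t)A^{-1}\in\Lc(H)$ and hence give $\{z\in D(A):Az\in H\}\subseteq\overline D_P(t)$, dense by Lemma \ref{dens}. The delicate point is the invertibility of $I-e^{tA}(AKA^{-1})e^{tA}$ in $\Lc(H)$ for $t>T_1$: injectivity is immediate (if $g=AW(t)A^{-1}g$ then $A^{-1}g\in\ker(I-W(t))=\{0\}$), but surjectivity with a bounded inverse is not automatic for a merely $C_0$, non-analytic semigroup, and this is precisely where the regularity assumption $AKA^{-1}\in\Lc(H)$ is indispensable. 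I expect to close this by combining the easy injectivity with the boundedness furnished by $AKA^{-1}\in\Lc(H)$ and the decay $\|e^{tA}\|_{\Lc(H)}\to 0$, which already yields invertibility by a Neumann series for large $t$, and then transferring this to all of $\,]T_1,\infty[\,$ through the given invertibility of $I-W(t)$ on the whole interval.
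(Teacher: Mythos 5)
Your proof is correct and follows essentially the same route as the paper's: differentiate $(I-e^{tA}Ke^{tA})^{-1}$ on the dense set $D=\{z\in D(A)\cap H:\ Az\in H\}$ and use $e^{tA}Ke^{tA}=I-P(t)^{-1}$ to split the resulting quadratic expression into the three terms of \eqref{eq:commuting}; the paper's decomposition $-AW-WA=-A(W-I)-(W-I)A-2A$ is exactly your identity $[AW+WA]P=2AP-A-P^{-1}AP$ after composing with $P$ and pairing in $H$. The one place you go beyond the paper is the verification that $\overline{P}(t)$ maps $D$ into $D$ (the paper simply declares $\overline{D}_{\overline{P}}(t)=D$ without checking $A\overline{P}(t)x\in H$), and there your intertwining idea $A\overline{P}(t)A^{-1}=(I-e^{tA}(AKA^{-1})e^{tA})^{-1}$ is the right use of the hypothesis $AKA^{-1}\in \Lc(H)$ --- but note that your final ``transfer'' of invertibility from large $t$ down to all of $\,]T_1,\infty[\,$ is still only a plan (strong continuity of $t\mapsto e^{tA}$ does not make the invertible set open in $t$), so that step would need an actual argument if you want a proof strictly more complete than the paper's.
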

\begin{proof}
It is clear that $T_1$ exists, since $e^{tA}$ is of negative type. Consider the set
$$D=\{z\in D(A)\cap H: \ Az\in H\};$$
it is dense in $H$, since it contains $Q_\infty(D(A))$, which is dense in $H$ by Lemma \ref{dens}: indeed, if $x\in Q_\infty(D(A))$, then $x=Q_\infty z$ with $z\in D(A)$, so that $Ax=AQ_\infty z= Q_\infty Az\in H$. Then, setting $\overline{P}(t)=(I-e^{tA}Ke^{tA})^{-1}$, we can write for $x,y\in D$ and $t>T_1$
\begin{eqnarray*}
\lefteqn{\frac{\ud}{\ud t} \langle \overline{P}(t)x,y\rangle_H = \frac{\ud}{\ud t} \langle (I-e^{tA}Ke^{tA})^{-1}x,y\rangle_H} \\
& & = -\langle (I-e^{tA}Ke^{tA})^{-1} (-Ae^{tA}Ke^{tA}-e^{tA}KAe^{tA})(I-e^{tA}Ke^{tA})^{-1} x,y\rangle_H \\[1mm]
& & = - \langle (I-e^{tA}Ke^{tA})^{-1} (-Ae^{tA}Ke^{tA}+A-e^{tA}Ke^{tA}A+A-2A)(I-e^{tA}Ke^{tA})^{-1} x,y\rangle_H \\[1mm]
& & = - \langle (I-e^{tA}Ke^{tA})^{-1} (-A(e^{tA}Ke^{tA}-I)-(e^{tA}Ke^{tA}-I)A-2A)(I-e^{tA}Ke^{tA})^{-1} x,y\rangle_H \\[1mm]
& & = - \langle (I-e^{tA}Ke^{tA})^{-1} Ax,y\rangle_H - \langle A(I-e^{tA}Ke^{tA})^{-1} x,y\rangle_H\\[1mm]
& & \qquad \qquad \qquad + \langle 2A(I-e^{tA}Ke^{tA})^{-1} x, (I-e^{tA}Ke^{tA})^{-1}y\rangle_H \\[1mm]
& & = -\langle Ax, \overline{P}(t)y\rangle_H -\langle \overline{P}(t)x,Ay\rangle_H+\langle 2A\overline{P}(t)x,\overline{P}(t)y\rangle_H.
\end{eqnarray*}
This shows that $\overline{P}$ solves \eqref{eq:commuting} with $\overline{D}_{\overline{P}}(t)=D$ for every $t>T_1$. Note that
$$\overline{P}(t) = (I-e^{(t-T_1)A}Le^{(t-T_1)A})^{-1}, \qquad L=e^{T_1A}Ke^{T_1A}=I-\overline{P}(T_1)^{-1}.$$
\end{proof}
The second statement is a uniqueness result.
\begin{Theorem}\label{th:commuting2}
Suppose that Assumption \ref{hp:main} holds. Assume that $A$ is selfadjoint in $X$ and commutes with $BB^*$. Let moreover $S:[T^*,\infty[\,\to \Lc(H)$ be a strongly continuous, selfadjoint, non-negative operator, such that: \\[1mm]
{\bf (i)} $S(t)$ is invertible for $t\ge T^*$ and $S(T^*)=S^*$;\\[1mm]
{\bf (ii)} $S(t)$ solves \eqref{eq:commuting} in $\,]T^*,\infty[\,$.\\[1mm]
Then $S(t)=(I-e^{(t-T^*)A}Le^{(t-T^*)A})^{-1}$ for every $t\ge T^*$, where $L=I-(S^*)^{-1}$.
\end{Theorem}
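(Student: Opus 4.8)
The plan is to exploit invertibility in order to linearize the problem: although the Riccati equation \eqref{eq:commuting} is nonlinear and, as the paper stresses, has no uniqueness in general, the equation satisfied by the \emph{inverse} $R(t):=S(t)^{-1}$ turns out to be \emph{linear}. So I would first derive, in weak form on a suitable dense domain, the identity
\begin{equation*}
\frac{\ud}{\ud t}\<R(t)\xi,\eta\>_H=\<AR(t)\xi,\eta\>_H+\<R(t)\xi,A\eta\>_H-2\<A\xi,\eta\>_H,
\end{equation*}
that is, $R'(t)=AR(t)+R(t)A-2A$ at the operator level. Here it is crucial that, in the commuting case, $A$ is selfadjoint in $H$, so that the three terms on the right-hand side of \eqref{eq:commuting} can be symmetrized after passing to the inverse.

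To obtain this I would repeat the differentiation-of-the-inverse computation from the proof of Theorem \ref{unicitaRicc}: starting from \eqref{eq:commuting} evaluated at the test points $x=S(t)^{-1}\xi$, $y=S(t)^{-1}\eta$, and using the difference-quotient identity $S(t+h)^{-1}-S(t)^{-1}=-S(t+h)^{-1}[S(t+h)-S(t)]S(t)^{-1}$, one splits the increment of $R$ into a term that converges to the expression above and a remainder that vanishes by strong continuity. Since $S(t)$ and $S(t)^{-1}$ are selfadjoint in $H$, so is $R(t)$, a fact I would use for polarization at the end.

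Next I would solve the linear equation. The constant operator $I$ is a stationary solution, since $AI+IA-2A=0$; hence $D(t):=R(t)-I$ solves the homogeneous Lyapunov-type equation $D'(t)=AD(t)+D(t)A$ weakly, with $D(T^*)=(S^*)^{-1}-I=-L$. For the uniqueness of this homogeneous equation I would reproduce the conjugation argument of Proposition \ref{qtlyap}: for fixed $t\ge T^*$ and $x$ in the domain, the function $\tau\mapsto\<D(\tau)e^{(t-\tau)A}x,e^{(t-\tau)A}x\>_H$ has vanishing derivative on $[T^*,t]$ (only the \emph{forward} semigroup $e^{(t-\tau)A}$, with $t-\tau\ge 0$, is used, which is essential because $A$ generates merely a $C_0$-semigroup and not a group), whence $\<D(t)x,x\>_H=\<D(T^*)e^{(t-T^*)A}x,e^{(t-T^*)A}x\>_H$. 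Polarization then gives $D(t)=e^{(t-T^*)A}D(T^*)e^{(t-T^*)A}=-e^{(t-T^*)A}Le^{(t-T^*)A}$, so that $R(t)=S(t)^{-1}=I-e^{(t-T^*)A}Le^{(t-T^*)A}$; inverting (which is legitimate, as $R(t)=S(t)^{-1}$ is invertible with inverse $S(t)$) yields the claim.

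The main obstacle is the rigorous differentiation of $t\mapsto S(t)^{-1}$ needed in the first step. In Theorem \ref{unicitaRicc} this was handled by \emph{assuming} extra regularity (strong continuity of $S^{-1}$ and local boundedness of the difference quotients $\tfrac1h(S(t+h)S(t)^{-1}-I)$); here these are not among the hypotheses, so the delicate point is to extract control on $\tfrac{1}{h}[S(t+h)-S(t)]S(t)^{-1}\xi$, together with the continuity of $S^{-1}$, from the strong continuity of $S$, invertibility, and the equation itself, so that the remainder term in the difference quotient genuinely vanishes. Everything afterwards, namely recognizing $I$ as a particular solution and invoking the Lyapunov uniqueness, is routine once Proposition \ref{qtlyap} is in hand.
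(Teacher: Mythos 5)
Your proposal is correct and follows essentially the same route as the paper: invert $S(t)$, observe that $U(t)=S(t)^{-1}$ satisfies the \emph{linear} equation $U'=AU+UA-2A$ on a dense domain, and solve it explicitly --- the paper does this by variation of constants (recognizing $-2e^{(t-s)A}Ae^{(t-s)A}$ as $\frac{\ud}{\ud s}e^{2(t-s)A}$), while you subtract the stationary solution $I$ and invoke the homogeneous Lyapunov uniqueness via conjugation, which amounts to the same computation. As for the differentiation of $t\mapsto S(t)^{-1}$ that you flag as the delicate point: the paper's own proof performs it with no more justification than you offer, simply writing $\frac{\ud}{\ud t}\<U(t)x,y\>_H=-\bigl[\frac{\ud}{\ud t}\<S(t)\xi,\eta\>_H\bigr]_{\xi=U(t)x,\,\eta=U(t)y}$ on the dense set $S(t)(\overline{D}_S(t))$, so you are not missing anything that the paper supplies.
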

\begin{proof} Set again $D=\{z\in D(A)\cap H: \ Az\in H\}$ and define
$$U(t):= S(t)^{-1}, \qquad t\ge T^*.$$
Obviously, $U(T^*)=(S^*)^{-1}$. Moreover, for every $t>T^*$ and $x,y$ in the set $S(t)(\overline{D}_S(t))$, which is dense in $H$, we have by (ii)
\begin{eqnarray*}
\frac{\ud}{\ud t} \langle U(t)x,y\rangle_H & = &  -\left[\frac{\ud}{\ud t} \langle S(t)\xi,\eta\rangle_H\right]_{\xi=U(t)x,\ \eta=U(t)y} \\[1mm]
& = & \langle A\xi,S(t)\eta\rangle_H +\langle S(t)\xi,A\eta\rangle_H- 2\langle AS(t)\xi,S(t)\eta\rangle_H \\[1mm]
& = & \langle AU(t)x,y\rangle_H +\langle x,AU(t)y\rangle_H- 2\langle Ax,y\rangle_H \\[1mm]
& = & \langle U(t)x,Ay\rangle_H +\langle Ax,U(t)y\rangle_H- 2\langle Ax,y\rangle_H \,.
\end{eqnarray*}
This is a linear equation, governed by the semigroup $P \mapsto e^{tA}Pe^{tA}$: by the variation of constants formula we have for each $x,y\in S(t)(\overline{D}_S(t))$
\begin{eqnarray*}
U(t)x & = & e^{(t-T^*)A}U(T^*)e^{(t-T^*)A}x - 2\int_{T^*}^t e^{(t-s)A}Ae^{(t-s)A}x\,ds \\[2mm]
& = & e^{(t-T^*)A}(1-L)e^{(t-T^*)A}x +\int_{T^*}^t \frac{\ud}{\ud s} e^{2(t-s)A}x \,ds \\[2mm]
& = & e^{(t-T^*)A}(1-L)e^{(t-T^*)A}x -e^{2(t-T^*)A}x+x = (I-e^{(t-T^*)A}Le^{(t-T^*)A})x.
\end{eqnarray*}
By density this shows that
$$S(t)^{-1} = U(t)=I-e^{(t-T^*)A}Le^{(t-T^*)A},$$
which is our claim.
\end{proof}

In the next result we look at non-invertible solutions obtained through projections.

\begin{Theorem}\label{th:commuting3} Suppose that Assumption \ref{hp:main} holds. Assume that
$A$ is selfadjoint in $X$ and commutes with $BB^*$; let $S:[T^*,\infty[\,\to \Lc(H)$ be a strongly continuous, selfadjoint, non-negative operator, which solves \eqref{eq:commuting} in $\,]T^*,\infty[\,$. Let moreover $P\in {\cal L}(H)$ be an orthogonal projection, such that $AP=PA$ and $S(t)P(D(A)\cap H) \subseteq D(A)$ for every $t>T^*$. \\
Then $PS(t)P$ solves \eqref{eq:commuting} in $\,]T^*,\infty[\,$ if and only if $S(t)P(D(A)\cap H)\subseteq R(P)$ for every $t>T^*$.
\end{Theorem}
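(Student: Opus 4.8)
The plan is to reduce both implications to a single ``master identity''. Write $T(t):=PS(t)P$ and test on the dense domain $D=\{z\in D(A)\cap H:\ Az\in H\}$, which is dense in $H$ exactly as in the proof of Theorem \ref{th:commuting1} (it contains $Q_\infty(D(A))$). Since $P$ is an orthogonal projection in $H$ (so $P=P^{*H}$, $P^2=P$) and $AP=PA$, one has $\<T(t)x,y\>_H=\<S(t)Px,Py\>_H$, and for $x,y\in D$ the vectors $Px,Py$ again lie in $D$ while $S(t)Px,S(t)Py\in D(A)$ by the hypothesis $S(t)P(D(A)\cap H)\subseteq D(A)$. Differentiating and inserting the equation \eqref{eq:commuting} for $S$ with test vectors $Px,Py$, the commuting relations turn the first two terms into $-\<Ax,T(t)y\>_H-\<T(t)x,Ay\>_H$ exactly and the quadratic term into $2\<AS(t)Px,S(t)Py\>_H$; comparing with $2\<AT(t)x,T(t)y\>_H=2\<AS(t)Px,PS(t)Py\>_H$ gives
\begin{equation*}
\frac{\ud}{\ud t}\<T(t)x,y\>_H
=\Big[-\<Ax,T(t)y\>_H-\<T(t)x,Ay\>_H+2\<AT(t)x,T(t)y\>_H\Big]
+2\<AS(t)Px,(I-P)S(t)Py\>_H .
\end{equation*}
Thus $T(t)=PS(t)P$ solves \eqref{eq:commuting} on $D$ if and only if the extra term $\<AS(t)Px,(I-P)S(t)Py\>_H$ vanishes for all $x,y\in D$ and all $t>T^*$.

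The implication $(\Leftarrow)$ is then immediate: if $S(t)P(D(A)\cap H)\subseteq R(P)$ then $(I-P)S(t)Py=0$, the extra term disappears, and $T(t)$ satisfies \eqref{eq:commuting}.

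For the converse I would specialize to $x=y$, which already suffices. Put $\xi:=S(t)Px$ and $\zeta:=(I-P)\xi\in\ker P$. Since $R(P)\perp\ker P$ the component $PA\xi\in R(P)$ is orthogonal to $\zeta$, and $AP=PA$ gives $(I-P)A\xi=A\zeta$; hence $\<A\xi,\zeta\>_H=\<A\zeta,\zeta\>_H$. The decisive point is that $A$ is not merely selfadjoint but strictly negative in $H$: because $A$ is selfadjoint in $X$ with $\|e^{tA}\|_{\Lc(X)}\le Me^{-\omega t}$ one has $A\le-\omega I$ in $X$, and since $A$ commutes with $Q_\infty^{\pm 1/2}$,
$$\<A\zeta,\zeta\>_H=\<AQ_\infty^{-1/2}\zeta,Q_\infty^{-1/2}\zeta\>_X\le-\omega\,\|Q_\infty^{-1/2}\zeta\|_X^2=-\omega\,\|\zeta\|_H^2 .$$
Therefore vanishing of the extra term at $x=y$ forces $\zeta=(I-P)S(t)Px=0$ for every $x\in D$; as $(I-P)S(t)P\in\Lc(H)$ and $D$ is dense in $H$, we conclude $(I-P)S(t)P=0$ on all of $H$, that is $S(t)P(D(A)\cap H)\subseteq R(P)$.

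I expect the main obstacle to be the domain bookkeeping rather than the algebra. One must fix a dense test-domain on which simultaneously the equation for $S$ may be applied to $Px,Py$ and every term of \eqref{eq:commuting} for $T(t)$ is meaningful — in particular one needs $AS(t)Px\in H$ so that $AT(t)x=PAS(t)Px\in H$ — and then justify passing from the identity on this domain to the range statement using the boundedness of $(I-P)S(t)P$ together with density. The second essential ingredient is precisely the strict negativity of $A$ in the $H$-inner product: it is what upgrades the scalar identity $\<A\zeta,\zeta\>_H=0$ to $\zeta=0$. Were the semigroup only of nonpositive type, this last step would break down and the ``only if'' direction could fail.
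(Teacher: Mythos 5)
Your proof is correct and follows essentially the same route as the paper's: substitute $Px,Py$ into the equation for $S$, identify the discrepancy with the equation for $PS(t)P$ as $2\langle A(I-P)S(t)Px,(I-P)S(t)Py\rangle_H$ (using $AP=PA$ and the orthogonality of $R(P)$ and $\ker P$ in $H$), and invoke the strict negativity of the selfadjoint operator $A$ in $H$ to force $(I-P)S(t)P=0$. The only difference is organizational: you package both implications into one master identity and run the converse directly on the dense set $D$, whereas the paper argues by contradiction via an approximating sequence $z_n\in\overline{D}_{PSP}(t)$ with $x=y=z_n$ and $x=y=Pz_n$; your version is a mild streamlining of the same computation.
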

\begin{proof}
We start by observing that the existence of a projection $P$ in $H$ such that $AP=PA$ implies that $A$ maps $D(A)\cap H$ into $H$: indeed if $z\in D(A)\cap H$ we have $Az = APz+A(I-P)z = PAz + (I-P)Az$ and both the terms of the last member belong to $H$. \\
Suppose that
\begin{equation}\label{eq:vincoli}
AP=PA, \qquad S(t)P(D(A)) \subseteq D(A)\cap R(P).
\end{equation}
As $S(t)$ solves \eqref{eq:commuting}, we have for $x,y\in \overline{D}_S(t)$
\begin{equation}\label{eq:DREcomm}
\frac{\ud}{\ud t}\langle S(t)x,y \rangle_H
= -\langle Ax, S(t)y\rangle_H - \langle S(t)x,Ay\rangle_H + 2 \langle AS(t)x,S(t)y \rangle_H\,,
\end{equation}
where, as we know,
$$\overline{D}_S(t):=\{ z \in D(A)\cap H: \ Az \in H,\ S(t)z \in D(A),\ AS(t)z \in H \}.$$
Now, if $z\in D$ (i.e $z\in D(A)\cap H$ and $Az\in H$) then, by \eqref{eq:vincoli}, $Pz\in D(A)$ with $APz=PAz\in H$, and in addition $S(t)Pz\in R(P)\cap D(A)$, so that $AS(t)Pz=APS(t)Pz=PAS(t)Pz\in H$. Thus $Pz\in D_S(t)$ for each $t>T^*$ and $z\in D$. Hence, setting
$$\overline{D}_{PSP}(t) = D \qquad \forall t>T^*,$$
and replacing in \eqref{eq:DREcomm} $x,y$ by $Px,Py$, we have for every $x,y\in \overline{D}_{PSP}(t)$ and $t>T^*$
$$\frac{\ud}{\ud t}\langle S(t)Px,Py \rangle_H
= -\langle APx, S(t)Py\rangle_H - \langle S(t)Px,APy\rangle_H + 2 \langle AS(t)Px,S(t)Py \rangle_H\,,
$$
i.e.
$$\frac{\ud}{\ud t}\langle PS(t)Px,y \rangle_H
= -\langle Ax, PS(t)Py\rangle_H - \langle PS(t)Px,Ay\rangle_H + 2 \langle AS(t)Px,S(t)Py \rangle_H\,.
$$
Now we remark that $S(t)Px=PS(t)Px$ and $S(t)Py=PS(t)Py$; hence we obtain, for every $x,y\in \overline{D}_{PSP}(t)$ and $t>T^*$,
$$\frac{\ud}{\ud t}\langle PS(t)Px,y \rangle_H
= -\langle Ax, PS(t)Py\rangle_H - \langle PS(t)Px,Ay\rangle_H + 2 \langle APS(t)Px,PS(t)Py \rangle_H\,.
$$
This shows that $PS(t)P$ solves \eqref{eq:commuting} in $\,]T^*,\infty[\,$.\\[2mm]
Suppose conversely that $P\in {\cal L}(H)$ is an orthogonal projection, such that $AP=PA$, $S(t)P(D(A)\cap H)\subseteq D(A)$ for every $t>T^*$ and $PS(t)P$ solves \eqref{eq:commuting} in $\,]T^*,\infty[\,$. Assume by contradiction that for some $t>T^*$ there exists $v\in S(t)P(D(A)\cap H)\setminus R(P)$: we can write $v=S(t)Pz$ with $z\in D(A)\cap H$. Then $w=(I-P)S(t)Pz$ belongs to $D(A)\cap R(P)^\perp$, $w\neq 0$ and $Aw=(I-P)AS(t)Pz \in R(P)^\perp$. As $\overline{D}_{PSP}(t)$ is dense in $H$, there exists $\{z_n\}\subset \overline{D}_{PSP}(t)$ such that $z_n \to z$ in $H$; then $w_n=(I-P)S(t)Pz_n \to w$ in $H$ and consequently $w_n\neq 0$ for sufficiently large $n$.\\
Now by assumption we have for every $x,y\in \overline{D}_{PSP}(t)$
$$\frac{\ud}{\ud t}\langle PS(t)Px,y \rangle_H +\langle Ax, S(t)y\rangle_H + \langle PS(t)Px,Ay\rangle_H - 2 \langle APS(t)Px,PS(t)Py \rangle_H = 0,$$
whereas for every $x,y\in \overline{D}_S(t)$ it holds
$$\frac{\ud}{\ud t}\langle S(t)x,y \rangle_H +\langle Ax, S(t)y\rangle_H + \langle S(t)x,Ay\rangle_H - 2 \langle APS(t)Px,PS(t)Py \rangle_H = 0.$$
We may choose $x=y=z_n$ in the first equation and $x=y=Pz_n$ in the second one: indeed, as $z_n\in \overline{D}_{PSP}(t)$, we have $Pz_n\in D(A)\cap H$ and $APz_n=PAz_n\in H$; hence $S(t)Pz_n\in D(A)$ and consequently, as remarked at the beginning of the proof, $AS(t)Pz_n\in H$: this shows that  $Pz_n\in \overline{D}_S(t)$. Thus we get
$$\frac{\ud}{\ud t}\langle PS(t)Pz_n,z_n \rangle_H +\langle Az_n, PS(t)Pz_n\rangle_H + \langle PS(t)Pz_n,Az_n\rangle_H - 2 \langle APS(t)Pz_n,PS(t)Pz_n \rangle_H = 0$$
and
$$\frac{\ud}{\ud t}\langle S(t)Pz_n,Pz_n \rangle_H +\langle APz_n, S(t)Pz_n\rangle_H + \langle S(t)Pz_n,APz_n\rangle_H - 2 \langle AS(t)P^2z_n,S(t)P^2z_n \rangle_H = 0.$$
The second equation can be rewritten as
$$\frac{\ud}{\ud t}\langle PS(t)Pz_n,z_n \rangle_H+\langle Az_n, PS(t)Pz_n\rangle_H + \langle PS(t)Pz_n,Az_n\rangle_H - 2 \langle AS(t)Pz_n,S(t)Pz_n \rangle_H = 0.$$
Subtracting the second equation from the first one, we get
$$\langle AS(t)Pz_n,S(t)Pz_n \rangle_H - \langle APS(t)Pz_n,PS(t)Pz_n \rangle_H = 0.$$
On the other hand
\begin{eqnarray*}
\lefteqn{0=\langle AS(t)Pz_n,S(t)Pz_n \rangle_H - \langle APS(t)Pz_n,PS(t)Pz_n \rangle_H} \\
& & = \langle A(I-P)S(t)Pz_n,S(t)Pz_n \rangle_H + \langle APS(t)Pz_n,(I-P)S(t)Pz_n \rangle_H \\
& & = \langle A(I-P)S(t)Pz_n,(I-P)S(t)Pz_n \rangle_H +\langle PAS(t)Pz_n,(I-P)S(t)Pz_n \rangle_H \\
& & = \langle Aw_n,w_n \rangle_H + 0 = \langle Aw_n,w_n \rangle_H.
\end{eqnarray*}
Now we recall that $A$ is of negative type and selfadjoint in $H$: thus, since $w_n\neq 0$,
$$\langle Aw_n,w_n \rangle_H = - \langle (-A)w_n,w_n \rangle_H = - \|(-A)^{1/2}w_n\|_H^2 <0:$$
this is a contradiction.
\end{proof}

\section{Examples}\label{examples}

\subsection{Delay state equation}

Consider the following linear controlled delay equation
\begin{equation}
\left\{
\begin{array}
[c]{l}
x'(t)  =a_0x(t)+ a_1x(t-d)+ b_0 u(t)
,\text{ \ \ \ }t\in[ 0,T] \\
x(0)=x_0, \quad x(s)=x_1(s), \; s \in [-d,0[\,,
\end{array}
\right.   \label{eq-contr-rit}
\end{equation}
where the initial datum $(x_0,x_1)$ is in $\R\times L^2(-d,0;\R)$, the control $u$ belongs to $L^2(0,T;\R)$ and the coefficients $a_0$, $a_1$, $b_0$ are real numbers with $a_1\ne 0$ and $b_0\ne 0$ to avoid degeneracy. We call $x(\cdot\,;(x_0,x_1),u)$ the unique solution which always exists (see e.g. \cite[Chapter 4]{BDDM07}).
Using a standard approach (see e.g. again \cite[Chapter 4]{BDDM07}),
we reformulate equation (\ref{eq-contr-rit}) as an abstract differential equation in the Hilbert space $\calh=\R\times L^2(-d,0;\R)$.
To this end we introduce the operator
$A : \cald(A) \subset \calh
\rightarrow \calh$ as follows:
\begin{equation}\label{A}
\left\{
\begin{array}{l}
\cald(A)=\left\lbrace (x_0,x_1)\in \calh:\ y_1\in W^{1,2}([-d,0],\R),\ x_1(0)=x_0 \right\rbrace,
\\[2mm]
A(x_0 ,x_1 )= ( a_0 x_0+ a_1x_1(-d), x_1').
\end{array}
\right.
\end{equation}
We denote by $e^{tA}$ the $C_0$-semigroup generated by $A$: for
$x=(x_0,x_1)\in \calh$,
\begin{equation}
\label{semigroup}
e^{tA} \left(x_0, x_1\right)=
\left(x(t;(x_0,x_1),0),x(t+\cdot;(x_0,x_1),0)\right)\in \calh.
\end{equation}
The control operator $B$ is bounded and defined as
\begin{equation}
 \label{B}
B:\R\rightarrow \calh,\qquad Bu=(b_0 u,0), \quad u\in\R.
\end{equation}
In this setup, equation \eqref{eq-contr-rit} is equivalent (in the sense that the first component of $y$ is the solution of \eqref{eq-contr-rit}) to
the equation in $\calh$:
$$
y'(t)=Ay(t)+Bu(t),\quad y(0)=(x_0,x_1) \in \calh.
$$
For this system the null controllability Assumption \ref{NC} holds for any $T_0>r$, see e.g. \cite[Theorem 10.2.3]{DaPratoZabczyk96} or \cite{OlbrotPandolfi88}.
Hence Theorems \ref{PsoldiR} and \ref{unicitaRicc} hold in this case.

Now we compute the adjoints and the controllability operator.
We denote by $A^*$ the adjoint operator of $A$:
\begin{equation} \label{Astar}
\left\{
\begin{array}{l}
\cald(A^*)=\left\lbrace (x_0,x_1)\in \calh:\ y_1\in W^{1,2}([-d,0],\R),\ x_1(-d)=a_1x_0 \right\rbrace,
\\[2mm]
A^*(x_0 ,x_1 )=( a_0 x_0 +x_1(0), -x_1').
\end{array}
\right.
\end{equation}
Similarly, denoting by $e^{tA^*}=(e^{tA})^*$ the $C_0$-semigroup generated by $A^*$,
we have for $(x_0,x_1)\in \calh$
\begin{equation}
\label{semigroupadjoint}
e^{tA^*} (x_0,x_1)=
\left(x(t;(y_0,y_1),0),x(t+\cdot;(y_0,y_1),0)\right) \in \calh
\end{equation}
where
\begin{equation}
 \label{eq:foradjoint}
y_0=x_0, \quad and \quad  y_1(r)=a_1^{-1}x_1(-d-r), \quad r \in \,]-d,0].
\end{equation}
The adjoint of the control operator is
\begin{equation}
 \label{B*}
B^*:\calh \rightarrow \R,\qquad B^*(x_0,x_1)=b_0 x_0, \quad
\forall (x_0,x_1)\in\calh.
\end{equation}
It follows that
$$
BB^*e^{tA^*}(x_0,x_1)= b_0^2 \left( x(t;(y_0,y_1),0),0 \right)
$$
where $(y_0,y_1)$ is as in \eqref{eq:foradjoint}.
Hence, by linearity of \eqref{eq-contr-rit} we can write
$$
e^{tA}BB^*e^{tA^*}(x_0,x_1)= b_0^2 x(t;(y_0,y_1),0) \left( g(t),g(t+\cdot) \right)
$$
where again $(y_0,y_1)$ is as in \eqref{eq:foradjoint} and
$g(t)=x(t;(1,0),0)$ (which is a given piecewise polynomial function that may be computed recursively).
We can then finally write, for $(x_0,x_1)\in\calh$,
\begin{equation}
\label{eq:Qtdelay}
Q_t (x_0,x_1)= b_0^2 \left(\int_0^t x(s;(y_0,y_1),0) g(s)ds,
\int_0^t x(s;(y_0,y_1),0) g(s+\cdot)ds\right) \in \calh
\end{equation}
where $(y_0,y_1)$ is as in \eqref{eq:foradjoint}.
It is not obvious to compute $R(Q_t)$ and $R(Q_t^{1/2})$.
However we can at least say that $R(Q_t) \subseteq D(A)$: indeed the boundary condition $x_0=x_1(0)$ is obviously satisfied for all elements of $R(Q_t)$ by
continuity of translations in $L^2$; on the other hand the second element of
$Q_t(x_0,x_1)$ belongs to $W^{1,2}([-d,0],\R)$ by direct verification
simply using the continuity of $x(s;(y_0,y_1),0)$.

Hence the sets
$D_P(t)$ and $D_R(t)$ in Theorem \ref{PsoldiR} are equal to $R(Q_t)$
in this case.

\subsection{Diagonal cases}
\label{SSE:DIAGONAL}
Let $\{e_n\}_{n\in \N}$ be a complete orthonormal
system in the Hilbert space $X$, and let $\{\lambda_n\}_{n\in
\N}$ be a strictly increasing sequence of strictly positive numbers such that
$\lambda_n \to +\infty$ as $n\to \infty$. We define on the space $X$
the semigroup
$$S(t)=\sum_{n\in \N} e^{-\lambda_nt}\langle x,e_n\rangle_X\,
e_n\, ,\quad t\ge 0.$$
It is easily verified that $S$ is an analytic semigroup of negative type $-\omega$, where
$\omega=\min_{n\in \N} \lambda_n=\lambda_0>0$, with norm $\|S_(t)\|_{{\cal
L}(X)} = e^{-\omega t}$. Its generator is the self-adjoint, dissipative, densely defined operator $A:D(A)\subset X\to X$, given by
\begin{equation}\label{defA}
\left\{ \begin{array}{l}
D(A) = \left\{ x\in X: \sum_{n\in \N} \lambda_n^2 \langle x,e_n \rangle_X^2 <+\infty \right \}\\[2mm]
Ax= -\sum_{n\in \N} \lambda_ n\langle x,e_n \rangle_X \, e_n
\end{array} \right.
\end{equation}
(see \cite[pp. 178 and 198]{Zabczyk92}). Note that $0\in \rho(A)$ and that $A^{-1}$ is selfadjoint and compact.\\
As $A$ is dissipative, the fractional powers $(-A)^\alpha$ of
$-A$ are well defined (see \cite[Proposition 6.1, page 113]{BDDM07}). \\
Concerning the operator $B$, we assume that $B:U\to X$ is such that $BB^*$ is diagonal in $X$:
$$BB^*e_n = b_ne_n \quad \forall n\in \N,$$
with $b_n\ge 0$ for all $n\in \N$. By Assumption \ref{hp:main} $B$ is bounded, hence the sequence $\{b_n\}$ must be bounded, too. However here we generalize a bit the setting, allowing $BB^*$ to be unbounded. Since $S(t)=e^{tA}$ commutes with $BB^*$ we have, see \eqref{eq:Qtcommutingformula},
$$Q_t x = \int_0^t e^{2sA}BB^*x \,ds = \frac12 A^{-1}(e^{2tA}-I)BB^*x, \quad \forall t>0, \qquad Q_\infty x = -\frac12 A^{-1}BB^*x;$$
in particular, for $t>0$,
\begin{equation}\label{Qdiag}
Q_t e_n = \frac1{2\lambda_n}(1-e^{-2\lambda_nt}) b_n e_n, \quad Q_\infty e_n = \frac1{2\lambda_n} b_n e_n \qquad \forall n\in \N.
\end{equation}
Thus, if $BB^*$ is possibly unbounded, we need to assume
\begin{equation}\label{bnlambdan}
\sup_{n\in \N} \frac{b_n}{\lambda_n} <\infty
\end{equation}
in order that $Q_t,Q_\infty \in {\cal L}(X)$ for all $t>0$.
The null controllability holds for a given $t>0$ if and only if there exists $c_t>0$ such that
$$\|S(t)x\|_X^2\le c_t \langle Q_t x,x\rangle_X \qquad \forall x\in X.$$
This is equivalent to
$$e^{-2\lambda_n t} \le c_t \frac{b_n}{2\lambda_n}(1-e^{-2\lambda_nt}) \qquad \forall n\in \N.$$
Hence Assumption \ref{NC} holds for every $T_0>0$ if and only if $b_n>0$ for every $n\in \N$ and
$$\sup_{n \in \N}
\frac{2\lambda_n}{b_n(e^{2\lambda_n t}-1)}< + \infty \quad \forall t>0.$$
Now, we look at $R(Q_\infty)$ and $R(Q_\infty^{1/2})$ (observe that, by Proposition \ref{monotQt}-(iii), these are equal to $R(Q_t)$ and $R(Q_t^{1/2})$ for all $t>0$).
By \eqref{Qdiag} is clear that $R(Q_\infty)\subseteq R(BB^*)$ and $R(Q_\infty^{1/2})\subseteq \overline{R(BB^*)}$.
\begin{itemize}
  \item If $b_n\ne 0$ only for a finite number of $n\in \N$ then, clearly,
$R(Q_\infty)=R(Q_\infty^{1/2})=R(BB^*)\subseteq D(A)$. In this case the RE is substantially finite dimensional: the function $t\to Q_t^{-1}$ is a solution on $D_P(t)=D_R(t)=R(BB^*)$ and, by Theorem \ref{th:commuting3}, $P Q_t^{-1}P$ is a solution for every projection generated by some elements of the basis $\{e_n\}$.

  \item If $b_n\ne 0$ for every $n\in \N_1$, where $\N_1$ is an infinite subset of $\N$, then, clearly,
$$R(Q_\infty) = \left\{z\in R(BB^*): \
\left\{\frac{\lambda_n}{b_n}\langle z,e_n\rangle_X\right\}_{n\in \N_1} \in \ell^2\right\}.$$
In this case the RE is infinite dimensional. Again the function $t\to Q_t^{-1}$ is a solution on $D_P(t)=D_R(t)=D(A)\cap R(Q_\infty)$ and, by Theorem \ref{th:commuting3}, $P Q_t^{-1}P$ is a solution for every projection generated by some elements of the basis $\{e_n\}$.
\end{itemize}

We now look closely at the second case above, when $\N_1=\N$. First, if $BB^*$ is bounded, i.e.
$b=\{b_n\}_{n\in \N}\in \ell^\infty$, we have $R(Q_\infty)\subseteq D(A)$ and, similarly
$R(Q_\infty^{1/2})\subseteq D(A^{1/2})$. On the other hand, if, for some $\delta >0$, we have $b_n \ge \delta$ for all $n\in \N$, then
$R(Q_\infty)\supseteq D(A)$ and $R(Q_\infty^{1/2})\supseteq D(A^{1/2})$.

Thus, if both $BB^*$ and $(BB^*)^{-1}$ are bounded we have $R(Q_\infty)= D(A)$ and $R(Q_\infty^{1/2})=D(A^{1/2})$.

Finally, if $b_n=\lambda_n^\alpha$ for every $n\in \N$, with  $\alpha \in \R$, then $R(Q_\infty)=D(A^{1-\alpha})$ and $R(Q_\infty^{1/2})=D(A^{\frac{1-\alpha}2})$.\\

Now we consider a special case which fits into the application studied e.g. in \cite{BDGJL4}
in the case of the Landau-Ginzburg model.
We take $X=H^{-1}(0,\pi;\R)$ and $A$ the Laplacian in $X$ with Dirichlet boundary conditions.
We also take $U=X$ and $B=I$. Using what said just above we see that $R(Q_\infty)=D(A)=H^{1}_0(0,\pi;\R)$ and $H=R(Q_\infty^{1/2})=D(A^{1/2})=L^{2}(0,\pi;\R)$.


\section*{Appendix}
\appendix
\section{Pseudoinverses}
\label{SS:PSEUDOINVERSES}
We recall here two well known results of functional analysis that will be very useful in the sequel.

Given a linear operator $F: X \to Y$, where $X$ and $Y$ are Hilbert spaces,
we define, as in \cite[p. 209]{Zabczyk92} (see also \cite[p. 429]{DaPratoZabczyk14}), the pseudoinverse $F^{-1}$ of $F$ as the linear operator
$$
F^{-1}: D(F^{-1})\subset Y \to X,
$$
with domain
$D(F^{-1})=R(F)$,
where $F^{-1}y$ is the element of $F^{-1}(\{y\})$ with minimal norm.
Note that $R(F^{-1})= (\ker F)^\perp$.

We have the following result, taken from
\cite[Proposition B.1, p.429]{DaPratoZabczyk14}.
\begin{Proposition}
\label{Pr:B.1}
Let $E,E_1,E_2$ be three Hilbert spaces, let $A_1: E_1 \to E$,
$A_2:E_2 \to E$ be linear operators, let $A_1^*:E \to E_1$ and
$A_2^*:E \to E_2$ be their adjoints and finally let
$A_1^{-1}:D(A_1^{-1})\subseteq E_1 \to E$,
$A_2^{-1}:D(A_2^{-1})\subseteq E_2 \to E$
be the respective pseudoinverses. Then we have:
\begin{itemize}
  \item[(i)]  $R(A_{1})\subseteq R(A_{2})$ if and only if there exists a constant $k>0$
such that
$$\|A_{1}^{ \star }x\|_{E_1}\leq k\|A_{2}^{ \star }x\|_{E_2} \qquad \forall x
\in E.$$
  \item[(ii)] If
$$\|A_{1}^{ \star }x\|_{E_1}=\|A_{2}^{\star}x\|_{E_2} \qquad \forall x \in E,$$
then $R(A_{1})$=$R(A_{2})$, $R(A_{1}^{-1})=R(A_{2}^{-1})$ and
$$\|A_{1}^{-1}x\|_{E_1}=\|A_{2}^{-2}x\|_{E_2}\qquad \forall x \in  R(A_{1}).$$
\end{itemize}
\end{Proposition}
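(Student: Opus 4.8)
The statement is the classical range-inclusion theorem of Douglas, combined with a variational identity for the norm of a pseudoinverse. The plan is to prove part (i) through the equivalence between the majorization $\|A_1^* x\|_{E_1}\le k\|A_2^* x\|_{E_2}$ and a bounded factorization $A_1=A_2C$, and then to deduce part (ii) from (i) applied in both directions together with a dual characterization of $\|A_i^{-1}\cdot\|$. Throughout I use that $A_1,A_2$ are bounded (so that the adjoints $A_i^*$ are everywhere defined, as the statement implicitly assumes) and the recalled fact that $R(F^{-1})=(\ker F)^\perp$.

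For the implication ``majorization $\Rightarrow$ range inclusion'' in (i), I would define a linear map on $R(A_2^*)$ by $A_2^* x\mapsto A_1^* x$. This is well defined and bounded precisely because of the hypothesis: if $A_2^* x=A_2^* x'$ then $\|A_1^*(x-x')\|_{E_1}\le k\|A_2^*(x-x')\|_{E_2}=0$, while $\|A_1^* x\|_{E_1}\le k\|A_2^* x\|_{E_2}$ gives continuity with norm $\le k$. Extending this map by continuity to $\overline{R(A_2^*)}$ and by zero on $\ker A_2=(R(A_2^*))^\perp$ yields $C_0\in\mathcal{L}(E_2,E_1)$ with $C_0A_2^*=A_1^*$; taking adjoints gives $A_1=A_2C_0^*$, hence $R(A_1)\subseteq R(A_2)$.

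For the converse ``range inclusion $\Rightarrow$ majorization'', the idea is to set $C:=A_2^{-1}A_1:E_1\to E_2$, which is well defined since $R(A_1)\subseteq R(A_2)=D(A_2^{-1})$ and is linear because the pseudoinverse is linear on its domain. The point is to show $C$ is bounded by the closed graph theorem: if $e_n\to e$ and $Ce_n\to g$, then $A_2Ce_n=A_1e_n\to A_1e$ and $A_2Ce_n\to A_2g$ force $A_2g=A_1e$, while $g\in(\ker A_2)^\perp$ (a closed set), so $g=A_2^{-1}A_1e=Ce$. Then $A_1=A_2C$ gives $A_1^*=C^*A_2^*$ and the majorization with $k=\|C\|$. \emph{This closed-graph step is the main obstacle}: the boundedness of the minimal-norm selection $A_2^{-1}A_1$ is not available by a direct estimate and must be extracted from the graph being closed.

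Finally, for (ii): taking $k=1$ in (i) in both directions immediately gives $R(A_1)=R(A_2)$, and hence the asserted equality of the pseudoinverse ranges through $R(A_i^{-1})=(\ker A_i)^\perp$ (when $E_1=E_2$, as in the applications of this paper). For the norm identity I would prove the dual formula
\begin{equation*}
\|A_i^{-1}x\|_{E_i}=\sup\{\,|\langle x,y\rangle_E| \ : \ y\in E,\ \|A_i^* y\|_{E_i}\le 1\,\}\qquad \forall x\in R(A_i),
\end{equation*}
by writing $u=A_i^{-1}x\in\overline{R(A_i^*)}$ with $A_iu=x$: the bound ``$\le$'' follows from $\langle x,y\rangle_E=\langle u,A_i^* y\rangle_{E_i}$ and Cauchy--Schwarz, and ``$\ge$'' from choosing $A_i^* y_n\to u$. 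Since the right-hand side depends on $A_i$ only through the function $y\mapsto\|A_i^* y\|$, the hypothesis $\|A_1^* y\|_{E_1}=\|A_2^* y\|_{E_2}$ makes the two admissible sets coincide, whence $\|A_1^{-1}x\|_{E_1}=\|A_2^{-1}x\|_{E_2}$ for every $x\in R(A_1)=R(A_2)$.
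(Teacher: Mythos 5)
First, a point of comparison: the paper does not prove this proposition at all — it is imported verbatim from Da Prato--Zabczyk (Proposition B.1, p.~429) — so there is no in-paper argument to measure yours against; you are supplying the classical Douglas factorization proof that the authors chose to cite. On its own terms your proof of (i) is correct in both directions: the densely defined contraction $A_2^*x\mapsto A_1^*x$, extended by continuity on $\overline{R(A_2^*)}$ and by zero on $\ker A_2$, gives the factorization $A_1=A_2C_0^*$ and hence the range inclusion; conversely, the closed-graph argument for $C=A_2^{-1}A_1$ is exactly the right device, and your key observation — that $C e_n$ lives in the closed subspace $(\ker A_2)^\perp$, so the limit $g$ is forced to be the minimal-norm solution of $A_2g=A_1e$ — is what makes it work. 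The variational formula for $\|A_i^{-1}x\|$ in terms of $\sup\{|\langle x,y\rangle_E|:\|A_i^*y\|\le 1\}$ is also correct and yields the norm identity in (ii) cleanly.

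The one genuine gap is the middle assertion of (ii), $R(A_1^{-1})=R(A_2^{-1})$. Your parenthetical reduction to $R(A_i^{-1})=(\ker A_i)^\perp$ does not close the argument even when $E_1=E_2$: the hypothesis $\|A_1^*x\|_{E_1}=\|A_2^*x\|_{E_2}$ controls the adjoints, hence gives $\ker A_1^*=\ker A_2^*$ and $\overline{R(A_1)}=\overline{R(A_2)}$, but says nothing about $\ker A_1$ versus $\ker A_2$. In fact the sub-claim is false as literally stated: take $E=E_1=E_2=\R^2$, $A_1(u_1,u_2)=(u_1,0)$ and $A_2(u_1,u_2)=(u_2,0)$; then $\|A_1^*x\|=\|A_2^*x\|=|x_1|$, $R(A_1)=R(A_2)=\R e_1$ and the norm identity holds, yet $R(A_1^{-1})=(\ker A_1)^\perp=\R e_1$ while $R(A_2^{-1})=(\ker A_2)^\perp=\R e_2$. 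What your isometry $A_2^*x\mapsto A_1^*x$ actually produces is a unitary from $\overline{R(A_2^*)}=R(A_2^{-1})$ onto $\overline{R(A_1^*)}=R(A_1^{-1})$, so the two ranges are isometrically isomorphic (consistent with the norm identity) but need not coincide. Since only part (i) and the equality $R(A_1)=R(A_2)$ are invoked elsewhere in the paper, nothing downstream is affected, but you should either prove that sub-claim under an additional hypothesis or explicitly drop it rather than wave at it.
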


\section{Some properties of commuting operators}\label{SSE:COMMOPER}

Given a real separable Hilbert space $X$,
let $A:D(A)\subseteq X \to X$ be a generator of a strongly continuous semigroup $e^{tA}$
and, for any $\lambda \in \rho(A)$, denote by $R(\lambda,A)$ the resolvent operator $(\lambda - A)^{-1}$.

\begin{Definition}
Consider an operator $K \in \Lc(X)$. We say that $K$ commutes with $A$ if, for all
$x \in D(A)$ we have $Kx \in D(A)$ and $AKx=KAx$.
In particular this means that $K$ maps $D(A)$ into itself \footnote{In this context the operator $AK$ may be defined on a set $Y$ strictly larger than $D(A)$ and in this case, in addition, the operator $KA$ can be extended to all of $Y$. An obvious example of this situation occurs when $K$ is a resolvent of $A$.}.
\end{Definition}

The following result is known but, for the reader's convenience, we provide the complete proof as we could not find it in the literature.
\begin{Lemma}
\label{lm:commop}
Let $X$ be a Hilbert space, $K\in \Lc(X)$, and $A:D(A)\subseteq X \to X$ be the generator of a strongly continuous semigroup $e^{tA}$.
The following statements are equivalent:
\begin{itemize}
  \item[(i)] There exists $\lambda_0 \in \rho (A)$ such that
  $R(\lambda_0,A)K=KR(\lambda_0,A)$.
  \item[(ii)]  For every $\lambda \in \rho (A)$ it holds
  $R(\lambda,A)K=KR(\lambda,A)$.
  \item[(iii)] $K$ commutes with $A$.
  \item[(iv)] $K^*$ commutes with $A^*$.
  \item[(v)] For all $t>0$ we have $e^{tA}K=Ke^{tA}$.
\end{itemize}
\end{Lemma}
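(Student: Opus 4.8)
The plan is to establish the five statements as equivalent through the cycle (i) $\Rightarrow$ (iii) $\Rightarrow$ (ii) $\Rightarrow$ (i), supplemented by (iii) $\Leftrightarrow$ (v) and (iii) $\Leftrightarrow$ (iv). The only genuinely analytic point will be the passage between commutation with $A$ (equivalently, with a single resolvent) and commutation with the whole semigroup; everything else is bookkeeping with the resolvent relation $(\lambda-A)R(\lambda,A)=I$ on $X$ and $R(\lambda,A)(\lambda-A)=I$ on $D(A)$. A pleasant feature of this route is that I can avoid any connectedness or analyticity argument on $\rho(A)$, because passing through (iii) delivers all $\lambda\in\rho(A)$ at once.

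First I would handle the purely algebraic equivalences among (i), (ii), (iii). For (i) $\Rightarrow$ (iii): given $R(\lambda_0,A)K=KR(\lambda_0,A)$, I take $x\in D(A)$ and write $x=R(\lambda_0,A)y$ with $y=(\lambda_0-A)x$; then $Kx=KR(\lambda_0,A)y=R(\lambda_0,A)Ky\in D(A)$, and applying $(\lambda_0-A)$ gives $(\lambda_0-A)Kx=Ky=K(\lambda_0-A)x$, whence $AKx=KAx$, so $K$ commutes with $A$. For (iii) $\Rightarrow$ (ii): assuming $K$ commutes with $A$, I fix $\lambda\in\rho(A)$ and $y\in X$, set $x=R(\lambda,A)y\in D(A)$, and use $Kx\in D(A)$ with $(\lambda-A)Kx=K(\lambda-A)x=Ky$ to conclude $Kx=R(\lambda,A)Ky$, i.e. $KR(\lambda,A)=R(\lambda,A)K$ for every $\lambda\in\rho(A)$. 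Since (ii) $\Rightarrow$ (i) is trivial, (i), (ii), (iii) are all equivalent.

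Next I would treat (iii) $\Leftrightarrow$ (v), which is where the main obstacle lies. The direction (v) $\Rightarrow$ (iii) is soft: for $x\in D(A)$, boundedness of $K$ gives $\frac1t(e^{tA}Kx-Kx)=K\,\frac1t(e^{tA}x-x)\to KAx$ as $t\to0^+$, so the limit on the left exists, meaning $Kx\in D(A)$ and $AKx=KAx$. The reverse (iii) $\Rightarrow$ (v) is the real work: from the equivalence already proved, $K$ commutes with every resolvent, hence with the Yosida approximants $A_n=nAR(n,A)=n^2R(n,A)-nI\in\Lc(X)$, hence with each bounded exponential $e^{tA_n}=\sum_{k\ge0}\frac{t^k}{k!}A_n^k$. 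Letting $n\to\infty$ and invoking the standard strong convergence $e^{tA_n}x\to e^{tA}x$ together with continuity of $K$ yields $e^{tA}Kx=\lim_n e^{tA_n}Kx=\lim_n Ke^{tA_n}x=Ke^{tA}x$ for all $x$. (Alternatively one may use the exponential formula $e^{tA}x=\lim_n[\tfrac nt R(\tfrac nt,A)]^n x$.) This limiting step, and the requisite convergence facts for the Yosida approximation, form the only nontrivial ingredient of the argument.

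Finally, (iii) $\Leftrightarrow$ (iv) follows by duality. Since $X$ is a Hilbert space and hence reflexive, $A^*$ generates the $C_0$-semigroup $e^{tA^*}=(e^{tA})^*$. Taking adjoints in (v), the identity $e^{tA}K=Ke^{tA}$ for all $t\ge0$ is equivalent to $K^*e^{tA^*}=e^{tA^*}K^*$ for all $t\ge0$; applying the already-established equivalence (iii) $\Leftrightarrow$ (v) to the pair $(A^*,K^*)$ shows the latter holds iff $K^*$ commutes with $A^*$, i.e. iff (iv) holds. This closes all the equivalences. The two points I would be careful to record are the justification that $A^*$ is again a generator with $(e^{tA})^*=e^{tA^*}$ (valid precisely because $X$ is reflexive) and the strong-convergence properties of the Yosida approximation used in (iii) $\Rightarrow$ (v).
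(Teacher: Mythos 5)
Your proposal is correct, but it is organized quite differently from the paper's proof, and in several places it is more economical. For the resolvent statements, the paper first proves (i) $\Leftrightarrow$ (ii) directly via the resolvent identity (showing $KR(\lambda,A)-R(\lambda,A)K=(\lambda_0-\lambda)R(\lambda_0,A)[KR(\lambda,A)-R(\lambda,A)K]$ and then cancelling by injectivity), and only afterwards deduces (iii) from (ii) by a Yosida-approximation and closedness argument; your cycle (i) $\Rightarrow$ (iii) $\Rightarrow$ (ii) $\Rightarrow$ (i) replaces both of these with the purely algebraic observation that a single commuting resolvent already forces $Kx=R(\lambda_0,A)Ky\in D(A)$ and $AKx=KAx$, which renders the resolvent identity and the closedness of $A$ unnecessary for this part. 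For (v) $\Rightarrow$ (iii) you use the difference quotient $\frac1t(e^{tA}Kx-Kx)=K\frac1t(e^{tA}x-x)$ directly, whereas the paper proves (v) $\Rightarrow$ (ii) through the Laplace-transform representation $R(\lambda,A)x=\int_0^\infty e^{-\lambda t}e^{tA}x\,\ud t$; both work, yours being slightly more elementary. The one place where your route leans on a fact the paper avoids is (iii) $\Leftrightarrow$ (iv): you pass through the adjoint semigroup, which requires knowing that $A^*$ generates $(e^{tA})^*$ on a Hilbert (reflexive) space, while the paper gives a self-contained computation with inner products, estimating $|\langle Ax,K^*y\rangle|=|\langle x,K^*A^*y\rangle|\le C\|x\|$ to conclude $K^*y\in D(A^*)$. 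You correctly flag this dependence; since the ambient space is Hilbert the fact is standard, so the argument is sound. The only step the two proofs genuinely share is (iii) $\Rightarrow$ (v) via the exponentials of the Yosida approximants.
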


\begin{proof}
(i) $\Longleftrightarrow$ (ii).

We only prove (i) $\Longrightarrow$ (ii), as the other direction is obvious.
Let $\lambda \in \rho (A)$. Then
$$
KR(\lambda,A) - R(\lambda,A)K=K[R(\lambda,A)-R(\lambda_0,A)]
+ [KR(\lambda_0,A)-R(\lambda_0,A)K]+[R(\lambda_0,A)-R(\lambda,A)]K,
$$
and so, using the so-called resolvent identity, and the fact that $K$ commutes with $R(\lambda_0,A)$,
\begin{eqnarray*}
KR(\lambda,A) - R(\lambda,A)K & = & K(\lambda_0- \lambda)R(\lambda,A)R(\lambda_0,A)
-(\lambda_0- \lambda)R(\lambda,A)R(\lambda_0,A)K =\\
& = & (\lambda_0- \lambda)R(\lambda_0,A)[KR(\lambda,A) - R(\lambda,A)K].
\end{eqnarray*}
Then it follows that
$$
[I - (\lambda_0- \lambda)R(\lambda_0,A)][KR(\lambda,A) - R(\lambda,A)K]=0.
$$
Since $I - (\lambda_0- \lambda)R(\lambda_0,A)=(\lambda - A)R(\lambda_0,A)$
we can apply $R(\lambda,A)$ to both sides of the above equality which, thanks to the injectivity of $R(\lambda,A)$ is equivalent to
$$
R(\lambda_0,A)[KR(\lambda,A) - R(\lambda,A)K]=0;
$$
this, using the injectivity of $R(\lambda_0,A)$, gives the claim.\\[2mm]
(ii) $\Longrightarrow$ (iii).

For sufficiently large $n \in \N$, consider $A_n:= nAR(n,A)=-n + n^2 R(n,A)$, the Yosida approximants of $A$.
By (iii) we immediately have $KA_n x=A_nKx$ for all $x \in X$.
Let now $x \in D(A)$. By the properties of Yosida approximants \cite{Pazy83} we have
$A_nx \to Ax$ and $x_n:=nR(n,A)Kx \to Kx$ as $n \to + \infty$.

Now $Ax_n=A_nKx=KA_nx\to KAx$ as $n \to + \infty$. Since $A$ is closed, we have $Kx \in D(A)$ and $AKx=KAx$, which is the claim.\\[2mm]
(iii) $\Longrightarrow$ (ii).

Let $\lambda \in \rho(A)$. We have, for $x \in X$,
$$
(\lambda -A) [R(\lambda,A)K-KR(\lambda,A)]x
=Kx - \lambda K R(\lambda,A)x + AK R(\lambda,A)x
$$
and, since $A$ commutes with $K$,
$$
=Kx - K\lambda  R(\lambda,A)x + KA R(\lambda,A)x
=K[I-\lambda  R(\lambda,A)+A R(\lambda,A)]x=0.
$$
By the injectivity of $\lambda-A$ this implies that
$R(\lambda,A)Kx-KR(\lambda,A)x=0$.\\[2mm]
(iii) $\Longleftrightarrow$ (iv).

We only prove (iii) $\Longrightarrow$ (iv), as the other direction follows simply by taking the adjoints and using the relations $A^{**}=A$ and $K^{**}=K$.

Let $x\in D(A)$ and $y \in D(A^*)$. Then
$$
\<KAx,y\>=\<Ax,K^*y\>
$$
and also, since $K$ commutes with $A$,
$$
\<KAx,y\>=\<AKx,y\>=\<Kx,A^*y\>=\<x,K^*A^*y\>.
$$
From the two above it follows that
$$
|\<Ax,K^*y\>|=|\<x,K^*A^*y\>| \le C |x|
$$
for some $C>0$. This means that $K^*y\in D(A^*)$ and that
$$
A^*K^*y=(KA)^*y=(AK)^*y =K^*A^*y
$$
which is the claim.\\[2mm]
(ii) $\Longrightarrow$ (v).

We know that, for all $x \in X$, $A_nK x=KA_n x$ and $e^{tA_n}x=\sum_{p=0}^{\infty}\frac{t^pA_n^p}{p!}x$; hence we get
$$
Ke^{tA_n}x=K\sum_{p=0}^{\infty}\frac{t^pA_n^p }{p!}x
=\sum_{p=0}^{\infty}K\frac{t^pA_n^p }{p!}x
=\sum_{p=0}^{\infty}\frac{t^pA_n^p }{p!}Kx
=e^{tA_n}Kx.
$$
We now let $n \to \infty$ and use the fact that, by the properties of Yosida approximants
\cite{Pazy83}, $e^{tA_n}x \to e^{tA}x$ for all $x \in X$, as $n \to + \infty$. This implies
the claim.\\[2mm]
(v) $\Longrightarrow$ (ii).

We know \cite{Pazy83} that, for all sufficiently large $\lambda \in \rho (A)$ and for all $x \in X$,
$$
R(\lambda, A)x=\int_0^{\infty}e^{-\lambda t} e^{tA}x\, \ud t.
$$
Then
$$
KR(\lambda, A)x=K\int_0^{\infty}e^{-\lambda t} e^{tA}x\, \ud t
=
\int_0^{\infty}K e^{-\lambda t} e^{tA}x\, \ud t
=
\int_0^{\infty} e^{-\lambda t} e^{tA}Kx\, \ud t
=
R(\lambda, A)Kx
$$
and the claim follows.
\end{proof}

We will need also the following result on pseudoinverses.

\begin{Lemma}
\label{lm:commoppseudoinv}
Let $E$ be a Hilbert space and let $A_1, A_2 \in \Lc(E)$ be such that $A_1A_2=A_2A_1$, $A_2$
is selfadjoint and $R(A_1)\subseteq {R(A_2)}$.
Then, denoting by $A_2^{-1}$ the pseudoinverse of $A_2\,$, the two operators
$$
A_1 A_2^{-1}:R(A_2) \to E \qquad \textrm{and} \qquad
A_2^{-1}A_1 :E \to E
$$
coincide on $R(A_2)$; hence, in particular, $A_1 A_2^{-1}$ can be extended to all of $E$.
\end{Lemma}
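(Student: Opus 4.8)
The plan is to reduce the identity to the description of the pseudoinverse $A_2^{-1}$ as inversion on $R(A_2)$ followed (or preceded) by orthogonal projection onto $(\ker A_2)^\perp$. First I would record that both compositions are meaningful on the stated domains: since $A_2^{-1}$ has domain $R(A_2)$ and $A_1\in\Lc(E)$, the operator $A_1A_2^{-1}$ is defined on $R(A_2)$; and since $R(A_1)\subseteq R(A_2)=D(A_2^{-1})$, the operator $A_2^{-1}A_1$ is defined on all of $E$. Once the coincidence on $R(A_2)$ is established, the fact that $A_2^{-1}A_1$ is defined everywhere automatically yields the extension of $A_1A_2^{-1}$ to all of $E$.

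The key preliminary step is to pass the commutation to the adjoint of $A_1$. Taking adjoints in $A_1A_2=A_2A_1$ and using $A_2^*=A_2$ gives $A_2A_1^*=A_1^*A_2$; hence both $A_1$ and $A_1^*$ leave $\ker A_2$ invariant, for if $A_2v=0$ then $A_2A_1^*v=A_1^*A_2v=0$ and likewise $A_2A_1v=A_1A_2v=0$.

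Next I would fix $y\in R(A_2)$ and set $w=A_2^{-1}y$, so that $A_2w=y$ and, by the very definition of the pseudoinverse (recall $R(A_2^{-1})=(\ker A_2)^\perp$), $w\in(\ker A_2)^\perp$. Commutativity gives $A_2(A_1w)=A_1(A_2w)=A_1y$, so $A_1w$ is a preimage of $A_1y$ under $A_2$; consequently $A_2^{-1}(A_1y)$ is the orthogonal projection of $A_1w$ onto $(\ker A_2)^\perp$. The whole argument thus comes down to showing $A_1w\in(\ker A_2)^\perp$: for each $v\in\ker A_2$ one has $\langle A_1w,v\rangle_E=\langle w,A_1^*v\rangle_E=0$, because $A_1^*v\in\ker A_2$ while $w\perp\ker A_2$. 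Therefore the projection acts trivially and $A_2^{-1}(A_1y)=A_1w=A_1A_2^{-1}y$, which is the desired identity on $R(A_2)$.

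The hard part is precisely this last verification, namely that $A_1w$ already lies in $(\ker A_2)^\perp$, so that the minimal-norm selection built into the pseudoinverse has no effect. This is where selfadjointness of $A_2$ is used essentially, through the induced commutation $A_2A_1^*=A_1^*A_2$ which guarantees that $A_1^*$ preserves $\ker A_2$; without it the preimage $A_1w$ need not be orthogonal to $\ker A_2$, the projection would be nontrivial, and the two operators could genuinely differ.
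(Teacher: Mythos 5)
Your proof is correct, and its skeleton is the same as the paper's: both arguments observe that $A_1A_2^{-1}z$ and $A_2^{-1}A_1z$ are preimages of $A_1z$ under $A_2$, hence differ by an element of $\ker A_2$, and then conclude by showing that both lie in $(\ker A_2)^\perp$ --- the second one automatically, since $R(A_2^{-1})=(\ker A_2)^\perp$. The one substantive divergence is how the first element is placed in $(\ker A_2)^\perp$. The paper does it in one line from the hypotheses: $A_1A_2^{-1}z\in R(A_1)\subseteq R(A_2)\subseteq \overline{R(A_2)}=(\ker A_2)^\perp$, the last equality by selfadjointness of $A_2$. You instead take adjoints in $A_1A_2=A_2A_1$ to get $A_2A_1^*=A_1^*A_2$, deduce that $A_1^*$ preserves $\ker A_2$, and hence that $A_1$ maps $(\ker A_2)^\perp$ into itself; this correctly shows the minimal-norm selection in the pseudoinverse acts trivially on $A_1A_2^{-1}z$. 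Your variant has the mild advantage that the range inclusion $R(A_1)\subseteq R(A_2)$ is used only to make $A_2^{-1}A_1$ everywhere defined, not for the orthogonality step, so it isolates more cleanly where each hypothesis enters; the paper's version is shorter. Both uses of the selfadjointness of $A_2$ are essential, and both arguments are complete.
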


\begin{proof}
Take $z \in R(A_2)$ and set
$$
v:=A_1 A_2^{-1}z, \qquad u:=A_2^{-1}A_1z.
$$
Applying $A_2$ we get
$$
A_2v=A_2A_1 A_2^{-1}z=A_1 A_2A_2^{-1}z=A_1 z,
\qquad
A_2u=A_2 A_2^{-1}A_1z=A_1 z
$$
where in the first equality we have used the commuting assumption. This means that $A_2(u-v)=0$, i.e. $u-v\in \ker A_2\,$. Now by the definition of pseudoinverse we have $u \in (\ker A_2)^\perp$,
while $v\in R(A_1)\subseteq R(A_2) \subseteq (\ker A_2)^\perp$, since $A_2$ is selfadjoint. Hence
it must be $u-v=0$ and the result follows.
\end{proof}

\section{Controllability operators and minimum energy}
\label{SSE:CONTROLLABOPERATORS}

Following \cite[p. 209]{Zabczyk92}, we collect some basic properties of the controllability operators $Q_t$ defined in \eqref{qt}:

\begin{Proposition}\label{pr:Qt} Let $Q_t$ be defined by \eqref{qt}.
\begin{itemize}
  \item[(i)] The operator
$Q_t$ is linear, bounded, selfadjoint and non-negative.
  \item[(ii)] For $0\le s \le t \le +\infty$ it holds
\begin{equation}\label{eq:kerQtinclusions}
\ker Q_t \subseteq \ker Q_s \subseteq \ker B^*=\ker BB^*
\end{equation}
and each inclusion becomes an equality when $BB^*$ and $A$ commute.
  \item[(iii)] For $0\le s \le t \le +\infty$,
\begin{equation}\label{eq:kerQtperpinclusions}
[\ker Q_t]^\perp \supseteq [\ker Q_s]^\perp \supseteq [\ker B^*]^\perp=
[\ker BB^*]^\perp,
\end{equation}
so that
\begin{equation}\label{eq:ImQtinclusions}
\overline{R(Q_t)} \supseteq \overline{R(Q_s)} \supseteq \overline{R(B)}=\overline{R(BB^*)},
\end{equation}
and each inclusion becomes an equality when $BB^*$ and $A$ commute.
  \item[(iv)] For $0\le t \le \tau \le +\infty$ we have
  \begin{equation}\label{eq:Qttauprima}
Q_\tau= Q_t + e^{tA}Q_{\tau-t}e^{tA^*}.
\end{equation}
  \item[(v)]
Finally, if $A$ is selfadjoint, and $A$ and $BB^*$ commute, we have for all $x \in X$
  \begin{equation}\label{eq:Qtcommutingformula}
Q_t x= \frac12 A^{-1}\left(e^{2tA}- I\right)BB^* x, \quad t>0;
\qquad\qquad Q_\infty x= -\frac12 A^{-1}BB^* x.
\end{equation}
This, in particular, implies that for every $t\in [0,+\infty]$ the operator $Q_t$ commutes with $A$ and that
  \begin{equation}\label{eq:Qtcommutingformulabis}
2Ay =-BB^*Q_\infty^{-1}y \qquad \forall y\in R(Q_\infty) \subseteq D(A).
\end{equation}
\end{itemize}
\end{Proposition}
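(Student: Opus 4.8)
The plan is to work directly from the integral representation \myref{qt}, reducing every claim to elementary properties of the integrand $r \mapsto e^{rA}BB^*e^{rA^*}$, and to invoke the commuting-operator machinery of Lemma \ref{lm:commop} only where the equalities under commutation are asserted. For (i), linearity and selfadjointness are immediate: the integrand is selfadjoint for each $r$ because $BB^*$ is, and integration preserves selfadjointness; boundedness (uniform in $t$, hence also valid for $t=+\infty$) follows from the estimate $\|e^{rA}BB^*e^{rA^*}\|_{\Lc(X)} \le M^2\|BB^*\|_{\Lc(X)}\, e^{-2\omega r}$ coming from Assumption \ref{hp:main}-(ii), whose integral over $[0,+\infty[$ is finite. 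Non-negativity rests on the key identity
$$\langle Q_t z, z\rangle_X = \int_0^t \|B^* e^{rA^*}z\|_U^2\, \ud r \ge 0,$$
which I will record at once, since it also drives part (ii).

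For (ii), since $Q_t\ge 0$ one has $z\in\ker Q_t$ iff $\langle Q_t z,z\rangle_X=0$, i.e., by the displayed identity and continuity in $r$, iff $B^* e^{rA^*}z=0$ for every $r\in[0,t]$. The inclusions $\ker Q_t\subseteq\ker Q_s\subseteq\ker B^*$ then follow by shrinking the interval and setting $r=0$, while $\ker B^*=\ker BB^*$ is the standard fact $\|B^*z\|_U^2=\langle BB^* z,z\rangle_X$. To obtain equality in the commuting case I use Lemma \ref{lm:commop}: if $BB^*$ commutes with $A$, then (passing through the adjoint statement and the semigroup statement of that lemma) $BB^*$ commutes with every $e^{rA^*}$, so $BB^* e^{rA^*}z=e^{rA^*}BB^* z$ vanishes as soon as $BB^* z=0$; hence $\ker B^*\subseteq\ker Q_t$ and the whole chain collapses to equalities. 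Part (iii) is then purely formal: taking orthogonal complements reverses the inclusions of (ii), and applying $\overline{R(F)}=[\ker F^*]^\perp$ to the selfadjoint operators $Q_t$ and $BB^*$ (and to $B$, using $\ker B^*=\ker BB^*$) converts them into the stated closure inclusions.

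Part (iv) is a change of variables: writing $\int_0^\tau=\int_0^t+\int_t^\tau$ and substituting $r=t+\sigma$ in the second integral factors out $e^{tA}$ on the left and $e^{tA^*}$ on the right, yielding $e^{tA}Q_{\tau-t}e^{tA^*}$. For part (v), when $A=A^*$ commutes with $BB^*$ the integrand becomes $e^{2rA}BB^*$, and since $A$ is selfadjoint of negative type we have $0\in\rho(A)$; thus $\int_0^t e^{2rA}\,\ud r=\tfrac12 A^{-1}(e^{2tA}-I)$, and letting $t\to+\infty$ with $e^{2tA}\to 0$ gives the two displayed formulas. The commutation of $Q_t$ with $A$ is then read directly off the formula, each factor $A^{-1}$, $e^{2tA}$, $BB^*$ commuting with $A$.

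The only point requiring genuine care, and the step I expect to be the main obstacle, is the final identity $2Ay=-BB^* Q_\infty^{-1}y$ on $R(Q_\infty)$. From $Q_\infty=-\tfrac12 A^{-1}BB^*$ one sees $R(Q_\infty)\subseteq R(A^{-1})=D(A)$, and for $y=Q_\infty x$ applying $2A$ gives $2Ay=-BB^* x$. The subtlety is that $Q_\infty^{-1}y$ denotes only the \emph{minimal-norm} preimage of $y$, so I must verify that $BB^* x$ does not depend on the chosen preimage. This is precisely where the commuting-case equality $\ker Q_\infty=\ker BB^*$ from part (ii) enters: any two preimages of $y$ differ by an element of $\ker Q_\infty=\ker BB^*$, on which $BB^*$ vanishes, so $BB^* x=BB^* Q_\infty^{-1}y$ and the formula follows.
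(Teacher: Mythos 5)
Your proof is correct and follows essentially the same route as the paper's: the quadratic-form identity $\langle Q_tz,z\rangle_X=\int_0^t\|B^*e^{rA^*}z\|_U^2\,\ud r$ for (i)--(iii), Lemma \ref{lm:commop} for the commuting equalities, the split-and-shift of the integral for (iv), and the reduction of the integrand to $e^{2rA}BB^*$ for (v). Your treatment of the final identity \eqref{eq:Qtcommutingformulabis} (checking that $BB^*$ is constant on the fiber $Q_\infty^{-1}(\{y\})$ because preimages differ by elements of $\ker Q_\infty=\ker BB^*$) is a slightly more explicit variant of the paper's step of applying $2AQ_\infty x=-BB^*x$ directly to $x=Q_\infty^{-1}y$, but it is the same argument in substance.
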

\begin{proof}
The statement (i) is immediate by definition of $Q_t$.

We prove now (ii). Indeed, for every $t \in [0,+\infty]$, since $Q_t$ is selfadjoint we have
$$\begin{array}{rrcll} Q_t x=0 & \Longleftrightarrow & \langle Q_t x,x \rangle_X =0 & \Longleftrightarrow & \displaystyle \int_0^t \|B^*e^{rA^*}x\|_U^2\,\ud r=0 \\[4mm]
& & & \Longleftrightarrow & \|B^*e^{rA^*}x\|_U=0 \quad \textrm{for a.e. } r \in [0,t]. \end{array}$$
The above immediately gives $\ker Q_t \subseteq \ker Q_s $ when $s \le t$. Moreover,
since $r \to \|B^*e^{rA^*}x\|_U$ is continuous, this function is identically $0$,
so the last implies $\|B^*x\|_U=0$. Finally since $BB^*$ is selfadjoint then
$BB^*x=0$ is equivalent to $B^*x=0$.

If $A$ and $BB^*$ commute then, by Lemma \ref{lm:commop} also $A^*$ and $BB^*$ commute and so also $e^{tA^*}$ and $BB^*$.
It follows that, if $BB^*x=0$ then, for all $t>0$,
$$
\<Q_t x,x\>_X=\int_0^t\<BB^*e^{rA^*}x,e^{rA^*}x\>_X\,\ud r
=\int_0^t\<e^{rA^*}BB^*x,e^{rA^*}x\>_X\,\ud r=0,
$$
which gives the claim.

Concerning (iii), (\ref{eq:kerQtperpinclusions}), as well as
(\ref{eq:ImQtinclusions}), easily follow from (\ref{eq:kerQtinclusions}).

The statement (iv) follows by writing
$$
Q_\tau x=\int_0^\tau  e^{rA}BB^*e^{rA^*}\, \ud r=
\int_0^t e^{rA}BB^*e^{rA^*}dr + \int_t^\tau e^{rA}BB^*e^{rA^*}\, \ud r
$$
and then changing variable in the second integral.

The statement (v) follows since in this case, by Lemma \ref{lm:commop},
$e^{tA}$ commutes with $BB^*$. Hence
$$
Q_t x=\int_0^t  e^{rA}BB^*e^{rA}x\, \ud r=
\int_0^t  e^{2rA}BB^*x \,\ud r
$$
and \myref{eq:Qtcommutingformula} follows by standard integration of semigroups.
Concerning the commutativity of $Q_t$ and $A$ we first observe that, by \myref{eq:Qtcommutingformula} we have $R(Q_t) \subseteq D(A)$ for $t\in\, ]0,\infty]$.
Moreover, still by \myref{eq:Qtcommutingformula}, we have, by direct computations
$$
2AQ_tx=2Q_tA x =(e^{2tA}-I)BB^*x, \qquad
2AQ_\infty x=2Q_\infty A x =-BB^*x
$$
for all $x \in D(A)$.
Finally, for any given $y\in R(Q_\infty)$ we set
$x:=Q_\infty^{-1}y\in [\ker Q_\infty]^\perp \subseteq X$ and we write, using the last formula
$$
2AQ_\infty Q_\infty^{-1}y =-BB^*Q_\infty^{-1}y,
$$
which, by the properties of the pseudoinverses, gives \myref{eq:Qtcommutingformulabis}.
\end{proof}


Finally we provide the following, partly well known result, concerning the images of the controllability operators.

\begin{Proposition}\label{monotQt}
Assume that Assumption \ref{hp:main} holds.
\begin{itemize}
  \item[(i)]If $0<t<\tau < \infty$ then $R(Q_t^{1/2}) \subseteq R(Q_\tau^{1/2})\subseteq R(Q_\infty^{1/2})$.
  \item[(ii)]If, in addition, the system (\ref{eq:state-fin-new}) is null-controllable at time $T_0$, i.e. Assumption \ref{NC} holds, then $R(Q_t^{1/2}) = R(Q_\infty^{1/2})$ for all $t\in [T_0,\infty[$.
  \item[(iii)] If $A$ is selfadjoint and commutes with $BB^*$ then, without assuming null controllability,
  for all $t\in\, ]0,\infty[\,$, the equalities $R(Q_t) = R(Q_\infty)$ and
$R(Q_t^{1/2}) = R(Q_\infty^{1/2})$ hold.
\end{itemize}
\end{Proposition}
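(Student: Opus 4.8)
The plan is to treat the three statements in order, with the Douglas-type range criterion of Proposition~\ref{Pr:B.1}(i) and the additivity formula \eqref{eq:Qttauprima} as the main engines. Throughout, the point is that for a non-negative selfadjoint operator $Q$ one has $\|Q^{1/2}x\|_X^2=\langle Qx,x\rangle_X$, so that operator inequalities between the $Q_t$'s translate into norm inequalities for the square roots, which Proposition~\ref{Pr:B.1}(i) converts into range inclusions.

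For (i) I would first use \eqref{eq:Qttauprima} to write $Q_\tau=Q_t+e^{tA}Q_{\tau-t}e^{tA^*}$ and, in the limit $\tau\to\infty$, $Q_\infty=Q_t+e^{tA}Q_\infty e^{tA^*}$; since the added terms are non-negative this gives $Q_t\le Q_\tau\le Q_\infty$. As $Q_t^{1/2},Q_\tau^{1/2},Q_\infty^{1/2}$ are selfadjoint, these read $\|Q_t^{1/2}x\|_X\le\|Q_\tau^{1/2}x\|_X\le\|Q_\infty^{1/2}x\|_X$ for all $x\in X$, and Proposition~\ref{Pr:B.1}(i) (applied with $A_1=Q_t^{1/2}$, $A_2=Q_\tau^{1/2}$, and again with $A_2=Q_\infty^{1/2}$) yields both inclusions at once.

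For (ii), by (i) it suffices to prove the reverse inclusion $R(Q_\infty^{1/2})\subseteq R(Q_{T_0}^{1/2})$: once this holds, the chain $R(Q_{T_0}^{1/2})\subseteq R(Q_t^{1/2})\subseteq R(Q_\infty^{1/2})$ from (i) collapses to equalities for every $t\ge T_0$. To obtain the inclusion I would again invoke Proposition~\ref{Pr:B.1}(i) and produce a constant $k$ with $\|Q_\infty^{1/2}x\|_X\le k\,\|Q_{T_0}^{1/2}x\|_X$. Starting from $Q_\infty=Q_{T_0}+e^{T_0A}Q_\infty e^{T_0A^*}$ one gets the identity
$$\|Q_\infty^{1/2}x\|_X^2=\|Q_{T_0}^{1/2}x\|_X^2+\|Q_\infty^{1/2}e^{T_0A^*}x\|_X^2.$$
The null controllability Assumption~\ref{NC}, namely $R(e^{T_0A})\subseteq R(Q_{T_0}^{1/2})$, fed once more into Proposition~\ref{Pr:B.1}(i), supplies $c>0$ with $\|e^{T_0A^*}x\|_X\le c\,\|Q_{T_0}^{1/2}x\|_X$; bounding the last term by $\|Q_\infty^{1/2}\|_{\Lc(X)}^2\|e^{T_0A^*}x\|_X^2$ then gives the estimate with $k^2=1+c^2\|Q_\infty^{1/2}\|_{\Lc(X)}^2$.

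For (iii) I would exploit the explicit commuting formula \eqref{eq:Qtcommutingformula} to factor, for every $t\in\,]0,\infty[\,$,
$$Q_t=(I-e^{2tA})\,Q_\infty=Q_\infty\,(I-e^{2tA}).$$
Since $A$ is selfadjoint of negative type, $e^{2tA}$ is positive with $\|e^{2tA}\|_{\Lc(X)}<1$, so by functional calculus $I-e^{2tA}$ is a positive, boundedly invertible operator, commuting with $Q_\infty$ (both being functions of $A$, which commutes with $BB^*$). A boundedly invertible operator that commutes with $Q_\infty$ maps $R(Q_\infty)$ onto itself — indeed both $I-e^{2tA}$ and its inverse send $R(Q_\infty)$ into $R(Q_\infty)$ — whence $R(Q_t)=(I-e^{2tA})R(Q_\infty)=R(Q_\infty)$. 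For the square roots I would note that, the two commuting factors being positive, $(I-e^{2tA})^{1/2}Q_\infty^{1/2}$ is the positive square root of $Q_t$, i.e. $Q_t^{1/2}=(I-e^{2tA})^{1/2}Q_\infty^{1/2}$, and repeat the isomorphism argument with $(I-e^{2tA})^{1/2}$ in place of $I-e^{2tA}$.

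The Hilbert-space inequalities in (i) and the functional-calculus factorization in (iii) are routine; the one step deserving genuine care is the reverse inclusion in (ii), where the full strength of the null controllability hypothesis is used and where one must verify that the ``tail'' term $\|Q_\infty^{1/2}e^{T_0A^*}x\|_X$ is controlled by $\|Q_{T_0}^{1/2}x\|_X$ and not merely by the larger quantity $\|Q_\infty^{1/2}x\|_X$.
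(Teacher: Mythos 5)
Your proposal is correct, and for parts (i) and (ii) it essentially reconstructs the standard arguments that the paper does not reproduce but delegates to the literature (Zabczyk's book for (i), Da Prato--Pritchard--Zabczyk for (ii)): monotonicity $Q_t\le Q_\tau\le Q_\infty$ from \eqref{eq:Qttauprima} plus Proposition \ref{Pr:B.1}(i) for (i), and the decomposition $\|Q_\infty^{1/2}x\|_X^2=\|Q_{T_0}^{1/2}x\|_X^2+\|Q_\infty^{1/2}e^{T_0A^*}x\|_X^2$ combined with the dual form $\|e^{T_0A^*}x\|_X\le c\|Q_{T_0}^{1/2}x\|_X$ of Assumption \ref{NC} for (ii). Your closing remark correctly identifies the delicate point there: the tail must be absorbed by $\|Q_{T_0}^{1/2}x\|_X$, which is exactly what null controllability provides.

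For part (iii) your route genuinely differs from the paper's. The paper also starts from $Q_tx=Q_\infty(x-e^{2tA}x)$, but instead of inverting $I-e^{2tA}$ it derives the estimate $\|Q_\infty x\|_X\le\|Q_tx\|_X+Me^{-2\omega t}\|Q_\infty x\|_X$, which only closes for $t>T_1$ with $Me^{-2\omega T_1}=1$; small $t$ are then reached by the doubling identity $Q_{2t}x=Q_t(x+e^{2tA}x)$ iterated until $2^{\bar k}t>T_1$, and the whole scheme is repeated for the square roots. Your argument replaces all of this by the observation that $I-e^{2tA}$ is positive and boundedly invertible for \emph{every} $t>0$, so that $R(Q_t)=(I-e^{2tA})R(Q_\infty)=R(Q_\infty)$ and $Q_t^{1/2}=(I-e^{2tA})^{1/2}Q_\infty^{1/2}$ in one stroke. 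This is cleaner, but note precisely where it uses selfadjointness beyond what the paper's estimate uses: the crude bound $\|e^{2tA}\|_{\Lc(X)}\le Me^{-2\omega t}$ with $M>1$ does not give $\|e^{2tA}\|_{\Lc(X)}<1$ for small $t$; you need that for selfadjoint $A$ the growth bound equals the spectral bound, so $\sigma(e^{2tA})\subseteq[0,e^{-2\omega t}]$ and $\sigma(I-e^{2tA})\subseteq[1-e^{-2\omega t},1]$ is bounded away from $0$ for all $t>0$. Since (iii) assumes $A$ selfadjoint anyway, this is legitimate, and it is exactly what makes the paper's doubling detour unnecessary in your version; it would be worth stating that spectral fact explicitly. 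The remaining ingredients you invoke (commutativity of $e^{2tA}$ with $Q_\infty$ via Lemma \ref{lm:commop}, uniqueness of the positive square root of a product of commuting positive operators) are sound.
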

\begin{proof} The results (i) and (ii) concerning the images of
the operators $Q_t^{1/2}$, $t\in[0,\infty]$ are well known: see e.g., for point (i)
the proof of Theorem 2.2 in Part IV, Chapter 2 of \cite{Zabczyk92}; for point (ii) the proof of Theorem 2.2 in
\cite{DaPratoPritchardZabczyk91}.

We now prove (iii). For all $0<t<\tau\le \infty$ we have, using \myref{eq:Qttauprima},
the selfadjointness of $A$ and the commutativity,
\begin{equation}\label{eq:Qttau}
Q_\tau= Q_t + e^{2tA}Q_{\tau-t}=Q_t +Q_{\tau-t} e^{2tA};
\end{equation}
hence, if $\tau=\infty$ we get, for all $x \in H$ and $t\ge 0$,
\begin{equation}\label{eq:Qtinfty}
Q_t x=Q_\infty x -Q_{\infty} e^{2tA}x=Q_\infty(x-  e^{2tA}x).
\end{equation}
Thus we immediately get $R(Q_t)\subseteq R(Q_\infty)$ for all $t\ge 0$.
On the other hand we have, for $x \in H$ and $t\ge 0$,
$$
Q_\infty x=Q_t x - e^{2tA}Q_{\infty} x,
$$
which implies, for all $x \in H$ and $t\ge 0$,
$$
\|Q_\infty x \|_X\le \|Q_t x\|_X + Me^{-2\omega t}\|Q_{\infty}x\|_X\,.
$$
Let $T_1\ge 0$ be such that $Me^{-2\omega T_1}=1$. Then for all $t>T_1$ the above implies
$$
\|Q_\infty x \|_X\le \frac{1}{1-Me^{-2\omega t}}\|Q_t x\|_X\,.
$$
Using Proposition \ref{Pr:B.1}-(i) this implies that $R(Q_t)=R(Q_\infty)$ for all $t>T_1$.
If $T_1=0$ the claim is proved. If $T_1>0$ take $t\le T_1$. We have, taking $\tau=2t$
in \myref{eq:Qttau},
\begin{equation}\label{eq:Qtt}
Q_{2t}x= Q_t x+ Q_{t} e^{2tA}x=Q_t (x+ e^{2tA}x), \quad x \in H.
\end{equation}
This implies that $R(Q_{2t})\subseteq R(Q_t)$. Iterating this argument we see
that it must be $R(Q_{2^k t})\subseteq R(Q_t)$ for all $k \in \N$.
Taking $\bar k$ such that $2^{\bar k} t>T_1$ we then get
$R(Q_\infty)=R(Q_{2^{\bar k}  t})\subseteq R(Q_t)$. This proves the claim.

Concerning the last statement we observe that, by \myref{eq:Qtinfty}
and since $e^{tA}$ commutes with $Q^{1/2}_{\infty}$, too, we may write
\begin{eqnarray*}
\|Q^{1/2}_\infty x \|_X^2 & = & \<Q_\infty x, x\>_X= \<Q_t x,x\>_X - \<e^{2tA}Q_{\infty} x,x\>_X \\
& = & \<Q_t x,x\>_X -  \<e^{tA}Q^{1/2}_{\infty} x,e^{tA}Q^{1/2}_{\infty} x\>_X \\
& = & \|Q^{1/2}_t x \|_X^2 +\|e^{tA}Q^{1/2}_{\infty} x\|_X^2 \le \|Q^{1/2}_t x \|_X^2 +Me^{-\omega t}\|Q^{1/2}_{\infty} x\|_X^2.
\end{eqnarray*}
Hence, taking $T_2$ such that $Me^{-\omega T_2}=1$ (i.e. $T_2=2T_1$), for $t>T_2$
we get
$$
\|Q^{1/2}_\infty x \|_X^2 \le \frac{1}{1-Me^{-\omega t}}\|Q^{1/2}_t x \|_X^2
$$
which gives $R(Q^{1/2}_\infty )\subseteq R(Q^{1/2}_t )$, and hence $R(Q^{1/2}_\infty )= R(Q^{1/2}_t )$, for $t>T_2$. If $T_2=0$ the claim follows.
Otherwise, using \myref{eq:Qtt}, we have
\begin{eqnarray*}
\|Q^{1/2}_{2t} x \|_X^2 & = & \<Q_{2t} x, x\>_X= \<Q_t x,x\>_X + \<e^{2tA}Q_{t} x,x\>_X \\
& = & \<Q_t x,x\>_X +  \<e^{tA}Q^{1/2}_{t} x,e^{tA}Q^{1/2}_{t} x\>_X \\
& = & \|Q^{1/2}_t x \|_X^2 +\|e^{tA}Q^{1/2}_{t} x\|_X^2 \le \|Q^{1/2}_t x \|_X^2 +Me^{-\omega t}\|Q^{1/2}_{t} x\|_X^2.
\end{eqnarray*}
Hence, arguing as above we get $R(Q^{1/2}_\infty )=R(Q^{1/2}_t )$ for $t>0$.
\end{proof}

We have the following result about the optimal pairs when $x \in R(Q_t)$.
\begin{Proposition}\label{pr:optcouplet}
Let $x \in R(Q_t)$. Let $(\hat y_{t,x},\hat u_{t,x})$ be the optimal pair in $[-t,0]$. Then we have
\begin{equation}\label{eq:optcontrt}
\hat u_{t,x}(r) = B^* e^{-rA^*} { Q}^{-1}_{t} x
\quad \forall r\in [-t,0],
\end{equation}
with ${ Q}^{-1}_t$ defined as in Theorem \ref{th:th23Zab} (iii). Moreover the corresponding optimal state $\hat y$ satisfies
\begin{equation}\label{eq:optstatet}
\hat y_{t,x}(r)=Q_{t+r}e^{-rA^*}Q_t^{-1}x, \qquad r \in [-t,0];
\end{equation}
hence the optimal pair satisfies the feedback formula
\begin{equation}\label{eq:optfeedbackt}
\hat u_{t,x}(r)=B^* Q_{t+r}^{-1}\hat y_{t,x}(r), \qquad r \in \,]-t,0],
\end{equation}
and, formally, $\hat y_{t,x}$ is a solution of the backward closed loop equation (BCLE)
\begin{equation}\label{eq:CLEt}
y'(r)=(A+BB^* Q_{t+r}^{-1}) y(r) , \qquad r \in\, ]-t,0]
\end{equation}
with final condition $y(0)=x$.
\end{Proposition}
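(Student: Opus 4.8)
The plan is to set up the problem on the interval $[-t,0]$ (so that the state solves \myref{vardcost} with $y(-t)=0$ and the target constraint is $y(0)=x$) and to recognize the optimal control as the minimum-norm solution of a linear equation. First I would introduce the controllability map $\mathcal{L}:L^2(-t,0;U)\to X$, $\mathcal{L}u=\int_{-t}^0 e^{-\sigma A}Bu(\sigma)\,\ud\sigma$, so that the constraint reads $\mathcal{L}u=x$. Computing its adjoint gives $(\mathcal{L}^*z)(r)=B^*e^{-rA^*}z$, and the substitution $\rho=-\sigma$ in \myref{qt} yields the basic identity $\mathcal{L}\mathcal{L}^*=Q_t$. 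The candidate optimal control is then $\mathcal{L}^*Q_t^{-1}x$, which is exactly \myref{eq:optcontrt}.

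To see that this candidate is the optimal control I would verify feasibility and minimality separately. Feasibility is $\mathcal{L}(\mathcal{L}^*Q_t^{-1}x)=Q_tQ_t^{-1}x=x$, where the last equality uses $x\in R(Q_t)$ and the definition of the pseudoinverse. Minimality follows from the orthogonal decomposition: since $\mathcal{L}^*Q_t^{-1}x\in R(\mathcal{L}^*)\subseteq[\ker\mathcal{L}]^\perp$, any other feasible $u$ differs from it by an element of $\ker\mathcal{L}$ orthogonal to it, whence $\|u\|_{L^2}^2=\|\mathcal{L}^*Q_t^{-1}x\|_{L^2}^2+\|u-\mathcal{L}^*Q_t^{-1}x\|_{L^2}^2$. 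By the uniqueness of the minimizer in Theorem \ref{th:th23Zab}(ii), the candidate coincides with $\hat u_{t,x}$, proving \myref{eq:optcontrt}.

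Next I would obtain the optimal state by inserting $\hat u_{t,x}$ into \myref{vardcost}, getting $\hat y_{t,x}(r)=\int_{-t}^r e^{(r-\sigma)A}BB^*e^{-\sigma A^*}Q_t^{-1}x\,\ud\sigma$; the change of variable $\rho=r-\sigma$ and the semigroup identity $e^{(\rho-r)A^*}=e^{\rho A^*}e^{-rA^*}$ (legitimate because $\rho\ge0$ and $-r\ge0$) factor out $e^{-rA^*}Q_t^{-1}x$ and rebuild the integral defining $Q_{t+r}$, yielding \myref{eq:optstatet}. For the feedback law \myref{eq:optfeedbackt} I would use that $\hat y_{t,x}(r)=Q_{t+r}e^{-rA^*}Q_t^{-1}x\in R(Q_{t+r})$, so that $Q_{t+r}^{-1}\hat y_{t,x}(r)$ is the projection of $e^{-rA^*}Q_t^{-1}x$ onto $[\ker Q_{t+r}]^\perp$; the part projected away lies in $\ker Q_{t+r}\subseteq\ker B^*$ by \myref{eq:kerQtinclusions}, so applying $B^*$ returns precisely $\hat u_{t,x}(r)$. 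Finally, \myref{eq:CLEt} is obtained by inserting the feedback law into $\hat y_{t,x}'=A\hat y_{t,x}+B\hat u_{t,x}$, the final condition being $\hat y_{t,x}(0)=Q_tQ_t^{-1}x=x$.

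I expect the only genuinely delicate steps to be the feedback identity and the status of the closed-loop equation. The feedback formula \myref{eq:optfeedbackt} hinges on the inclusion $\ker Q_{t+r}\subseteq\ker B^*$, which is exactly what forces $B^*Q_{t+r}^{-1}Q_{t+r}$ to act as $B^*$ on the vector at hand, so the pseudoinverse/projection bookkeeping must be handled with care (and note it fails at $r=-t$, where $Q_0=0$). The word \emph{formally} in \myref{eq:CLEt} is a real caveat: differentiating the mild solution requires $\hat y_{t,x}(r)\in D(A)$, which is not guaranteed in general, so I would present the backward closed-loop equation only in this formal (or equivalently integral/mild) sense.
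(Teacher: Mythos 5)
Your proposal is correct and follows essentially the same route as the paper: insert the optimal control into the variation-of-constants formula, change variables to recognize $Q_{t+r}$, and use $\ker Q_{t+r}\subseteq \ker B^*$ together with the pseudoinverse-as-projection identity to get the feedback law (the paper merely says this follows from $\hat y_{t,x}(r)\in R(Q_{t+r})$, whereas you spell out the projection bookkeeping, which is the right level of care). The only difference is that for \myref{eq:optcontrt} the paper simply cites \cite[Theorem 2.3-(iii)]{Zabczyk92}, while you reprove it via the minimum-norm characterization $\hat u=\mathcal{L}^*Q_t^{-1}x$ with $\mathcal{L}\mathcal{L}^*=Q_t$ --- this is the standard argument behind the cited result, so it makes your write-up self-contained without changing the substance.
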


\begin{proof}
Formula (\ref{eq:optcontrt}) follows from \cite[Theorem 2.3-(iii), page 210]{Zabczyk92}.

Formula (\ref{eq:optstatet}) follows by inserting (\ref{eq:optcontrt}) into
the state equation:
\begin{eqnarray*}
\hat y_{t,x}(r) & = & \int_{-t}^r e^{(r-s)A}B \hat u_{t,x}(s)\,\ud s =
 \int_{-t}^r e^{(r-s)A}B B^* e^{-sA^*} { Q}^{-1}_{t} x \,\ud s \\
& = & \left( \int_{-t}^r e^{(r-s)A}B B^* e^{(r-s)A^*}\, \ud s \right)e^{-rA^*} { Q}^{-1}_{t} x =Q_{t+r}e^{-rA^*} Q^{-1}_{t} x;
\end{eqnarray*}
moreover formula (\ref{eq:optfeedbackt}) follows by simply observing that $\hat y_{t,x}(r) \in R(Q_{t+r})$ for every $r \in [-t,0]$, and using (\ref{eq:optcontrt})-(\ref{eq:optstatet}).\\
Inserting (\ref{eq:optfeedbackt}) into the state equation (\ref{eq:state-fin-new}) we see that, formally, $\hat y_{t,x}$ is a solution of the BCLE (\ref{eq:CLEt}).\\
\end{proof}

\begin{Remark}
\label{rm:optcoupleslyap}
{\rm
It is not hard to show that the above result holds true also in the case when $t=+\infty$. So we have for the optimal pair the representations
\begin{equation}\label{eq:optcontrtinfty}
\hat u_{\infty,x}(r) = B^* e^{-rA^*} { Q}^{-1}_{\infty} x
\qquad  r\in\, ]-\infty,0],
\end{equation}
\begin{equation}\label{eq:optstatetinfty}
\hat y_{\infty,x}(r)=Q_{\infty}e^{-rA^*}Q_\infty^{-1}x, \qquad r \in \,]-\infty,0];
\end{equation}
and the feedback formula
\begin{equation}\label{eq:optfeedbacktinfty}
\hat u_{\infty,x}(r)=B^* Q_{\infty}^{-1}\hat y_{\infty,x}(r), \qquad r \in \,]-\infty,0].
\end{equation}
Thus, formally, $\hat y_{\infty,x}$ is a solution of the backward closed loop equation (BCLE)
\begin{equation}\label{eq:CLEtinfty}
y'(r)=(A+BB^* Q_{\infty}^{-1}) y(r) , \qquad r \in \,]-\infty,0]
\end{equation}
with final condition $y(0)=x$.
Using the Lyapunov equation \myref{eq:LyapunovX} proved in Proposition \ref{qtlyap}, the above (\ref{eq:CLEtinfty}) can be simplified as
\begin{equation}\label{eq:CLEtinftybis}
y'(r)=-Q_{\infty}A^*Q_{\infty}^{-1} y(r) , \qquad r \in \,]-\infty,0].
\end{equation}
Hence, if $A^*$ commutes with $Q_\infty$ (e.g. when $A$ is selfadjoint,
and $A$ and $BB^*$ commute), then the BCLE \myref{eq:CLEtinfty}
becomes
$$
y'(r)=-A^* y(r) , \qquad r \in \,]-\infty,0].
$$
which is well posed and is solved by the optimal trajectory.
The same argument can be used in the finite horizon case of Proposition \ref{pr:optcouplet}
to rewrite \myref{eq:CLEt} but, due to the presence of $Q_t^\prime$ in the Lyapunov equation \myref{eq:Lyapdiff}, the result is not so useful.} \hfill\qedo
\end{Remark}
We now give a counterexample\footnote{We are indebted to Giorgio Fabbri for this example.} in the case where the null controllability Assumption \ref{NC} does not hold.
\begin{Example}
{\em
Let us consider the Hilbert spaces $X=L^2(0,1)$ and $U= \mathbb{R}$. The operator
$$\left\{ \begin{array}{l} D(A)= \{ f\in H^1(0,1): f(0)=0 \} \\[1mm]
Af=-f' \end{array} \right. $$
is the infinitesimal generator in $L^2(0,1)$ of the $C_0$-semigroup
(see e.g. \cite[Chapter I, Section 4.c]{EngelNagelbook}, or
\cite{BarucciGozzi01}):
$$(e^{tA}f)(s) := \left\{ \begin{array}{lll}
f(s-t) & \text{if }& s > t \\[1mm]
0 & \text{if }& s\le t,\end{array}\right.\qquad s\in\, ]0,1[\,, \quad t\ge 0,$$
or, in other words,
$$(e^{tA}f)(\cdot)= f(\cdot-t)\, \chi_{[t,1]}(\cdot).$$
Next, let $B:\mathbb{R} \to L^2(0,1)$ be defined by
$$B(a) := a \, \chi_{[0,1/4]}(\cdot).$$
Consider the state equation
$$\left\{ \begin{array}{l} y'(s) = A y(s) + B u(s)\\
y(0)=0. \end{array}\right.$$
For any fixed $t\in \,]0,1]$, we have $f\in Q_t^{1/2}(L^2(0,1))$ if and only if there exists $u\in L^2(0,t)$ such that
$$f = \int_0^t e^{(t-r)A}Bu(r)\, \ud r.$$
By the definition of $B$ and the explicit form of $e^{tA}$ we easily get
$$f(\cdot) = \int_0^t u(r) \, \chi_{[0,1/4]}(\cdot - t+r) \chi_{[t-r,1]}(\cdot)\,\ud r = \int_0^t u(r) \, \chi_{[t-r,1\wedge (t-r+1/4)]}(\cdot)\, \ud r.$$
Now fix $t=1/4$: then if $f\in Q_{1/4}^{1/2}(L^2(0,1))$ it holds
$$f(s) = \int_0^{1/4} u(r) \, \chi_{[1/4-r,1/2-r]}(s)\, \ud r,$$
so that necessarily $f(s)=0$ for all $s\in \,]1/2,1]$.
On the other hand, take $t=1$ and $u\equiv 1\in L^2(0,1)$; then if $f = \int_0^1 e^{(1-r)A}Bu(r)\, \ud r$ we have in particular $f\in Q_1^{1/2}(L^2(0,1))$ and
$$f(s) = \int_0^1 \chi_{[1-r,1\wedge(5/4-r)]}(s)\,\ud r = \int_0^1 \chi_{[1-s,1\wedge(5/4-s)]}(r)\,\ud r = s\wedge \frac14  \quad \forall s\in [0,1].$$
This shows that $f$ cannot belong to $Q_{1/4}^{1/2}(L^2(0,1))$, i.e.
$$Q_1^{1/2}(L^2(0,1)) \nsubseteq Q_{1/4}^{1/2}(L^2(0,1)),$$
and in particular, the system cannot be null controllable at any $T\in \,]0,1/4]$.
}
\hfill\qedo
\end{Example}

\section{Proofs}\label{SSE:PROOFS}

{\bf Proof of Lemma \ref{HH}}.\\[1mm]
We start proving (i). Let $\{x_n\}$ be a Cauchy sequence in $H$: then for each $n$ we have $x_n = Q_\infty^{1/2}z_n$, where $z_n\in [\ker Q_\infty^{1/2}]^\perp$ is uniquely determined, and by (\ref{nH}) $\{Q_\infty^{-1/2}x_n\}=\{z_n\}$ is a Cauchy sequence in $X$, so that it converges to some $z\in [\ker Q_\infty^{1/2}]^\perp$. As $Q_\infty^{1/2}\in {\cal L}(X)$, $\{x_n\}$ is a Cauchy sequence in $X$, too, and it converges to some $x\in X$. It follows that $Q_\infty^{1/2}z = x$, so that $x\in H$ and $x_n \to x$ in $H$. This shows that $H$ is complete. To prove that $H$ is continuously embedded into $X$, take $x\in H$: then $x=Q_\infty^{1/2}y$ for a unique $y= Q_\infty^{-1/2}x \in [\ker Q_\infty^{1/2}]^\perp$. Thus
$$\|x\|_X = \|Q_\infty^{1/2}y\|_X \le \|Q_\infty^{1/2}\|_{{\cal L}(X)}\|y\|_X = \|Q_\infty^{1/2}\|_{{\cal L}(X)}\|Q_\infty^{-1/2}x\|_X = \|Q_\infty^{1/2}\|_{{\cal L}(X)}\|x\|_H\,.$$

Concerning (ii), let $x \in H$. Then there exists a unique $z \in [\ker Q_\infty^{1/2}]^\perp =\overline{R(Q_\infty^{1/2})} $ such that $x=Q_\infty^{1/2}z$.
So there exists a sequence $\{z_n\}\subset R(Q_\infty^{1/2})$ such that
$z_n \to z$ in $X$ as $n \to + \infty$. Setting $x_n=Q_\infty^{1/2}z_n$ we have as $n\to \infty$
$$
\|x_n-x\|_H =\|Q_\infty^{1/2}(z_n-z)\|_H =\|z_n-z\|_X \to 0,
$$
and the claim follows.\\[1mm]
The statement (iii) follows from (\ref{nH}) by just taking $x=y$.\\[0.5mm]
To prove the statement (iv) we observe first that, for all $x \in H$ with $x=Q^{1/2}_\infty z$,
$z \in X$,
$$
\frac{\|Q^{1/2}_\infty x\|_H}{\|x\|_H}=
\frac{\|x\|_X}{\|z\|_X}\frac{\|Q^{1/2}_\infty z\|_X}{\|z\|_X},
$$
which implies $\|Q^{1/2}_\infty \|_{\Lc(H)}\le \|Q^{1/2}_\infty \|_{\Lc(X)}$.
On the other hand, if $z_n\in X$ is such that
$$
\frac{\|Q^{1/2}_\infty z_n\|_X}{\|z_n\|_X}>\|Q^{1/2}_\infty \|_{\Lc(X)}-\frac1n
$$
then, setting $x_n=Q^{1/2}_\infty z_n \in H$, we have
$$
\|Q^{1/2}_\infty \|_{\Lc(H)}\ge \frac{\|Q^{1/2}_\infty x_n\|_H}{\|x_n\|_H}
=
\frac{\|Q^{1/2}_\infty z_n\|_X}{\|z_n\|_X}>
\|Q^{1/2}_\infty \|_{\Lc(X)}-\frac1n.
$$
which gives the claim.\\[1mm]
Finally, (v) follows by observing that
$Q_\infty^{-1/2}F$ is a well defined closed linear operator from $X$ to $X$ and applying the closed graph theorem.\hfill \qed \\[2mm]
{\bf Proof of Lemma \ref{dens}}\\[1mm]
We just consider the case $t=\infty$, since the case $T_0\le t <\infty$ is quite similar. Fix $x\in H$. Then there is a unique $z\in [\ker Q_\infty]^\perp$ such that $Q_\infty^{1/2}z=x$. As $[\ker Q_\infty]^\perp = \overline{R(Q_\infty)}=\overline{R(Q_\infty^{1/2})}$, there exists $\{z_n\}\subset X$ such that $Q_\infty^{1/2}z_n \to z$ in $X$. Since $D(A^*)$ is dense in $X$, for each $n\in \N^+$ we can find $y_n \in D(A^*)$ such that $\|y_n-z_n\|_X<1/n$, so that $Q_\infty^{1/2}y_n \to z$ in $X$, too. Hence
$$\|Q_\infty y_n - x\|_H = \|Q_\infty^{1/2} y_n - Q_\infty^{-1/2}x\|_X = \|Q_\infty^{1/2} y_n - z\|_X\to 0,$$
i.e. $x$ belongs to the closure of $Q_\infty(D(A^*))$ in $H$.
The density of $D(A) \cap H$ follows since, by Lemma \ref{proreg}-(i), we have
$Q_\infty(D(A^*))\subset D(A)\cap H$. Finally, the last statement follows by
Lemma \ref{proreg}-(iv).\hfill \qed \\[2mm]
{\bf Proof of Lemma \ref{relinH}}\\[1mm]
We recall that for $0<t\le \infty$ we have
\begin{equation}\label{pseu}
Q_t^{1/2}Q_t^{-1/2}z = z \quad \forall z\in R(Q_t^{1/2}), \qquad Q_t^{-1/2}Q_t^{1/2}x = P_{[\ker Q_t]^\perp}\,x \quad \forall x\in X.
\end{equation}
By (\ref{pseu}), using also the fact that $Q_t^{1/2}$ is selfadjoint in $X$, we get (i):
\begin{eqnarray*}
\langle z, Q_t^{-1/2}w\rangle_X & = & \langle Q_t^{1/2}Q_t^{-1/2}z, Q_t^{-1/2}w\rangle_X \\
& = & \langle Q_t^{-1/2}z, Q_t^{1/2}Q_t^{-1/2}w\rangle_X = \langle Q_t^{-1/2}z, w\rangle_X\qquad \forall z,w\in R(Q_t^{1/2}).
\end{eqnarray*}
About (ii), we have by (\ref{pseu}), (\ref{nH}) and (i):
\begin{eqnarray*}
\langle Q_\infty^{1/2}x,y\rangle_H & = & \langle Q_\infty^{1/2}x,Q_\infty^{1/2}Q_\infty^{-1/2}y\rangle_H = \langle x, Q_\infty^{-1/2}y\rangle_X = \langle Q_\infty^{-1/2}x, y\rangle_X \\[1mm]
& = & \langle Q_\infty^{-1/2}x, P_{[\ker Q_\infty]^\perp}\,y \rangle_X = \langle Q_\infty^{-1/2}x, Q_\infty^{-1/2}Q_\infty^{1/2}y\rangle_X \\[0.5mm]
& = & \langle x, Q_\infty^{1/2}y\rangle_H \qquad \forall x,y\in H.
\end{eqnarray*}
Finally, (iii) follows immediately by applying (ii) twice.\hfill \qed \\[2mm]
{\bf Proof of Lemma \ref{composQt}}\\[1mm]
For fixed $t\ge T_0$, let $x\in H$ be such that $Q_t^{1/2}Q_\infty^{-1/2}x=0$: then, by (\ref{pseu}),
$$Q_\infty^{-1/2}x = Q_t^{-1/2}(Q_t^{1/2}Q_\infty^{-1/2}x) = P_{[\ker Q_t]^\perp}Q_\infty^{-1/2}x =0,$$
which implies $Q_\infty^{-1/2}x \in \ker Q_t= \ker Q_\infty$. On the other hand, by definition we also have $Q_\infty^{-1/2}x \in [\ker Q_\infty]^\perp$, so that $Q_\infty^{-1/2}x=0$ and consequently $x=Q_\infty^{1/2}(Q_\infty^{-1/2}x)=0$. This proves that $Q_t^{1/2}Q_\infty^{-1/2}$ is one-to-one. Moreover, for each $y\in H$ the equation $Q_t^{1/2}Q_\infty^{-1/2}x=y$ is equivalent to $P_{[\ker Q_t]^\perp}Q_\infty^{-1/2}x = Q_t^{-1/2}y$; but since $Q_\infty^{-1/2}x\in [\ker Q_\infty]^\perp =[\ker Q_t]^\perp$, we deduce $Q_\infty^{-1/2}x = Q_t^{-1/2}y$ and hence $x=Q_\infty^{1/2}Q_t^{-1/2}y$. This shows that
$Q_t^{1/2}Q_\infty^{-1/2}$ is surjective.\\
We now claim that $Q_t^{1/2}Q_\infty^{-1/2}$ has closed graph in $H\times H$. Indeed, let $\{x_n\}$ be a sequence in $H$ such that $(x_n,Q_t^{1/2}Q_\infty^{-1/2}x_n)\to (x,y)$ in $H\times H$. This means, by definition,
\begin{equation}\label{conv1}
Q_\infty^{-1/2}x_n \to Q_\infty^{-1/2}x \quad \textrm{in } X,
\end{equation}
and
\begin{equation}\label{conv2}
Q_\infty^{-1/2}Q_t^{1/2}Q_\infty^{-1/2}x_n \to Q_\infty^{-1/2}y \quad \textrm{in } X;
\end{equation}
if we apply $Q_t^{1/2}$ to (\ref{conv1}) we obtain
$$Q_t^{1/2}Q_\infty^{-1/2}x_n \to Q_t^{1/2}Q_\infty^{-1/2}x \quad \textrm{in } X,$$
whereas if we apply $Q_\infty^{1/2}$ to (\ref{conv2}) we get
$$Q_t^{1/2}Q_\infty^{-1/2}x_n \to y \quad \textrm{in } X,$$
so that $y=Q_t^{1/2}Q_\infty^{-1/2}x$ and our claim is proved. By Lemma \ref{HH} it follows that $Q_t^{1/2}Q_\infty^{-1/2}\in {\cal L}(H)$.
Finally, the inverse $Q_\infty^{1/2}Q_t^{-1/2}$ is also in ${\cal L}(H)$ by the same argument, or by the open mapping theorem.

The proof of the second statement is quite analogous.\hfill \qed\\[2mm]
{\bf Proof of Lemma \ref{composQt2}}\\[1mm]
It is clear that $Q_t^{-1/2}Q_\infty^{1/2}$ maps $X$ into $X$ and vanishes on $\ker Q_\infty=\ker Q_\infty^{1/2}$. Moreover if $y\in [\ker Q_\infty]^\perp$ then $x = Q_\infty^{-1/2}Q_t^{1/2} y$ is in $[\ker Q_\infty]^\perp$ and satisfies $Q_t^{-1/2}Q_\infty^{1/2}x=y$, so that $Q_t^{-1/2}Q_\infty^{1/2}$ is one-to-one from $[\ker Q_\infty]^\perp$ onto itself. \\
Now we prove that $Q_t^{-1/2}Q_\infty^{1/2}$ has closed graph. Let $(x_n,Q_t^{-1/2}Q_\infty^{1/2}x_n) \to (x,y)$ in $X\times X$: then $x_n \to x$ in $X$ and $Q_t^{-1/2}Q_\infty^{1/2}x_n \to y$ in $X$, so that $y\in [\ker Q_\infty]^\perp$. It follows that
$$Q_\infty^{1/2}x_n \to Q_\infty^{1/2}x$$
and also
$$Q_\infty^{1/2}x_n = Q_t^{1/2}[Q_t^{-1/2}Q_\infty^{1/2}x_n]\to Q_t^{1/2}y,$$
so that $Q_\infty^{1/2}x=Q_t^{1/2}y$; but since $y\in [\ker Q_\infty]^\perp$, we deduce $y=Q_t^{-1/2}Q_\infty^{1/2}x$. Thus $Q_t^{-1/2}Q_\infty^{1/2}\in {\cal L}(X)$ and the result follows.
\hfill \qed\\[2mm]
{\bf Proof of Lemma \ref{aggH}}\\[1mm]
Indeed, by (\ref{nH}) and Lemma \ref{relinH} (ii)-(iii),
\begin{eqnarray*}
\langle Lx,y\rangle_H & = & \langle Q_\infty^{-1/2}Lx,Q_\infty ^{-1/2}y\rangle_X = \langle Lx,Q_\infty ^{-1}y\rangle_X \\[1mm]
& = & \langle x,L^*Q_\infty ^{-1}y\rangle_X = \langle Q_\infty^{1/2}x,Q_\infty^{1/2}L^*Q_\infty ^{-1}y\rangle_H = \langle x,Q_\infty L^*Q_\infty^{-1}y\rangle_H\,.
\end{eqnarray*}
\hfill\qed\\[2mm]
{\bf Proof of Proposition \ref{valfun}}\\[1mm]
(i) Fix $x\in H$ and $t, \tau>0$ with $\tau<t$. For any $u\in {\cal U}_{[0,\tau]}(x)$, define
$$\tilde{u}(s)=\left\{ \begin{array}{ll}0 & \textrm{if } \ s\in [0,t-\tau] \\[1mm] u(s-t+\tau) & \textrm{if } \ s\in\, ]t-\tau,t]. \end{array}\right.$$
We have $\tilde{u} \in {\cal U}_{[0,t]}(x)$, since obviously
$$\int_0^t e^{(t-s)A}B\tilde{u}(s)\,\ud s = \int_{t-\tau}^t e^{(t-s)A}Bu(s-t+\tau)\,\ud s = \int_0^\tau e^{(\tau-\sigma)A}Bu(\sigma)\,\ud \sigma =x,$$
and moreover
$$\int_0^t \|\tilde{u}(s)\|_U^2 \, \ud s = \int_{t-\tau}^t \|u(s-t+\tau)\|_U^2 \, \ud s = \int_0^\tau \|u(\sigma)\|_U^2 \, \ud \sigma.$$
Now, for a fixed $\varepsilon>0$ we may select $u\in {\cal U}_{[0,\tau]}(x)$ such that
$$V(\tau, x) \le \frac12 \int_0^\tau \|u(\sigma)\|_U^2 \, \ud \sigma < V(\tau,x)+\varepsilon,$$
so that for the corresponding $\tilde{u}$ we get
$$V(t,x) \le \frac12 \int_0^t \|\tilde{u}(s)\|_U^2 \, \ud s = \frac12 \int_0^\tau \|u(\sigma)\|_U^2 \, \ud \sigma < V(\tau,x)+\varepsilon$$
and finally $V(t,x)\le V(\tau,x)$.\\[2mm]
(ii) Formula (\ref{valuef}) shows that $V$ is quadratic with respect to $x$.
Moreover (\ref{valuef}), rewritten in $H$, becomes
\begin{eqnarray*}
V(t,x) & = & \frac12 \langle Q_{\infty}^{1/2} Q_{t}^{-1/2}x,Q_{\infty}^{1/2} Q_{t}^{-1/2}x \rangle_H \\
& = & \frac12 \langle [Q_{\infty}^{1/2} Q_{t}^{-1/2}]^{*H} Q_{\infty}^{1/2} Q_{t}^{-1/2}x,x \rangle_{[R(Q_{t}^{1/2})]^{*H},R(Q_{t}^{1/2})} \quad \forall t>0, \ \ \forall x\in R(Q_{t}^{1/2}),
\end{eqnarray*}
and the claim is proved.\\[2mm]
(iii)-(a) Under Assumption \ref{NC} formula (\ref{forquaV2}) immediately follows from the fact that $R(Q_t^{1/2})=H$ for every $t\ge T_0$.
To prove (\ref{formaP2}) we take $x\in R(Q_t)$. Then, by (\ref{valuef}),
$$
V(t,x)= \frac12 \| Q_{t}^{-1/2}x \|_X^2
=\frac12 \langle Q_{t}^{-1/2}x, Q_{t}^{-1/2}x \rangle_X
= \frac12 \langle Q_{t}^{-1}x, x \rangle_X
$$
where in the last step we used Lemma \ref{relinH} (i).
Now passing to the inner product in $H$ and using Lemma \ref{relinH} (ii)
we get
$$
V(t,x)= \frac12 \langle [Q_{\infty}^{1/2} Q_{t}^{-1}]x, Q_{\infty}^{1/2}x\rangle_H
=\frac12 \langle[Q_{\infty} Q_{t}^{-1}]x, x\rangle_H \,,
$$
which is the claim.
Finally the estimate (\ref{glolim}) is an immediate consequence of
the monotonicity of $V(\cdot, x)$.\\[2mm]
(iii)-(b) First we show that if $t\in[T_0,\infty[\,$, then for each $\varepsilon>0$ and $R>0$ there is $\delta>0$ such that
\begin{equation}\label{eq:contV1}
V(\tau,x) < V(t,x) + \varepsilon \qquad \forall x\in H, \; \|x\|_H\le R, \quad \forall \tau\in [T_0\vee (t-\delta), t[\,.
\end{equation}
To this purpose, fix $\tau \in [T_0,t[\,$ and $x \in B_H(0,R)$, take $u\in {\cal U}_{[0,t]}(x)$ such that
$$\frac12 \int_0^t \|u(s)\|_U^2\, \ud s < V(t,x)+ \frac{\varepsilon}{2}\,,$$
and define
$$\overline{u}(s) = u(s+t-\tau), \qquad s\in [0,\tau].$$
Since
$$\int_0^\tau e^{(\tau-s)A}B\overline{u}(s)\,\ud s = \int_0^\tau e^{(\tau-s)A}Bu(s+t-\tau)\,\ud s = \int_{t-\tau}^t e^{(t-\sigma)A}Bu(\sigma) \,\ud \sigma= x - \int_0^{t-\tau}e^{(t-\sigma)A}Bu(\sigma)\,\ud \sigma,$$
we have $\overline{u}\in {\cal U}_{[0,\tau]}\left(x-\int_0^{t-\tau} e^{(t-\sigma)A}Bu(\sigma)\,\ud \sigma\right)$. Hence
$$V\left(\tau,x-\int_0^{t-\tau} e^{(t-\sigma)A}Bu(\sigma)\,\ud \sigma\right) \le \frac12 \int_0^\tau \|\overline{u}(s)\|_U^2 \,\ud s = \frac12 \int_{t-\tau}^t \|u(\sigma)\|_U^2 \ud \sigma \le \frac12 \int_0^t \|u(\sigma)\|_U^2\,\ud \sigma< V(t,x)+ \frac{\varepsilon}{2}\,.$$
On the other hand, by (\ref{forquaV2}) we have, for $x,y \in H$
\begin{eqnarray*}
V(\tau,x-y)-V(\tau,x) & = & \frac12 \langle P_V(\tau)(x-y),x-y\rangle_H - \frac12 \langle P_V(\tau)x,x \rangle_H \\
& = & - \frac12 \langle P_V(\tau)y,x-y\rangle_H - \frac12 \langle P_V(\tau)x,y \rangle_H;
\end{eqnarray*}
then, by (\ref{glolim}), for every $\eps>0$ and $R>0$ there is $\eta>0$ such that
$$\|y\|_H < \eta \qquad \Longrightarrow \qquad |V(\tau,x-y)-V(\tau,x)|< \frac{\varepsilon}{2} \qquad \forall \tau\ge T_0, \quad \forall x \in B_H(0,R).$$
Hence we get
$$V(\tau,x) <V\left(\tau,x-\int_0^{t-\tau} e^{(t-\sigma)A}Bu(\sigma)\,\ud \sigma\right)+ \frac{\varepsilon}{2}<V(t,x)+ \varepsilon$$
provided we are able to find $\delta>0$ such that
\begin{equation}\label{deltasigma}
\left\|\int_0^{t-\tau} e^{(t-\sigma)A}Bu(\sigma)\,\ud \sigma\right\|_H < \eta  \qquad \forall \tau\in [T_0\vee (t-\delta), t[\,.
\end{equation}
In order to check \myref{deltasigma}, we fix $z\in R(Q_\infty)$ with $\|z\|_H \le 1$. We can write, by Assumption \ref{NC}, Proposition \ref{Pr:B.1} and Lemma \ref{composQt},
\begin{eqnarray*}
\left|\left\langle \int_0^{t-\tau} e^{(t-\sigma)A}Bu(\sigma)\,\ud \sigma, z\right\rangle_H\right| & = & \left|\left\langle \int_0^{t-\tau} e^{(t-\sigma)A}Bu(\sigma)\,\ud \sigma, Q_\infty^{-1}z\right\rangle_X \right| \\
& = & \left|\left\langle \int_0^{t-\tau} e^{(t-\sigma-T_0)A}Bu(\sigma)\,\ud \sigma, e^{T_0 A^*}Q_\infty^{-1}z \right\rangle_X\right| \\
& \le & \int_0^{t-\tau} e^{(t-\sigma-T_0)A}Bu(\sigma)\|_X \,\ud \sigma \, \cdot \,\| e^{T_0 A^*}Q_\infty^{-1}z \|_X \\
& \le & \int_0^{t-\tau} e^{(t-\sigma-T_0)A}Bu(\sigma)\|_X\,\ud \sigma \, \cdot \,\|Q_{T_0}^{1/2}Q_\infty^{-1}z \|_X \\
& \le & c \int_0^{t-\tau} e^{-\omega(t-\sigma-T_0)}\|u(\sigma)\|_U\, \ud \sigma \, \cdot \,\|Q_\infty^{-1/2}z\|_X \\
& \le & c\,\sqrt{t-\tau}\, e^{-\omega\,\delta}\|u\|_{L^2(0,t;U)} \|z\|_H.
\end{eqnarray*}
Hence, using the density of $R(Q_\infty)$ in $H$ (see Lemma \ref{HH}),
$$\sup_{\|z\|_H \le 1} \left|\left\langle \int_{-t}^{-\tau} e^{-sA}Bu(s)\,\ud s, z\right\rangle_H \right| \le c\,\sqrt{t-\tau}\, \|u\|_{L^2(-t,0;U)},$$
so that
$$\left\|\int_{-t}^{-\tau} e^{-sA}Bu(s)\,\ud s \right\|_H \le c\,\sqrt{t-\tau}\, \|u\|_{L^2(-t,0;U)}.$$
Thus, to achieve (\ref{deltasigma}) it suffices to take $\delta>0$ such that $c\sqrt{\delta}\, \|u\|_{L^2(-t,0;U)}< \eta$. Hence we have proved (\ref{eq:contV1}), too.\\
Now fix $\eps >0$, $R>0$ and take $\delta$ such that (\ref{eq:contV1}) holds. For $(t,x), (\tau,x')\in [T_0,+\infty]\times B_H(0,R)$ with $|t -\tau|<\delta$ we have
\begin{eqnarray*}
|V(t,x)-V(\tau,x')| & \le & |V(t,x)-V(\tau,x)|+ |V(\tau,x)-V(\tau,x')| \\
& \le & \eps + \frac12 |\langle P_V(\tau)x,x \rangle_H - \frac12 \langle P_V(\tau)x',x' \rangle_H| \\
& \le & \eps + \frac12 |\langle P_V(\tau)(x-x'),x \rangle_H|+\frac12 |\langle P_V(\tau)x',x-x'\rangle_H| \\
& \le & \eps+ \|P_V(T_0\|_{\Lc(H)}R \|x-x'\|_H\,,
\end{eqnarray*}
and the first part of the claim easily follows.
To prove the continuity of the map $t \to P_V(t)$ we observe that, for $t,\tau\in [T_0,+\infty[$
$$
\|P_V(t)-P_V(\tau)\|_{\Lc(H)}=\sup_{\|x\|= 1} \langle P_V(t)-P_V(\tau)x,x \rangle_H
=2\sup_{\|x\|= 1} |V(t,x)-V(\tau,x)|;
$$
so the claim follows by (\ref{eq:contV1}).\\[2mm]
(iii)-(c) The limit in (\ref{Vinfty}) clearly exists and is finite by monotonicity and positivity of $V$.
To find this limit we consider first the case when $x\in R(Q_\infty)$. Then we have, by (\ref{formaP2}),
\begin{eqnarray*}
2V(t,x)-\|x\|_H^2 & = & \langle P_V(t)x-x,x \rangle_H = \langle Q_\infty Q_t^{-1}x-x,x\rangle_H \\
& = & \langle (Q_\infty -Q_t)Q_t^{-1}x,x \rangle_H =\langle(Q_\infty -Q_t)Q_t^{-1}x,Q_\infty^{-1}x \rangle_X\,.
\end{eqnarray*}
Since, for suitable $c>0$,
$$\|(Q_\infty -Q_t)z\|_X=\left\|\int_t^\infty e^{sA}BB^*e^{sA^*}z\,\ud s\right\|_X
\le c \, e^{- 2\omega t}\|z\|_X \quad \forall z\in X,$$
we obtain
$$2V(t,x)-\|x\|_H^2 \le c \, e^{- 2\omega t}\| Q_t^{-1}x\|_X \| Q_\infty^{-1}x\|_X.$$
But
\begin{eqnarray*}
\| Q_t^{-1}x\|_X & = & \|Q_\infty^{1/2} Q_t^{-1}x\|_H
=\|Q_\infty^{1/2} Q_t^{-1/2} Q_t^{-1/2}Q_\infty^{1/2}Q_\infty^{-1/2} x\|_H \\[1mm]
& \le & \|Q_\infty^{1/2} Q_t^{-1/2}\|_{\mathcal{L}(H)} \| Q_t^{-1/2}Q_\infty^{1/2}\|_{\mathcal{L}(H)}\|Q_\infty^{-1/2}x\|_H,
\end{eqnarray*}
so that by Lemma \ref{composQt} we get
$$\lim_{t\to \infty} V(t,x) =\frac12\|x\|_H^2 \qquad \forall x\in R(Q_\infty).$$
By selfadjointness of $P_V(t)$ and polarization, we also have
$$\lim_{t\to \infty} \langle P_V(t)x,y \rangle_H = \langle x,y \rangle_H \quad \forall x,y \in R(Q_\infty).$$
Since, by (\ref{glolim}), $P_V(t) - I$ is uniformly bounded, by density (Lemma \ref{HH})
we deduce that
$$\lim_{t\to \infty} \langle P_V(t)x,y \rangle_H = \langle x,y \rangle_H \quad \forall x\in R(Q_\infty), \quad \forall y \in H,$$
and using again that $P_V(t)$ is selfadjoint we get
$$\lim_{t\to \infty} \langle x,P_V(t)y \rangle_H = \langle x,y \rangle_H  \quad \forall x\in R(Q_\infty), \quad \forall y \in H.$$
With the same argument we then obtain
$$\lim_{t\to \infty} \langle x,P_V(t)y \rangle_H = \langle x,y \rangle_H \quad \forall x,y\in H,$$
and the result follows.\hfill \qed

\end{document}